\renewcommand{\@begintheorem}[2]{\it \trivlist
      \item[\hskip \labelsep{\bf #1\ #2{\rm :}}]}
\renewcommand{\@opargbegintheorem}[3]{\it \trivlist
      \item[\hskip \labelsep{\bf #1\ #2\ {\rm (#3)\/:}}]}
\def\@sect#1#2#3#4#5#6[#7]#8{\ifnum #2>\c@secnumdepth
     \def\@svsec{}\else
     \refstepcounter{#1}\edef\@svsec{\csname the#1\endcsname{.}\hskip 1em }\fi
     \@tempskipa #5\relax
      \ifdim \@tempskipa>\z@
        \begingroup #6\relax
          \@hangfrom{\hskip #3\relax\@svsec}{\interlinepenalty \@M #8\par}
        \endgroup
       \csname #1mark\endcsname{#7}\addcontentsline
         {toc}{#1}{\ifnum #2>\c@secnumdepth \else
                      \protect\numberline{\csname the#1\endcsname}\fi
                    #7}\else
        \def\@svsechd{#6\hskip #3\@svsec #8\csname #1mark\endcsname
                      {#7}\addcontentsline
                           {toc}{#1}{\ifnum #2>\c@secnumdepth \else
                             \protect\numberline{\csname the#1\endcsname}\fi
                       #7}}\fi
     \@xsect{#5}}
\newcommand{\Delete}[1]{}
\theoremstyle{plain}
\newtheorem{Thm}{Theorem}[section]
\newtheorem{Lem}[Thm]{Lemma}
\newtheorem{Prop}[Thm]{Proposition}
\newtheorem{Cor}[Thm]{Corollary}
\newtheorem{Rem}[Thm]{Remark}
\newtheorem*{Defi}{Definition}
\newcommand{\cC}{\ensuremath{\mathcal{C}}}
\newcommand{\cW}{\ensuremath{\mathcal{W}}}
\newcommand{\ck}{\star}
\title{The competition number of a graph in which
any two holes share at most one edge}
\author{
\begin{tabular}{c}
{\sc Jung Yeun LEE}\\
[1ex]
National Institute for Mathematical Sciences \\
Daejeon 305-390, Korea\\
\\
{\sc Suh-Ryung KIM}
\thanks{This research was supported by Basic Science Research Program 
through the National Research Foundation of Korea (NRF) funded 
by the Ministry of Education, Science and Technology (700-20100058).}\\
[1ex]
Department of Mathematics Education \\
Seoul National University, Seoul 151-742, Korea\\
\\
{\sc Yoshio SANO}
\thanks{
This work was supported by Priority Research Centers Program
through the National Research Foundation of Korea (NRF) funded by
the Ministry of Education, Science and Technology 
(MEST) (No. 2010-0029638).}
\thanks{Corresponding author. {\it e-mail address:} 
{\tt ysano@postech.ac.kr}}\\
[1ex]
Pohang Mathematics Institute \\
POSTECH, Pohang 790-784, Korea 
\end{tabular} }
\date{February 2011}
\begin{document}

\maketitle

\begin{abstract}
The competition graph of a digraph $D$ is a (simple undirected) graph
which has the same vertex set as $D$
and has an edge between $x$ and $y$ if and only if
there exists a vertex $v$ in $D$ such that
$(x,v)$ and $(y,v)$ are arcs of $D$.
For any graph $G$, $G$ together with sufficiently many isolated vertices
is the competition graph of some acyclic digraph.
The competition number $k(G)$ of $G$
is the smallest number of such isolated vertices.
In general, it is hard to compute the competition number $k(G)$
for a graph $G$
and it has been one of important research problems in the study of
competition graphs to characterize a graph by its competition number.

A hole of a graph is a cycle of length
at least $4$ as an induced subgraph.
It holds that the competition number of a graph
cannot exceed one plus the number of its holes
if $G$ satisfies a certain condition.
In this paper, we show that the competition number of
a graph with exactly $h$ holes any two of which share at most one edge
is at most $h+1$,
which generalizes the existing results on this subject.
\end{abstract}

\noindent
{\bf Keywords:}
competition graph; competition number; hole

\tableofcontents

%%%%%%%%%%%%%%%%%%%%%%%%%%%%%%%%%%%%%%%%%%%%%%%%%%%%%%%%%%%%%%%%%%%%%%
\section{Introduction and Preliminaries}
%%%%%%%%%%%%%%%%%%%%%%%%%%%%%%%%%%%%%%%%%%%%%%%%%%%%%%%%%%%%%%%%%%%%%%%
\subsection{Introduction}
%%%%%%%%%%%%%%%%%%%%%%%%%%%%%%%%%%%%%%%%%%%%%%%%%%%%%%%%%%%%%%%%%%%%%%%%%

Let $D$ be an acyclic digraph.
The {\it competition graph} of $D$, denoted by $C(D)$,
is the (simple undirected) graph which
has the same vertex set as $D$
and has an edge between two distinct vertices $x$ and $y$ if and only if
there exists a vertex $v$ in $D$ such that $(x,v)$ and $(y,v)$
are arcs of $D$.
For any graph $G$, $G$ together with sufficiently many isolated vertices
is the competition graph of an acyclic digraph.
From this observation, Roberts \cite{cn}
defined the {\em competition number} $k(G)$ of
a graph $G$ to be the smallest number $k$ such that $G$ together with
$k$ isolated vertices is the competition graph of an acyclic digraph.

The notion of competition graph was introduced by Cohen~\cite{co} as a
means of determining the smallest dimension of ecological phase space.
Since then, various variations have been defined and studied by many
authors
(see \cite{kimsu,lu} for surveys).
Besides an application to ecology, the
concept of competition graph
can be applied to a variety of fields, as summarized
in \cite{RayRob}.
Roberts~\cite{cn} observed that characterization of competition
graph is equivalent to computation of competition number.
It does not seem to be easy in general to compute $k(G)$ for a given graph
$G$, as Opsut~\cite{op} showed that the computation of the
competition number of a graph is an NP-hard problem (see
\cite{kimsu,kr}  for graphs whose competition numbers are known).

It has been one of important research problems in the study of
competition graphs to characterize a graph by its competition number.
From this point of view,
we study the relationship between the competition number
and the number of holes of a graph. 

A cycle in a graph is called an {\it induced cycle} 
(also called a {\it chordless cycle} or a {\it simple cycle}) 
if it is an induced subgraph of the graph. 
A {\it hole} in a graph is 
an induced cycle of length at least $4$ in the graph. 
We denote the number of holes in a graph $G$ by $h(G)$. 
A graph without holes is called a {\em chordal graph}. 

The competition number of a graph with a few holes has been studied:

\begin{Thm}[Roberts \cite{cn}]\label{roberts}
Let $G$ be a chordal graph.
Then the competition  number of $G$ is at most $1$.
\end{Thm}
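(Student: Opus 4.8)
The plan is to exhibit an acyclic digraph $D$ on the vertex set $V(G)\cup\{a\}$, where $a$ is a single new vertex, whose competition graph is exactly $G$ together with the isolated vertex $a$; this shows $k(G)\le 1$. The engine of the construction is a \emph{perfect elimination ordering}, which exists precisely because $G$ is chordal: an ordering $v_1,\dots,v_n$ of $V(G)$ such that for every $i$ the later neighbourhood $R_i:=\{v_j : j>i,\ v_iv_j\in E(G)\}$ induces a clique of $G$. I would begin by invoking this characterization and noting the two consequences used repeatedly: each set $C_i:=\{v_i\}\cup R_i$ is a clique, and every edge $v_iv_j$ of $G$ with $i<j$ lies in the single clique $C_i$ indexed by the smaller endpoint.

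Next I would build the arcs so that each clique $C_i$ becomes the predator set of one private prey. Concretely, for each $i\ge 2$ I add an arc from every vertex of $C_i$ to $v_{i-1}$, and for $i=1$ I add an arc from every vertex of $C_1$ to the new vertex $a$. Sending the clique $C_i$ to $v_{i-1}$ (and the first clique to $a$) serves two purposes. First, the prey are pairwise distinct, so no two different cliques share a common out-neighbour. Second, every arc runs from a vertex of index $\ge i$ to the vertex of index $i-1$ (or to $a$), so all arcs decrease the index; ordering the vertices as $v_n,v_{n-1},\dots,v_1,a$ is then a topological order and $D$ is acyclic.

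The heart of the verification is to check that $C(D)$ is exactly $G\cup\{a\}$. For two vertices $x,y\in V(G)$, competition in $C(D)$ means a common out-neighbour in $D$; by construction a common out-neighbour can only be some prey $v_{i-1}$ or $a$, and sharing it forces $x,y\in C_i$ for a single $i$. Since $C_i$ is a clique this yields $xy\in E(G)$, so no spurious edges appear; conversely every edge lies in some $C_i$ whose vertices all point to the same prey, so every edge of $G$ is realized. Finally $a$ is a sink, hence isolated in $C(D)$. I expect the only genuinely delicate point to be this ``no merging of cliques'' step: one must be sure that assigning each clique its own prey, rather than reusing prey, is what prevents edges between non-adjacent vertices, and that this is simultaneously compatible with acyclicity---both guaranteed by the descending-index assignment above. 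An alternative I would keep in reserve is an induction on $|V(G)|$ that deletes a simplicial vertex; the subtle step there is recycling the same single isolated vertex when the simplicial vertex is reinserted, which is exactly what the explicit construction sidesteps.
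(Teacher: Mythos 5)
Your proof is correct: the perfect-elimination-ordering construction, with each clique $C_i=\{v_i\}\cup R_i$ fed to the private prey $v_{i-1}$ (and $C_1$ to the one added vertex), is exactly the classical argument of Roberts, and all the key checks (acyclicity via the descending-index order, no merging of cliques since each prey has in-neighbourhood equal to a single clique) are in place. The paper itself states this theorem as a cited result and gives no proof, so there is nothing to diverge from; your argument is the standard one.
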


\begin{Thm}[Cho and Kim \cite{ck}]\label{comptwo}
Let $G$ be a graph with exactly one hole.
Then the competition number of $G$ is at most $2$.
\end{Thm}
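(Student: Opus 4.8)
The plan is to establish $k(G)\le h+1$ through the standard edge-clique-cover construction of an acyclic digraph. Fix a linear ordering $v_1\prec\cdots\prec v_n$ of $V(G)$ and, for each $i$, write $S_i:=N_G(v_i)\cap\{v_{i+1},\dots,v_n\}$ for the set of later neighbours of $v_i$. Take a minimum clique cover $Q_{i,1},\dots,Q_{i,\theta(G[S_i])}$ of the induced subgraph $G[S_i]$; then each $\{v_i\}\cup Q_{i,\ell}$ is a clique of $G$, and together these cliques cover every edge of $G$. Build $D$ so that each such clique is exactly the in-neighbour set of a distinct ``prey'' vertex preceding all of its members in $\prec$; since the arcs then run from $\prec$-larger to $\prec$-smaller vertices, $D$ is acyclic, each prey's in-neighbourhood is a clique of $G$ (so no spurious edges appear), and each fresh prey vertex has no out-arcs (so it is isolated in $C(D)$). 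A Hall-type matching argument (equivalently, earliest-deadline scheduling of the cliques onto prey) shows that the number of fresh isolated vertices one must adjoin is
\[
1+\max_{1\le t\le n}\sum_{i=1}^{t}\bigl(\theta(G[S_i])-1\bigr)\ \le\ 1+\sum_{i=1}^{n}\bigl(\theta(G[S_i])-1\bigr)^{+}.
\]
When $G$ is chordal and $\prec$ is a perfect elimination ordering, every $S_i$ is a clique and the sum vanishes, recovering Theorem~\ref{roberts}. Thus it suffices to find an ordering with $\sum_i\bigl(\theta(G[S_i])-1\bigr)^{+}\le h$.

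The holes are exactly what obstruct this. If $v_i$ is the $\prec$-smallest vertex of a hole $H$, its two neighbours along $H$ both lie in $S_i$ and are non-adjacent (as $H$ is induced of length $\ge 4$), forcing $\theta(G[S_i])\ge 2$; so every hole contributes at least one unit of excess at its $\prec$-minimum. The aim is to choose $\prec$ so that this is the \emph{only} source of excess, that is, so that
\[
\theta(G[S_i])\ \le\ 1+r_i\qquad(1\le i\le n),
\]
where $r_i$ is the number of holes whose $\prec$-smallest vertex is $v_i$. Summing then gives $\sum_i\bigl(\theta(G[S_i])-1\bigr)^{+}\le\sum_i r_i=h$, which is precisely what is needed; for a single hole this charges once and reproduces Theorem~\ref{comptwo}.

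To produce such an ordering I would triangulate: choose a chordal supergraph $G^{\ast}\supseteq G$ obtained by adding chords inside the holes, and let $\prec$ be a perfect elimination ordering of $G^{\ast}$. Each later-neighbourhood $S_i^{\ast}$ in $G^{\ast}$ is then a clique, so the non-edges of $G[S_i]$ are exactly the added chords lying in $S_i$, and hence $\theta(G[S_i])=\chi\bigl(\overline{G[S_i]}\bigr)$ is the chromatic number of the graph formed by those chords. Everything reduces to the combinatorial claim that, for a suitably chosen triangulation and elimination order, the chords falling inside any single later-neighbourhood can be properly coloured with at most $1+r_i$ colours. This is where the hypothesis that any two holes share at most one edge enters: it keeps the chords created for different holes from piling up inside one neighbourhood, so each hole is charged to one unit of excess at its own minimum vertex and the charges of distinct holes stay separated.

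The main obstacle is this last step — the interaction of overlapping holes. When two holes meet in a common vertex or a common edge, their triangulations interact, and one must show that the added chords and the elimination order can be arranged coherently so that the chord-graph inside every later-neighbourhood (a) involves only chords attributable to holes anchored at that vertex and (b) has chromatic number at most $1+r_i$; concretely, one must rule out, under the sharing hypothesis, the configurations that would drive $\theta(G[S_i])$ above $1+r_i$, such as more pairwise $G$-non-adjacent later neighbours of a single vertex than there are holes anchored there. I expect the bulk of the work to be a careful analysis of how two holes sharing at most one edge contribute to a common neighbourhood. A plausible alternative is induction on $h$: isolate one hole, pay for it with a single fresh vertex, and apply the $(h-1)$-hole bound to the remainder, again invoking the sharing hypothesis to ensure that resolving one hole creates no new holes and leaves the others intact.
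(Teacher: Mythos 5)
Your proposal is a plan rather than a proof, and the gap you yourself flag at the end is precisely the entire content of the theorem. The reduction to ``find an ordering $\prec$ with $\theta(G[S_i])\le 1+r_i$'' is a reasonable framing, but that claim is not established, and it fails for natural orderings even with a single hole: if $C$ is the unique hole and $x\in X_C$ (a vertex adjacent to every vertex of $C$) is placed before all of $V(C)$, then $S_x\supseteq V(C)$, so $\theta(G[S_x])\ge\theta(C)\ge 2$ while $r_x=0$, since the hole is anchored at its own $\prec$-minimum vertex and not at $x$. So the charging scheme as stated is broken unless you prove that a triangulation and elimination order avoiding all such configurations exists, and nothing in the proposal does that. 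Note also that for $h=1$ the ``any two holes share at most one edge'' hypothesis is vacuous, so it cannot be what saves the argument. A secondary issue: the displayed bound $1+\max_t\sum_{i\le t}(\theta(G[S_i])-1)$ on the number of needed prey is asserted via an unspecified ``Hall-type'' argument; you would need to verify that cliques anchored at $v_i$ can always be assigned to earlier vertices or fresh prey consistently, which again requires the ordering to be chosen carefully.

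For comparison: the paper does not prove this statement at all --- it is quoted from Cho and Kim \cite{ck} --- but the method actually used there (and echoed throughout this paper, e.g.\ Lemma~\ref{vertexcutone} and the proof of Theorem~\ref{thm:mainCaseA}) is quite different from yours. One shows that either there is no $C$-avoiding $(v_i,v_{i+1})$-path for some edge $v_iv_{i+1}$ of the hole, in which case deleting that edge yields a chordal graph and Theorem~\ref{chordalpart} applies with $k=1$, or else $X_C\cup\{v_i,v_{i+1}\}$ is a clique vertex cut, along which the graph is decomposed into a piece containing the hole and a chordal piece, and the two acyclic digraphs are glued. If you want to salvage your approach, the honest route is your final alternative (induction on $h$ by isolating one hole), but then you must actually carry out the decomposition step, which is where all the structural lemmas about $X_C$ and $C$-avoiding paths are needed.
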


\begin{Thm}[Lee, Kim, Kim, and Sano \cite{twoholes}, 
Li and Chang \cite{twoholes2}]\label{compthree}
Let $G$ be a graph with exactly two holes.
Then the competition number of $G$ is at most $3$.
\end{Thm}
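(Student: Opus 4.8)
The plan is to exhibit $G$, together with three additional vertices, as the competition graph of an acyclic digraph, which by definition gives $k(G)\le 3$. I would use the standard construction principle: to prove $k(G)\le 3$ it suffices to produce an edge clique cover $\cF$ of $G$ (a family of cliques covering every edge of $G$), a linear order $v_1\prec\cdots\prec v_n$ on $V(G)$, and three new vertices $z_1,z_2,z_3$, such that the members of $\cF$ can be assigned injectively to the vertices of $V(G)\cup\{z_1,z_2,z_3\}$ with every clique assigned to a vertex that strictly succeeds all of its members in the order, the $z_i$ being used only as ``last resort'' preys. Placing, for each $C\in\cF$, an arc from every vertex of $C$ to the vertex to which $C$ is assigned produces an acyclic digraph $D$ whose competition graph is precisely $G$ together with the three isolated vertices $z_1,z_2,z_3$. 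Theorem~\ref{roberts} is the chordal instance of this scheme, where a perfect elimination ordering yields a cover and an assignment needing only one new vertex; my aim is to show that each of the two holes of $G$ costs at most one further new vertex.

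First I would strip $G$ down to the part controlled by its holes. A simplicial vertex (one whose neighborhood is a clique) lies on no hole and can be placed at the front of the order and handled exactly as in the chordal case without consuming any budgeted prey; deleting such vertices repeatedly, and similarly absorbing the chordal pieces hanging off the holes, I may assume the relevant structure of $G$ is carried by its two holes $C^1$ and $C^2$. I would then determine how $C^1$ and $C^2$ can meet. Because $G$ has \emph{exactly} two holes, the overlap is severely restricted: two holes sharing a common path of length at least $2$, or sharing two non-adjacent vertices, always produce a third chordless cycle (one traces the ``outer'' boundary and checks it is induced), which is forbidden. Hence $C^1$ and $C^2$ either (i) are vertex-disjoint, (ii) meet in exactly one vertex, or (iii) meet in exactly one edge, and I would verify that these are the only admissible patterns.

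Next I would build the order and the cover in each case. For a single hole $u_1u_2\cdots u_mu_1$ the mechanism behind Theorem~\ref{comptwo} is to orient the construction \emph{around} the cycle: the edges $u_iu_{i+1}$ and the local chordal structure are covered by cliques assigned to successive vertices as one walks along the hole, until the final edge of the cycle is reached, whose covering clique has no available successor inside $G$ and must be assigned to a fresh vertex. Carrying this out once per hole consumes one new vertex per hole, while the remaining chordal portion of $G$ absorbs the third new vertex through a perfect-elimination-type assignment. In case (i) the two walks are independent and the count is immediate. In cases (ii) and (iii) I would choose the global order so that the shared vertex, respectively shared edge, is traversed only once, and route the clique covering the shared edge into the budget of a single hole, so the overlap is charged only once.

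The crux, and the only genuinely delicate point, is this overlap. When $C^1$ and $C^2$ share a vertex or an edge, the two ``around-the-cycle'' orientations that each work in isolation may be incompatible: an order letting one hole close with a single fresh prey can force the other to close with two, and the clique covering the shared edge must respect both walks at once while never joining a non-adjacent pair (which would create a spurious competition edge, i.e.\ a chord absent from $G$). The heart of the argument is to show that in each admissible pattern the two orientations can be synchronized---by choosing a common basepoint on the shared part and a consistent direction of traversal---so that the number of cliques left without a successor inside $G$ is at most three, and to check that every prescribed in-neighborhood is a genuine clique of $G$. Once the order is fixed, acyclicity is automatic and the remaining verifications are routine, so the whole difficulty is concentrated in this combinatorial synchronization across the shared region of the two holes.
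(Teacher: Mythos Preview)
The paper does not contain its own proof of this theorem; it is quoted from \cite{twoholes} and \cite{twoholes2}, and the paper's main result (Theorem~\ref{thm:main}) explicitly does \emph{not} subsume it, precisely because two holes may share more than one edge. So there is no paper proof to compare against---but your sketch has a real gap that would need to be closed.

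Your case analysis is incomplete. You assert that if two holes share a common path of length at least~$2$, or share two non-adjacent vertices, then a third hole must appear; hence the only configurations are (i) disjoint, (ii) one common vertex, (iii) one common edge. This is false. Take a $5$-cycle $v_1v_2v_3v_4v_5v_1$ and add a vertex $u$ adjacent to $v_1,v_2,v_3$. The original $5$-cycle is a hole, and $uv_1v_5v_4v_3u$ is a second hole; a short check shows there are no others. These two holes share the path $v_1v_5v_4v_3$ of length~$3$ (three common edges) and share the non-adjacent vertices $v_1,v_3$. This is exactly the kind of graph sitting in region~17 of Figure~\ref{fig:relationship} (two holes, condition~(E1) violated), and it is the reason the paper singles out Theorem~\ref{compthree} as \emph{not} covered by its main theorem. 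More generally, the symmetric difference of the edge sets of the two holes is a disjoint union of cycles, but those cycles need not be induced: chords coming from the vertices on the shared path can cut them into triangles, so no third hole is forced.

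Consequently your ``synchronize two walks around the holes'' plan, while adequate for cases (i)--(iii), does not address the genuinely hard configuration where the two holes overlap along a long path. In that situation the shared segment carries several edges that belong to both holes simultaneously, and the around-the-cycle assignment for one hole interacts with that for the other along the entire segment, not just at a single vertex or edge. The published proofs handle this by a more careful structural analysis of the overlap (in \cite{twoholes} via vertex cuts of the form $X_C\cup\{v_i,v_{i+1}\}$, cf.\ Theorem~\ref{chordalpart} and Lemma~\ref{vertexcutone}), not by the trichotomy you propose.
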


Recently, it has been shown that 
the competition number of a graph with exactly $h$ holes 
is at most $h+1$ under several assumptions. 

\begin{Defi}[Li and Chang \cite{indep}]
{\rm
A hole $C$ of a graph $G$ is called {\it independent}
if, for any hole $C'$ of $G$,
\begin{itemize}
\item{}
$|V(C) \cap V(C')| \leq 2$.
\item{}
If $|V(C) \cap V(C')| = 2$,
then $|E(C) \cap E(C')|=1$ and $|V(C)| \geq 5$.
\qed
\end{itemize}
}
\end{Defi}

\begin{Thm}[\cite{indep}]\label{thm:indep}
Let $G$ be a graph with exactly $h$ holes
satisfying the following property {\rm (LC)}:
\begin{itemize}
\item[{\rm (LC)}]
All the holes of $G$ are independent.
\end{itemize}
Then the competition number of $G$ is at most $h+1$.
\end{Thm}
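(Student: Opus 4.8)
The plan is to reduce the bound on $k(G)$ to a purely combinatorial statement about an edge clique cover equipped with a vertex ordering, and then to charge the ``defects'' of that cover to the individual holes of $G$.

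First I would recall the standard reformulation used (implicitly) by Roberts and in the cited hole papers: to prove $k(G)\le m$ it suffices to produce an ordering $v_1,\dots,v_n$ of $V(G)$, a family of cliques of $G$ covering every edge, and an injective assignment of a \emph{prey} to each clique, where the prey of a clique $K$ is either a vertex $v_j$ preceding every vertex of $K$, or one of $m$ new vertices, with at most $m$ cliques assigned a new prey. Indeed, orienting every member of $K$ toward its prey yields an acyclic digraph (all arcs decrease in the order) whose competition graph is exactly $G$ together with the new vertices as isolated vertices: the blocks being genuine cliques forbids spurious edges, covering all edges of $G$ forbids missing ones, and the new vertices, having out-degree $0$, stay isolated. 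This is precisely the mechanism behind $k(\text{chordal})\le 1$.

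Second, I would generate such data from the ordering itself. For an ordering $v_1,\dots,v_n$ write $R_i := N(v_i)\cap\{v_{i+1},\dots,v_n\}$ for the set of later neighbours of $v_i$, and partition $R_i$ into $\theta_i$ cliques; then $\{v_i\}$ together with each block is a clique, and over all $i$ these cover every edge (the edge $v_iv_j$ with $i<j$ lies in the block of $R_i$ containing $v_j$). Each such clique needs a prey strictly preceding $v_i$, so an easy interval-scheduling (Hall) argument shows the number of new vertices needed is $\max_i\bigl(\sum_{i'\le i}\theta_{i'}-(i-1)\bigr)=1+\max_i\sum_{i'\le i}(\theta_{i'}-1)$. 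Since a prefix sum is at most the sum of the positive parts, it suffices to find an ordering with $\sum_{i}(\theta_i-1)^{+}\le h$; then $k(G)\le h+1$.

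Third, I would charge each unit of defect to a hole. Every hole $C$ has a unique $\prec$-minimum vertex $v_i$, and its two neighbours $x,y$ on $C$ lie in $R_i$ and are non-adjacent (a hole is chordless), so $C$ witnesses a non-edge of $G[R_i]$, i.e.\ an edge of the complement $\overline{G[R_i]}$. The crucial point is that distinct holes minimised at $v_i$ witness \emph{distinct} non-edges: if two holes produced the same pair $\{x,y\}$ they would share both edges $v_ix$ and $v_iy$, contradicting the hypothesis that two holes share at most one edge. Choosing the ordering so that the non-edges of each $G[R_i]$ are exactly these hole-witnessed pairs, I would properly colour $\overline{G[R_i]}$ with $\theta_i\le 1+(\text{number of holes minimised at }v_i)$ colours; summing over $i$ and using that each hole is minimised at exactly one vertex gives $\sum_i(\theta_i-1)^{+}\le h$.

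The main obstacle is precisely the italicised clause of the third step: arranging the ordering so that every non-edge inside a later-neighbourhood $R_i$ is accounted for by a hole minimised at $v_i$, and so that the resulting complement neighbourhood can be properly coloured within the per-vertex budget. The shared-edge hypothesis is needed in two ways here: to make the charging injective (done above) and to prevent the complement neighbourhoods from containing large cliques that would force colours beyond the hole count. I expect the bulk of the work, and the place where ``any two holes share at most one edge'' is genuinely stronger than chordality, to lie in establishing this local colouring bound and in selecting an ordering (for instance by peeling off simplicial vertices while they exist and breaking ties through the hole structure otherwise) for which it holds simultaneously at every vertex.
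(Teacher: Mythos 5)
This theorem is not proved in the paper at all: it is quoted from Li and Chang \cite{indep}, and the present paper only uses it (and later subsumes it via Theorem \ref{thm:main}, whose proof goes through a structural classification of holes, vertex cuts of the form $X_C\cup\{v_i,v_{i+1}\}$, edge deletion/addition operations, and induction on $h$). So there is no in-paper proof to match your argument against; your proposal has to stand on its own, and as written it does not.

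Your Steps 1 and 2 are correct and standard: the prey-assignment reformulation, the cliques $\{v_i\}\cup B$ over blocks $B$ of a clique partition of $R_i$, and the Hall-type count showing that $\sum_i(\theta_i-1)^{+}\le h$ implies $k(G)\le h+1$ are all fine. The genuine gap is Step 3, and you have essentially flagged it yourself: the entire content of the theorem is compressed into the unproved assertion that one can choose an ordering for which \emph{every} non-edge of \emph{every} $G[R_i]$ is witnessed by a hole whose $\prec$-minimum is $v_i$ (or at least for which $\theta_i-1$ is bounded by the number of holes minimized at $v_i$). This is exactly the part that generalizes the existence of a perfect elimination ordering for chordal graphs, and nothing in your write-up constructs such an ordering or proves one exists; ``peeling off simplicial vertices while they exist and breaking ties through the hole structure otherwise'' is a heuristic, not an argument. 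Note also that the injectivity of your charging map only gives the inequality in the useless direction (number of holes minimized at $v_i$ is at most the number of non-edges of $G[R_i]$); to bound $\theta_i$ you need the reverse containment, i.e.\ that \emph{no other} non-edges occur in $R_i$, and that is precisely where the structural consequences of (LC) (no $K_{2,3}$, behavior of $X_C$, non-existence of $C$-avoiding paths between non-adjacent hole vertices, etc.) would have to be developed and used. Until that ordering lemma is stated and proved, the proposal is a plausible plan rather than a proof.
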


\begin{Thm}[Kamibeppu \cite{Kamibeppu}]
Let $G$ be a graph with exactly $h$ holes
satisfying the following property {\rm (K)}:
\begin{itemize}
\item[{\rm (K)}]
For any hole $C$ of $G$, there exists an edge $e_C$ of the hole $C$
such that the edge $e_C$ is not contained in any other induced cycle of $G$.
\end{itemize}
Then the competition number of $G$ is at most $h+1$.
\end{Thm}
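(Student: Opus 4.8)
The plan is to argue by induction on the number $h=h(G)$ of holes, peeling off one hole at a time by deleting its private edge.

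For the inductive mechanism I would first isolate a general and elementary reduction, valid for every graph and every edge $e=\{a,b\}$: one always has $k(G)\le k(G-e)+1$. Indeed, take an acyclic digraph $D^{-}$ realizing $G-e$ together with $k(G-e)$ isolated vertices, adjoin one new vertex $z$, and add the two arcs $(a,z)$ and $(b,z)$. The vertex $z$ is a sink, hence isolated in the competition graph, and the digraph stays acyclic; the only new common-prey relation is that of $a$ and $b$ at $z$, so the competition graph gains exactly the edge $e$ and nothing else. Thus $C(D)$ is $G$ together with $k(G-e)+1$ isolated vertices, giving the inequality. The base case $h=0$ is Roberts' theorem (Theorem~\ref{roberts}), which gives $k(G)\le 1=h+1$ for chordal $G$.

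For the inductive step, let $C$ be a hole of $G$ and let $e=e_C=\{a,b\}$ be a private edge furnished by (K), that is, an edge of $C$ lying in no induced cycle of $G$ other than $C$. I claim that $G-e$ again satisfies (K) and has exactly $h-1$ holes; granting this, the inductive hypothesis yields $k(G-e)\le (h-1)+1=h$, and the reduction gives $k(G)\le k(G-e)+1\le h+1$. That the hole $C$ is destroyed while every other hole of $G$ survives in $G-e$ is immediate, since deleting an edge cannot create a chord and $e$ lies in no hole but $C$; moreover deleting an edge creates no new triangle, so (K) is inherited by every surviving hole once we know that $G-e$ has no new induced cycles at all.

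The heart of the argument — and the step I expect to be the main obstacle — is therefore to show that deleting the private edge $e$ creates no new hole. I would argue by contradiction: suppose $Q$ is an induced cycle of $G-e$ that fails to be induced in $G$. Since the edges of $Q$ lie in $G-e$, the vertices $a$ and $b$ are not consecutive on $Q$; as $Q$ is non-induced in $G$ only because of the reinstated edge $e$, both $a$ and $b$ must lie on $Q$ in non-consecutive positions, so $e$ is a chord of $Q$ in $G$. This chord splits $Q$ into two cycles $Q_{1}$ and $Q_{2}$, each containing $e$ and each of length at least three. Because $Q$ is induced in $G-e$, its vertex set carries no edges of $G$ beyond the edges of $Q$ and the edge $e$; consequently neither $Q_{1}$ nor $Q_{2}$ has a chord in $G$, so both are induced cycles of $G$ through $e$. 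Since $Q_{1}\neq Q_{2}$, this produces two distinct induced cycles containing $e$, contradicting the defining property of the private edge. Hence no new hole appears, $G-e$ inherits (K), its holes are exactly the $h-1$ holes of $G$ other than $C$, and the induction closes. The only remaining checks — that the reduction introduces no spurious adjacency and that acyclicity is preserved — are then routine.
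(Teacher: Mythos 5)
Your proof is correct, and it takes a genuinely different route from the paper. The paper does not prove this statement directly at all: it is quoted as a known result of Kamibeppu, and within the paper it is recovered by showing (in Section~1.2, via Lemmas~\ref{lem:triorhole} and~\ref{lem:sym-diff}) that condition (K) implies condition (E1), and then invoking the much more general main result, Theorem~\ref{thm:main}, whose proof occupies the bulk of the paper (structure theory of holes, vertex cuts, prisms, $K_{2,2,2}$'s, and a three-case induction). You instead give a short, self-contained induction on $h$: the elementary reduction $k(G)\le k(G-e)+1$ (add one new sink $z$ with in-neighborhood $\{a,b\}$), plus the key combinatorial fact that deleting the private edge $e_C$ destroys $C$, preserves every other hole, and creates no new induced cycle. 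Your argument for that last point is sound: a cycle $Q$ induced in $G-e$ but not in $G$ must have $e$ as a chord, the two arcs of $Q$ between $a$ and $b$ together with $e$ form two distinct induced cycles of $G$ through $e$ (no chords are available since $G[V(Q)]=Q+e$), and this contradicts the privacy of $e$; the inheritance of (K) by $G-e$ then follows because every induced cycle of $G-e$ is an induced cycle of $G$. What your approach buys is brevity and transparency for this particular theorem; what the paper's approach buys is that the same machinery simultaneously covers the strictly larger class (E1), of which (K) is shown to be a special case. One small presentational point: when you conclude the contradiction, it is worth saying explicitly that at least one of $Q_1,Q_2$ differs from $C$ because their vertex sets are distinct proper subsets of $V(Q)$, so they cannot both equal $C$.
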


\begin{Thm}[Kim, Lee, and Sano \cite{LKKS}]\label{thm:hed}
Let $G$ be a graph with exactly $h$ holes
satisfying the following property {\rm (E0)}:
\begin{itemize}
\item[{\rm (E0)}]
Any two distinct holes of $G$ are mutually edge disjoint.
\end{itemize}
Then the competition number of $G$ is at most $h+1$.
\end{Thm}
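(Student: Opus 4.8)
The plan is to prove the bound constructively, by exhibiting for such a graph $G$ an acyclic digraph $D$ whose competition graph is $G$ together with $h+1$ isolated vertices; this witnesses $k(G)\le h+1$ directly and extends the edge-disjoint case of Theorem~\ref{thm:hed}. First I would reduce to the case that $G$ is connected, since it suffices to treat each component separately: the $h$ holes are partitioned among the components, each hole lies in a single component, and the general bound follows from the connected case. The construction is then organized around an edge clique cover $\cF$ of $G$ together with a linear order $\prec$ on $V(G)$. The digraph $D$ will have vertex set $V(G)\cup\{a_1,\dots,a_{h+1}\}$ and a topological order extending $\prec$ by placing the fresh vertices $a_j$ last; to each member $F\in\cF$ I assign a distinct \emph{prey} vertex $p(F)$ all of whose predators precede it, namely with every vertex of $F$ coming $\prec$-before $p(F)$, and I draw all arcs from $F$ to $p(F)$. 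If $\cF$ is an exact edge clique cover and $p$ is injective using each $a_j$ at most once, then $D$ is acyclic and $C(D)=G\cup\{a_1,\dots,a_{h+1}\}$.

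The combinatorial core is to choose $\prec$ and $\cF$ so that $|\cF|=|V(G)|+h-1$ and all but $h+1$ of the cliques in $\cF$ can be matched to a distinct later vertex of $V(G)$. For the chordal part of $G$ I would use a perfect-elimination-type order, covering each vertex's set of later neighbours by a clique; as in Roberts' bound (Theorem~\ref{roberts}) these $|V(G)|-1$ backbone cliques can all but one be given a later real prey, accounting for the summand $1$. Each hole $C_i$ of length $\ell_i$ then contributes its $\ell_i$ chordless edges as cliques, and I would lay the vertices of $C_i$ out monotonically along $\prec$ so that every hole edge except the ``wrap-around'' edge joining the two $\prec$-extremal vertices of $C_i$ can be matched to a later real vertex. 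The holes thus add $h$ both to $|\cF|$ and to the number of unmatched cliques, and assigning the resulting $h+1$ unmatched cliques (one from the backbone, one per hole) to $a_1,\dots,a_{h+1}$ completes the count.

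The hard part will be controlling the \emph{interaction at shared edges}, which is exactly where the hypothesis is used. If an edge $e$ lies in several holes, the clique $\{e\}$ must appear in $\cF$ only once and be prey-assigned only once, and the monotone layouts that the various holes through $e$ impose must be mutually compatible, so that (i) no directed cycle is created through $e$, and (ii) the shared edge neither forces two holes to demand a fresh vertex for the same clique nor robs either hole of the single ``slack'' real prey that keeps its contribution at exactly one. The condition that any two holes share at most one edge is precisely what makes this possible: two holes overlap in at most one edge, hence in at most two common vertices, so their induced linear orders can be spliced into a single global $\prec$ without conflict, and the wrap-around edges can be chosen away from the shared edges (this also rules out the dense overlaps, such as two $4$-cycles sharing two incident edges, that would break the count). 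I would make this precise by selecting for each hole a distinguished wrap-around edge, arguing from the at-most-one-shared-edge hypothesis that these can be chosen distinct and compatibly, and then verifying injectivity of $p$ and acyclicity of $D$. This compatibility argument is the main obstacle; once $\prec$, $\cF$, and $p$ are fixed, confirming that $C(D)=G\cup\{a_1,\dots,a_{h+1}\}$ is the routine check that every predator set is a clique of $G$ and that every edge of $G$ is covered exactly once in this way.
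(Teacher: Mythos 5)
There is a genuine gap, and it sits exactly where you place it yourself: the ``compatibility argument'' that you defer is not a technical afterthought but the entire content of the theorem. What you describe up to that point is essentially the lower-bound side of Opsut's inequality $k(G)\ge \theta_E(G)-|V(G)|+2$: counting cliques in an edge clique cover tells you how many prey you need, but it does not produce an acyclic assignment in which every in-neighborhood is a clique and every edge is covered. Your sketch never establishes (i) that the per-hole monotone layouts can be spliced into one global order $\prec$, (ii) that the wrap-around edges can be chosen so that each hole loses only one real prey, or (iii) that the prey map is injective. Point (iii) already fails on the simplest example: each real vertex can be the prey of only one clique (its in-neighborhood must be a clique of $G$), so the $|V(G)|-1$ backbone cliques plus the $\sum_i \ell_i$ hole-edge cliques demand far more real prey than the at most $|V(G)|$ available, and even for $G=C_4$ two hole edges, not one, end up needing fresh vertices (the wrap-around edge \emph{and} the edge whose natural prey would have to lie after the $\prec$-maximal vertex). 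The accounting ``$h+1$ unmatched cliques'' therefore does not follow from the construction as described. Moreover, condition (E0) permits precisely the dense configurations where order-splicing breaks: $K_{2,2,2}$ has three pairwise edge-disjoint $4$-holes on six vertices, and the prism $K_3\Box K_2$ realizes two edge-disjoint $4$-holes whose ``monotone layouts'' cannot be made simultaneously consistent with a single choice of wrap-around edges without further argument. It is no accident that both the cited source \cite{LKKS} and the present paper devote dedicated structural lemmas and reduction operations to exactly these subgraphs.

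For comparison, the paper does not prove Theorem~\ref{thm:hed} by a global ordering at all; it obtains it as the special case $\text{(E0)}\Rightarrow\text{(E1)}$ of Theorem~\ref{thm:main}, which is proved by induction on the number of holes. The inductive step first classifies each hole (Theorem~\ref{thm:strthm}) as either having a clique $X_C$ with no $C$-avoiding path between non-adjacent hole vertices, or living inside a $K_t\Box K_2$ or a $K_2^m$; prisms and $K_{2,2,2}$'s are eliminated by adding edges (Lemmas~\ref{lem:K+e} and~\ref{bkprism}), which strictly decreases the hole count, and the remaining case is handled by splitting $G$ along the clique vertex cut $X_C\cup\{v_{t,j},v_{t,j+1}\}$ and deleting the edge $v_{t,j}v_{t,j+1}$ (Lemma~\ref{noavoid}), then gluing the two acyclic digraphs supplied by the induction hypothesis. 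If you want to salvage your approach, you would need to prove the splicing and injectivity claims directly, and you should expect to need the same structural dichotomy (prism / $K_{2,2,2}$ / neither) that the paper develops; as written, the proposal asserts rather than proves the step on which the bound depends.
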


In this paper, we generalize
the above results except Theorem \ref{compthree}.
Our main result is the following: 

\begin{Thm}\label{thm:main}
Let $G$ be a graph with exactly $h$ holes
satisfying the following property:
\begin{itemize}
\item[{\rm (E1)}]
For any two distinct holes $C_1$ and $C_2$ of $G$,
$|E(C_1) \cap E(C_2)| \leq 1$.
\end{itemize}
Then the competition number of $G$ is at most $h+1$.
\end{Thm}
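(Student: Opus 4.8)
The plan is to reduce the whole problem to a statement about vertex orderings and clique covers, and then to isolate the difficulty in a single structural lemma about graphs satisfying (E1).

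\smallskip
\noindent\textbf{Step 1: a clique-cover bound for $k(G)$.}
First I would record the standard way of realizing $G$ together with isolated vertices as a competition graph. Fix an ordering $v_1,\dots,v_n$ of $V(G)$ and, for each $j$, write $N^-(v_j)=N(v_j)\cap\{v_1,\dots,v_{j-1}\}$ for the back-neighborhood and let $\theta_j$ be the clique cover number of $G[N^-(v_j)]$ (with $\theta_j=0$ when $N^-(v_j)=\emptyset$). Covering $N^-(v_j)$ by $\theta_j$ cliques and adjoining $v_j$ to each gives cliques of $G$, and the family of all of them is an edge clique cover: the edge $\{v_i,v_j\}$ with $i<j$ lies in the clique of $\{v_j\}\cup(\text{some piece of }N^-(v_j))$ containing $v_i$. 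To build the digraph I assign to each clique a distinct prey, namely a strictly later vertex, or, when none is available, a freshly added isolated sink vertex. Since a clique coming from $v_j$ has $v_j$ as its last vertex, a single-machine (Hall-type) scheduling count shows that the number of new vertices needed is at most $\max_{1\le t\le n}\big(\sum_{j\ge t}\theta_j-(n-t)\big)$, whence
\begin{equation*}
k(G)\ \le\ 1+\sum_{j=1}^{n}(\theta_j-1)^+ .
\end{equation*}
All arcs run from smaller to larger index, so the digraph is acyclic, the added vertices are isolated in the competition graph, and the cliques introduce no spurious edges. Hence it suffices to produce \emph{one} ordering with $\sum_j(\theta_j-1)^+\le h$.

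\smallskip
\noindent\textbf{Step 2: elimination and charging to holes.}
I would read the ordering backwards as a vertex-elimination process: delete $v_n,v_{n-1},\dots$, so that when $v_j$ is deleted from the current graph $H$ its back-neighborhood is exactly its current neighborhood $N_H(v_j)$, and the cost paid is $(\theta(v_j,H)-1)^+$ with $\theta(v_j,H)$ the clique cover number of $N_H(v_j)$. Every graph $H$ arising is an induced subgraph of $G$, so $H$ again satisfies (E1) and each hole of $H$ is a hole of $G$. The idea is to delete a simplicial vertex whenever one exists (cost $0$), and otherwise delete a vertex whose cost is paid for by holes through it. Because a hole of $H$ is destroyed the first time one of its vertices is deleted, if at each positive-cost deletion I can charge the cost to that many distinct holes of $H$ through the deleted vertex, then the charged holes are pairwise distinct across the entire process; summing gives $\sum_j(\theta_j-1)^+\le h(G)=h$, which combined with Step 1 yields $k(G)\le h+1$.

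\smallskip
\noindent\textbf{Step 3: the removal lemma (the crux).}
Everything now rests on the following claim, which I expect to be the main obstacle:
\begin{equation*}
\text{in every induced subgraph }H\text{ there is a }v\text{ with }\ \theta(v,H)\ \le\ 1+\#\{\text{holes of }H\text{ through }v\}.
\end{equation*}
For chordal $H$ a simplicial vertex works ($1\le 1$); the content is the non-chordal case. Here I would begin from a hole $C$ of $H$ and choose $v$ on $C$, so that the two $C$-neighbors of $v$ form a non-adjacent pair in $N_H(v)$ accounted for by $C$. The task is then to build a clique cover of $N_H(v)$ exceeding $1$ by at most the number $p$ of holes through $v$, i.e.\ to $(p{+}1)$-colour the complement of $G[N_H(v)]$. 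This is exactly where (E1) enters: since any two holes meet in at most one edge, two distinct holes through $v$ cannot share both edges incident to $v$, so the non-adjacent pairs of $N_H(v)$ forced by holes are genuinely distinct, and (E1) constrains how any further non-adjacency of $N_H(v)$ could arise without creating an additional hole through $v$. Converting this into a clean decomposition of $N_H(v)$ — in particular handling a vertex lying on several holes that pairwise share a single edge, and excluding non-hole non-adjacencies in the chosen neighborhood — is the delicate case analysis on which the whole argument depends. Once the removal lemma is proved, Steps 2 and 1 assemble it into the desired bound $k(G)\le h+1$.
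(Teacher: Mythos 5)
Your Steps 1 and 2 are sound: the bound $k(G)\le 1+\sum_j(\theta_j-1)^+$ via an ordering, Hall-type prey assignment, and the backward-elimination charging argument (each hole of $G$ can be charged only at the unique step at which its last surviving vertex is deleted) are correct reductions, and they are genuinely different from the route the paper takes (the paper never works with local clique covers of neighborhoods; it classifies each hole into one of three structural types, proves that $X_C\cup\{v_j,v_{j+1}\}$ is a clique vertex cut in the base case, and handles $K_{2,2,2}$'s and prisms by explicit edge-addition operations that destroy holes, all wrapped in an induction on $h$).

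The gap is Step 3, and it is not a small one: the ``removal lemma'' is the entire content of the theorem in your scheme, and you have not proved it --- you only indicate where (E1) ought to enter. Worse, the one concrete instruction you give (``begin from a hole $C$ and choose $v$ on $C$'') does not by itself work: take a $5$-hole $C=vu_1u_2u_3u_4v$ together with a disjoint $4$-hole $C'$ all four of whose vertices are adjacent to $v$ and to nothing else outside $C'$. This graph satisfies (E1) and has exactly two holes, but $\theta(N(v))=4$ while $v$ lies on only one hole, so $v$ violates your inequality even though it sits on a hole; the vertex that works is $u_2$, not $v$. So the lemma, if true, requires a careful selection rule (roughly, a vertex deep inside one of the paper's sets $S_{t,i}$, or a simplicial vertex), and the verification that such a vertex always satisfies $\theta(N(v))\le 1+\#\{\text{holes through }v\}$ would need essentially all of the structure theory the paper develops (Theorem \ref{thm:strthm} on the three hole types, the clique property of $X_C$, the prism and $K_2^m$ configurations). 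I could not falsify the removal lemma, but nothing in your proposal establishes it, and it is not a statement the paper proves or that follows quickly from its lemmas; as written the proof is therefore incomplete at its central step.
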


%%%%%%%%%%%%%%%%%%%%%%%%%%%%%%%%%%%%%%%%%%%%%%%%%%%%%%%%%%%%%%%%%%%%%%%%%%%%%
\subsection{Relationships among conditions}
%%%%%%%%%%%%%%%%%%%%%%%%%%%%%%%%%%%%%%%%%%%%%%%%%%%%%%%%%%%%%%%%%%%%%%%%%%%%%

We remark that
the class of graphs satisfying
the condition {\rm (E1)} is larger than
the class of graphs satisfying
one of the conditions (LC), (K), and (E0).
(See Figure \ref{fig:relationship}. 
Figure \ref{fig:ex} and Table \ref{tab:ex} 
give examples which shows that each region $i$ 
in Figure \ref{fig:relationship} 
is not empty.) 

\begin{figure}[h]
\begin{center}
\includegraphics[scale=0.9]{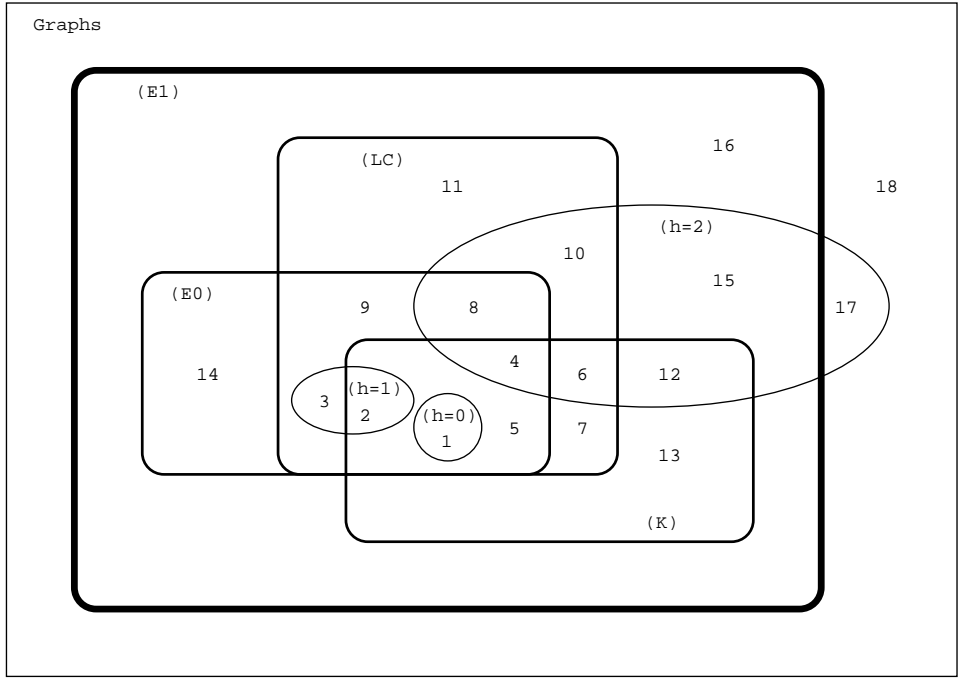}
\caption{Relationships among conditions, 
where (h=$i$) means the class of graphs having exactly $i$ holes 
($i=0,1,2$).}
\label{fig:relationship}
\end{center}
\end{figure}

\begin{Rem}
{\rm 
The disjointness of the class of graphs which satisfy the condition (E0) 
and do not satisfy the condition (LC) 
(the region 14 in Figure \ref{fig:relationship})
with the class of graphs satisfying the condition (K) 
or the class of graphs with $h(G)=2$ 
follows from \cite[Theorem 1.4]{LKKS}  
that states a graph which satisfies the condition (E0) 
and do not satisfy the condition (LC) must have an 
induced subgraph isomorphic to the complete tripartite graph 
$K_{2,2,2}$. 
Here $K_{2,2,2}$ has three holes which violate the condition (K).  
}
\end{Rem}

\begin{figure}[!h]
\begin{center}
\begin{tabular}{ccc}
\includegraphics[scale=0.7]
{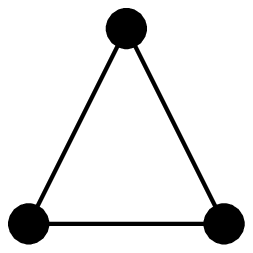} &
\includegraphics[scale=0.7]
{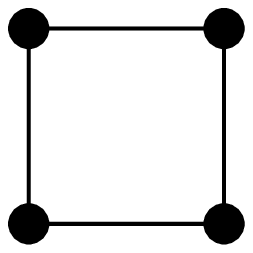} &
\includegraphics[scale=0.7]
{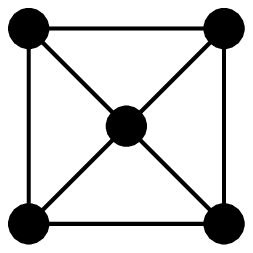}   \\ 
$\Gamma_1$ &
$\Gamma_2$ &
$\Gamma_3$  \\
& &  \\
\includegraphics[scale=0.7]
{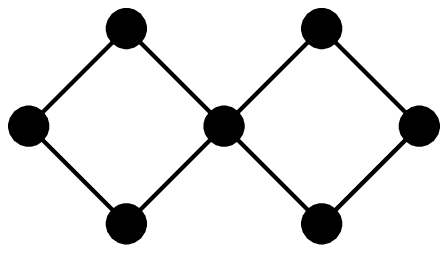} &
\includegraphics[scale=0.7]
{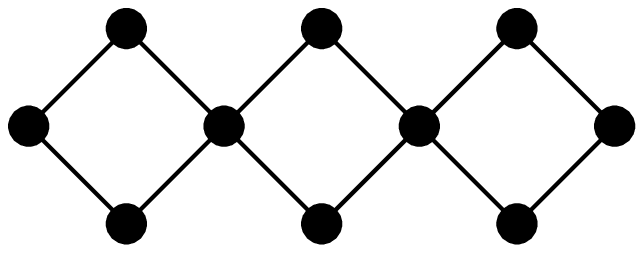} &
\includegraphics[scale=0.7]
{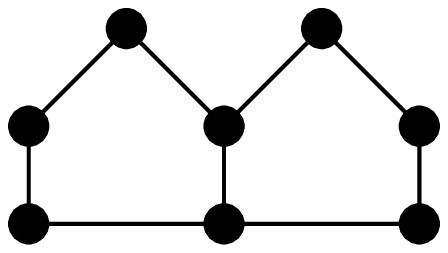}  \\ 
$\Gamma_4$ &
$\Gamma_5$ &
$\Gamma_6$ \\
& &  \\
\includegraphics[scale=0.7]
{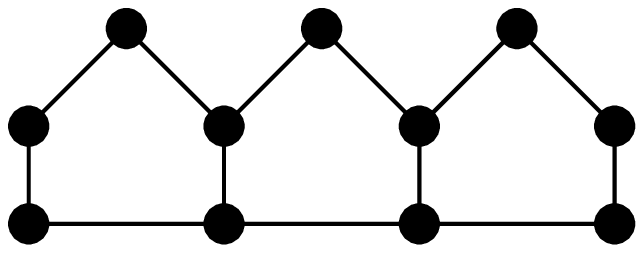} &
\includegraphics[scale=0.7]
{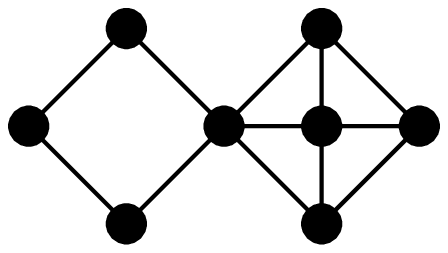} &
\includegraphics[scale=0.7]
{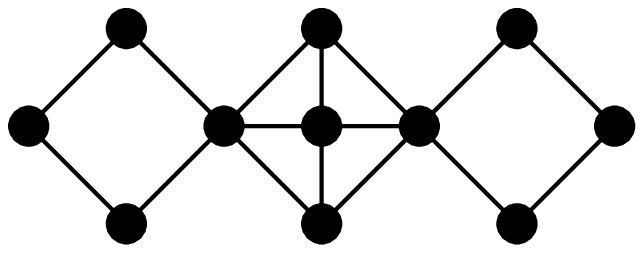}  \\ 
$\Gamma_7$ &
$\Gamma_8$ &
$\Gamma_9$ \\
& &  \\
\includegraphics[scale=0.7]
{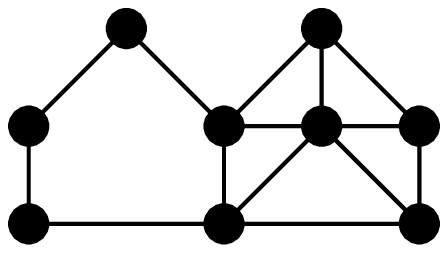} &
\includegraphics[scale=0.7]
{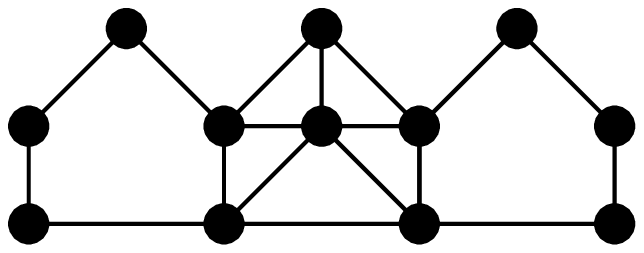} &
\includegraphics[scale=0.7]
{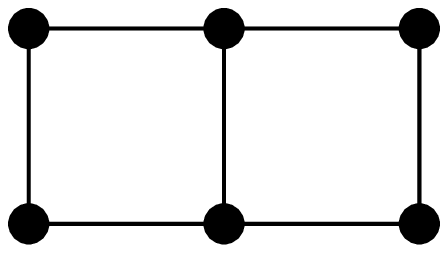} \\ 
$\Gamma_{10}$ &
$\Gamma_{11}$ &
$\Gamma_{12}$ \\
& & \\
\includegraphics[scale=0.7]
{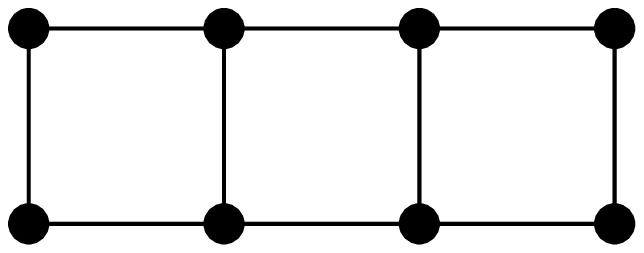} &
\includegraphics[scale=0.7]
{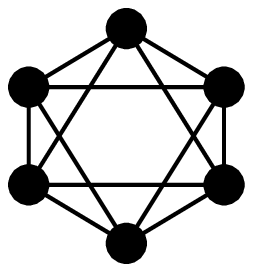} & 
\includegraphics[scale=0.7]
{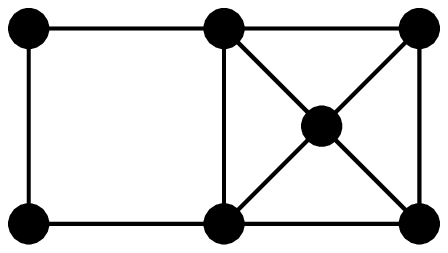} \\ 
$\Gamma_{13}$ &
$\Gamma_{14}$ &
$\Gamma_{15}$ \\
& &  \\
\includegraphics[scale=0.7]
{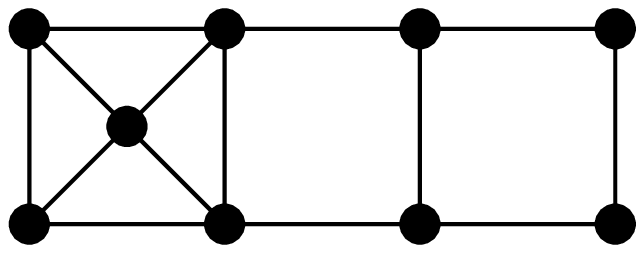} &
\includegraphics[scale=0.7]
{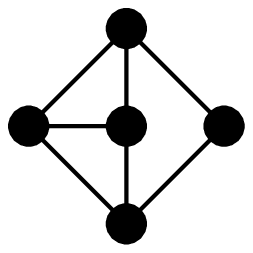} &
\includegraphics[scale=0.7]
{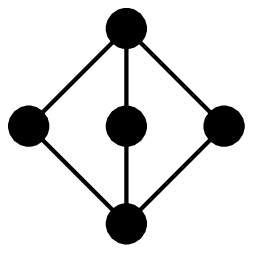} \\ 
$\Gamma_{16}$ &
$\Gamma_{17}$ &
$\Gamma_{18}$ \\
\end{tabular}
\end{center}
\caption{Examples of graphs where $\Gamma_i$ belongs to the region $i$ 
in Figure \ref{fig:relationship}}
\label{fig:ex}
\end{figure}

%%%%%%%%%%%%%%%%%%%%%%%%%%%%%%%%%%%%%%%%%%%%
\begin{table}[h]
\begin{center}
\begin{tabular}{|c||c|c|c|c||c|}
\hline
Graph & $h(G)$ & (LC) & (K) & (E0) & (E1) \\
\hline
\hline

$\Gamma_{1}$ & 0 & $\ck$ & $\ck$ & $\ck$ & $\ck$  \\
\hline
$\Gamma_{2}$ & 1 & $\ck$ & $\ck$ & $\ck$ & $\ck$  \\
\hline
$\Gamma_{3}$ & 1 & $\ck$ & & $\ck$ & $\ck$  \\
\hline
$\Gamma_{4}$ & 2 & $\ck$ & $\ck$ & $\ck$ & $\ck$  \\
\hline
$\Gamma_{5}$ & 3 & $\ck$ & $\ck$ & $\ck$ & $\ck$  \\
\hline
$\Gamma_{6}$ & 2 & $\ck$ & $\ck$ &  & $\ck$   \\
\hline
$\Gamma_{7}$ & 3 & $\ck$ & $\ck$ &  & $\ck$  \\
\hline
$\Gamma_{8}$ & 2 & $\ck$ &  & $\ck$ & $\ck$  \\
\hline
$\Gamma_{9}$ & 3 & $\ck$ &  & $\ck$ & $\ck$  \\
\hline
$\Gamma_{10}$ & 2 & $\ck$ &  &  & $\ck$  \\
\hline
$\Gamma_{11}$ & 3 & $\ck$ &  &  & $\ck$ \\
\hline
$\Gamma_{12}$ & 2 &  & $\ck$ &  & $\ck$  \\
\hline
$\Gamma_{13}$ & 3 &  & $\ck$ &  & $\ck$  \\
\hline
$\Gamma_{14}$ & 3 &  &  & $\ck$ & $\ck$  \\
\hline
$\Gamma_{15}$ & 2 &  &  &  & $\ck$  \\
\hline
$\Gamma_{16}$ & 3 &  &  &  & $\ck$ \\
\hline
$\Gamma_{17}$ & 2 &  &  &  &  \\
\hline
$\Gamma_{18}$ & 3 &  &  &  &  \\
\hline
\end{tabular}
\end{center}
\caption{Graphs $\Gamma_i$ in Figure \ref{fig:ex} 
and the conditions (LC), (K), (E0), (E1)}
\label{tab:ex}
\end{table}
%%%%%%%%%%%%%%%%%%%%%

\begin{Prop}
If a graph $G$ satisfies the condition {\rm (LC)},
then $G$ also satisfies the condition {\rm (E1)}.
\end{Prop}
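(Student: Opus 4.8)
The plan is to derive (E1) directly from the vertex-intersection bound that is already built into the definition of an independent hole. Fix two distinct holes $C_1$ and $C_2$ of $G$. Since $G$ satisfies (LC), every hole of $G$ is independent; in particular $C_1$ is independent, so applying the defining property with $C = C_1$ and $C' = C_2$ immediately yields $|V(C_1) \cap V(C_2)| \leq 2$.

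The key observation I would use is that any edge lying in $E(C_1) \cap E(C_2)$ has both of its endpoints in $V(C_1) \cap V(C_2)$. Indeed, an edge of a cycle joins two of that cycle's own vertices, so an edge common to $C_1$ and $C_2$ joins two vertices that belong simultaneously to $V(C_1)$ and to $V(C_2)$. Hence every element of $E(C_1) \cap E(C_2)$ is an edge of $G$ whose two endpoints lie inside the set $V(C_1) \cap V(C_2)$, a set with at most two elements.

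Because $G$ is a simple graph, at most one edge joins any given pair of vertices, so a vertex set with at most two elements spans at most one edge. It follows that $|E(C_1) \cap E(C_2)| \leq 1$. As $C_1$ and $C_2$ were arbitrary distinct holes, $G$ satisfies (E1), which is exactly what is to be shown.

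I do not expect a genuine obstacle here: the whole argument is a one-line counting step once the vertex bound is in hand. The only point I would state carefully is that a shared edge forces both of its endpoints into $V(C_1)\cap V(C_2)$, and this is immediate from the fact that each edge of a cycle is incident only to vertices of that cycle. It is worth remarking that the second clause of the definition of independence (the requirement $|E(C)\cap E(C')|=1$ and $|V(C)| \geq 5$ when $|V(C)\cap V(C')|=2$) plays no role; the inequality $|V(C_1)\cap V(C_2)|\le 2$ by itself already forces $|E(C_1)\cap E(C_2)|\le 1$.
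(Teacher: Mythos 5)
Your proof is correct and follows exactly the same route as the paper's: deduce $|V(C_1)\cap V(C_2)|\le 2$ from (LC) and then observe that a vertex set of size at most two can span at most one edge, so $|E(C_1)\cap E(C_2)|\le 1$. You simply spell out the counting step that the paper leaves implicit.
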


\begin{proof}
If a graph $G$ satisfies the condition (LC), then
it holds that $|V(C) \cap V(C')| \leq 2$
for any distinct holes $C$ and $C'$.
This implies that $|E(C) \cap E(C')| \leq 1$
holds for any distinct holes $C$ and $C'$.
Hence $G$ satisfies the condition {\rm (E1)}.
\end{proof}

The following proposition is rather obvious: 

\begin{Prop}
If a graph $G$ satisfies the hole-edge-disjoint condition
{\rm (E0)},
then $G$ also satisfies the condition {\rm (E1)}.
\end{Prop}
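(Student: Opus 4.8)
The plan is to simply unwind the two conditions and observe that (E0) is a strictly stronger requirement than (E1). First I would recall that the hole-edge-disjoint condition (E0) asserts that any two distinct holes $C_1$ and $C_2$ of $G$ are mutually edge disjoint; spelling out what ``edge disjoint'' means set-theoretically, this says precisely that $E(C_1) \cap E(C_2) = \emptyset$ for every pair of distinct holes.

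Next I would compare this with condition (E1), which only requires $|E(C_1) \cap E(C_2)| \le 1$ for every pair of distinct holes $C_1$ and $C_2$. Assuming $G$ satisfies (E0), I would fix an arbitrary pair of distinct holes $C_1, C_2$ and note that $|E(C_1) \cap E(C_2)| = |\emptyset| = 0$. Since $0 \le 1$, the inequality demanded by (E1) holds for this pair, and as the pair was arbitrary, $G$ satisfies (E1).

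There is essentially no obstacle here: once the meaning of ``edge disjoint'' is rewritten as ``empty edge-intersection,'' the implication reduces to the trivial numerical fact that $0 \le 1$. I would not expect any case analysis, auxiliary lemma, or appeal to the structure of holes to be needed, which is consistent with the remark preceding the statement that the proposition is rather obvious.
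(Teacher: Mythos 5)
Your proposal is correct and matches the paper's intent exactly: the paper gives no written proof, simply declaring the proposition ``rather obvious,'' and the implicit argument is precisely your observation that edge-disjointness means $|E(C_1)\cap E(C_2)|=0\le 1$. Nothing further is needed.
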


To show that 
the class of graphs satisfying 
the condition (K) is contained in the class of graphs 
satisfying the condition (E1), 
we prepare the following two lemmas: 

\begin{Lem}\label{lem:triorhole}
Let $G$ be a graph and $C$ be a cycle of $G$.
Then exactly one of the following holds: 
\begin{itemize}
\item[{\rm (a)}]
$C$ is an induced cycle of $G$, 
\item[{\rm (b)}]
There exist induced cycles $C_1, \ldots, C_s$ $(s \geq 2)$ in $G$ 
such that 
\begin{itemize}
\item[{\rm (1)}]
$V(C_i) \subsetneq V(C)$ $(i=1, \ldots, s)$, 
\item[{\rm (2)}]
Any edge $e$ of $C$ is an edge of 
$C_i$ for some $i \in \{1, \ldots, s\}$,  
\item[{\rm (3)}]
For any edge $e$ of $E(C_i) \setminus E(C)$, 
there exists unique $j\in \{1, \ldots, i-1,i+1, \ldots, s\}$
such that $e \in E(C_j)$. 
\end{itemize}
\end{itemize}
\end{Lem}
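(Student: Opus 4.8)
The plan is to argue by strong induction on the length $|V(C)|$, splitting $C$ along a single chord into two strictly shorter cycles and invoking the induction hypothesis on each. It will be convenient to prove a slightly stronger form of (b) in which item (2) is replaced by ``every edge of $C$ lies on \emph{exactly} one of the cycles $C_i$''. This strengthening is precisely what makes the multiplicity bookkeeping for (3) go through, and it trivially implies (2) as stated.

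First I would dispose of mutual exclusivity, so that the conclusion really reads ``exactly one''. If $C$ is induced, then the subgraph $G[V(C)]$ consists of the edges of $C$ alone. Hence any cycle $C_i$ with $V(C_i) \subsetneq V(C)$ would have to use only edges of $C$; but the subgraph of $C$ on a proper subset of $V(C)$ is obtained from a cycle by deleting at least one vertex, so it is a disjoint union of paths and contains no cycle. Thus (b) is impossible when (a) holds, and conversely the construction below always produces one of the two outcomes.

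For the inductive step, if $C$ is induced we are in case (a). Otherwise $C$ has a chord $e=xy$. The two $x$--$y$ arcs of $C$, each closed off by $e$, give two cycles $C'$ and $C''$. Because $e$ is a chord, each arc contains an interior vertex, so both $C'$ and $C''$ have length at least $3$ and strictly less than $|V(C)|$, and moreover $V(C'),V(C'')\subsetneq V(C)$ with $V(C')\cap V(C'')=\{x,y\}$. Applying the induction hypothesis to $C'$ and to $C''$ yields a nonempty family of induced cycles for each (a single cycle when the piece is already induced), and I would take $C_1,\dots,C_s$ to be the union of the two families; since each family is nonempty, $s\geq 2$, and (1) is immediate from $V(C'),V(C'')\subsetneq V(C)$.

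The heart of the matter, and the step I expect to be the main obstacle, is verifying (3) together with the strengthened (2). The two structural facts I would establish are: (i) every chord of $C'$ is a chord of $C$, since the edges of $C'$ other than $e$ are exactly the consecutive edges of the corresponding arc of $C$, so two vertices non-adjacent in $C'$ are non-adjacent in $C$; and (ii) the $C'$-subfamily and the $C''$-subfamily can share only the edge $e$, because any common edge would have both endpoints in $V(C')\cap V(C'')=\{x,y\}$ and $e$ is the only such edge used. Granting these, one counts multiplicities edge by edge. A boundary edge of $C$ lies on exactly one arc, hence (by the strengthened (2) applied inductively) on exactly one cycle of exactly one subfamily, so it is covered exactly once. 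The chord $e$ lies on exactly one cycle of each subfamily, for a total of two, which is consistent with (3) since $e\notin E(C)$. Any other chord occurring in the decomposition is a chord of exactly one of $C',C''$ by (i), hence occurs exactly twice in that subfamily and not at all in the other by (ii). Crossing chords of $C$ joining the interiors of the two arcs are chords of neither $C'$ nor $C''$ and never enter the decomposition, which is harmless because (3) constrains only edges that actually occur. The delicate point is the insistence on ``exactly once'' for boundary edges: this must be propagated through the induction rather than appended at the end, since otherwise the shared chord $e$ could be overcounted and (3) would fail.
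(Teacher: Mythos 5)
Your proof is correct, and it follows the same basic plan as the paper -- induction on $|V(C)|$, splitting along a chord -- but executes the decomposition differently. The paper chooses a \emph{minimum} chord $e^*=v_iv_j$, so that the short arc plus $e^*$ is automatically an induced cycle $C^*$, and then recurses only on the other side $C'$; you instead take an arbitrary chord and recurse on both arcs, which forces you to strengthen condition (2) to ``every edge of $C$ lies on \emph{exactly} one $C_i$'' so that the multiplicity of the shared chord $e$ comes out to exactly two. This extra care is not wasted: the paper's proof simply asserts that the assembled family satisfies (1)--(3), but the same subtlety you flag is present there too -- $e^*$ lies in $E(C')$, so the unstrengthened inductive hypothesis only guarantees it is covered \emph{at least} once by the $C'_i$, which is not enough to conclude (3) for $e^*$ in the combined family $\{C'_1,\dots,C'_{s'},C^*\}$. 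Your observations that every chord of an arc-cycle is a chord of $C$, and that the two subfamilies can share only the edge $e$ because $V(C')\cap V(C'')=\{x,y\}$, make the edge-by-edge count airtight; your explicit argument that (a) and (b) are mutually exclusive (a proper vertex subset of an induced cycle induces a union of paths) is also cleaner than the paper's, which addresses exclusivity only in the base case. In short, your route buys a fully verified condition (3) at the cost of carrying a strengthened invariant through the induction, while the paper's minimum-chord trick shortens the construction but leaves the bookkeeping implicit.
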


\begin{proof}
We show by induction on $|V(C)|$. 
If $|V(C)|=3$, then (a) holds and (b) does not holds
since any cycle of length $3$ is an induced cycle. 
Assume that the lemma holds for any cycle $C$ with 
$|V(C)|=t$. 
Let $C=v_0v_1 v_2 \cdots v_t v_0$ be a cycle with $|V(C)|=t+1$.  
Consider the subgraph $H$ of $G$ induced by $V(C)$. 
If $H$ is a cycle, then $C$ is an induced cycle in $G$ 
and so (a) holds. 
If $H$ is not a cycle, then $C$ is not an induced cycle in $G$ 
and so (a) does not hold. Now we show (b) holds. 
Note that any edge in $E(H) \setminus E(C)$ is a chord for $C$. 
Let $e^* = v_i v_j$ be a minimum chord for $C$, i.e., 
$|i-j|$ is smallest among all the chords for $C$. 
Then, the $(v_i,v_j)$-section $P_1$ of the cycle $C$ 
and the edge $e^*$ form an induced cycle $C^*$ in $G$ 
satisfying $V(C^*) \subsetneq V(C)$ and $e \in E(C^*)$ for $e \in E(P)$, 
and the $(v_j,v_i)$-section $P_2$ of the cycle $C$ and the edge $e^*$ 
form a cycle $C'$ in $G$ with $|V(C')| < t+1$. 
By the induction hypothesis, 
one of the following holds: 
(a)$'$ $C'$ is an induced cycle of $G$; 
(b)$'$ 
there exist induced cycles $C'_1, \ldots, C'_{s'}$ $(s' \geq 2)$ in $G$ 
such that the conditions (1)-(3) of (b) hold.  
If (a)$'$ holds, then let $\cC = \{C', C^*\}$.  
If (b)$'$ holds, then let $\cC = \{C'_1, \ldots, C'_{s'}, C^*\}$.  
In each case, the family $\cC$ of induced cycles in $G$ satisfies 
the conditions (1)-(3) of (b). Thus (b) holds. 
Hence the lemma holds. 
\end{proof}

The following lemma is well-known: 

\begin{Lem}\label{lem:sym-diff}
Let $C$ and $C'$ be two induced cycles in a graph $G$. 
Then, the subgraph of $G$ induced by 
the symmetric difference of $E(C)$ and $E(C')$ is
an edge-disjoint union of cycles of $G$. 
\end{Lem}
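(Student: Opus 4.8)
The plan is to reduce the statement to the standard characterization of the cycle space over $\mathbb{F}_2$: a subgraph is an edge-disjoint union of cycles precisely when every one of its vertices has even degree. Since symmetric difference of edge sets is exactly addition of the corresponding indicator vectors over $\mathbb{F}_2$, the claim amounts to checking that $E(C) \triangle E(C')$ lies in the cycle space, and then invoking the even-degree decomposition.

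First I would set $F = E(C) \triangle E(C')$ and let $H$ be the subgraph of $G$ whose edge set is $F$, together with the vertices incident to these edges. The key local computation is the degree of an arbitrary vertex $v$ in $H$. Because $C$ is a cycle, $\deg_C(v) \in \{0,2\}$, and likewise $\deg_{C'}(v) \in \{0,2\}$. Writing $m(v)$ for the number of edges incident with $v$ that belong to both $E(C)$ and $E(C')$, an edge at $v$ survives into $F$ exactly when it lies in precisely one of the two cycles, so
\[
\deg_H(v) = \deg_C(v) + \deg_{C'}(v) - 2\,m(v).
\]
As the first two terms are even, $\deg_H(v)$ is even for every vertex $v$.

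It then remains to invoke the classical fact that a finite graph in which every vertex has even degree is an edge-disjoint union of cycles. I would prove this by induction on the number of edges. If there are no edges, the statement is vacuous. Otherwise, restricting attention to the vertices of positive degree yields a graph of minimum degree at least $2$, which therefore contains a cycle $Z$. Deleting the edges of $Z$ keeps all degrees even and strictly decreases the edge count, so by the induction hypothesis the remaining subgraph is an edge-disjoint union of cycles; adjoining $Z$ completes the decomposition of $H$.

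There is no genuine obstacle here; the only point requiring care is the parity bookkeeping in the degree formula. Specifically, one must observe that the edges incident with $v$ shared by $C$ and $C'$ cancel in pairs, which is exactly what forces $\deg_H(v)$ to be even no matter how $C$ and $C'$ overlap at $v$ (in particular, when $C$ and $C'$ traverse a common edge at $v$, or meet at $v$ without sharing an edge there).
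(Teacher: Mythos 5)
Your proof is correct. The paper states this lemma without proof, labeling it ``well-known,'' so there is no in-paper argument to compare against; your argument---observing that every vertex has even degree in the edge-induced subgraph on $E(C)\,\triangle\,E(C')$ because the shared edges at each vertex cancel in pairs, and then invoking the standard even-degree decomposition into edge-disjoint cycles---is exactly the standard proof one would supply, and the parity bookkeeping is handled correctly.
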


\begin{Prop}
If a graph $G$ satisfies the condition {\rm (K)},
then $G$ also satisfies the condition {\rm (E1)}.
\end{Prop}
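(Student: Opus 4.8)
I would argue the contrapositive: assuming that $G$ violates {\rm (E1)}, I will produce a hole none of whose edges is private, which is exactly a violation of {\rm (K)}. So suppose there are two distinct holes $C_1$ and $C_2$ with $|E(C_1)\cap E(C_2)|\geq 2$. Since $C_1$ and $C_2$ are distinct induced cycles, neither edge set contains the other, so $E(C_1)$ splits into a nonempty set of \emph{shared} edges (those in $E(C_2)$) and a nonempty set of \emph{private-to-$C_1$} edges (those in $E(C_1)\setminus E(C_2)$). The plan is to show that \emph{every} edge of $C_1$ lies in some induced cycle other than $C_1$; then $C_1$ has no edge of the kind required by {\rm (K)}, and {\rm (K)} fails.

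For a shared edge $e\in E(C_1)\cap E(C_2)$ there is nothing to do: the hole $C_2$ is an induced cycle distinct from $C_1$ that contains $e$. The work is in the edges $e\in E(C_1)\setminus E(C_2)$. Such an $e$ lies in the symmetric difference $E(C_1)\triangle E(C_2)$, so by Lemma~\ref{lem:sym-diff} it lies on exactly one cycle $D$ of the edge-disjoint decomposition of the subgraph of $G$ induced by that symmetric difference. Applying Lemma~\ref{lem:triorhole} to $D$, I obtain an induced cycle $C'$ of $G$ with $e\in E(C')$ and $V(C')\subseteq V(D)$: if $D$ is itself induced I take $C'=D$, and otherwise I take $C'$ to be the member of the family $C_1',\dots,C_s'$ furnished by part (b) that contains $e$ (part (b)(2) guarantees one exists). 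It then remains to check that $C'\neq C_1$, which finishes the proof because $C'$ is then an induced cycle other than $C_1$ passing through $e$.

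The verification that $C'\neq C_1$ is the heart of the matter. When $D$ is induced this is immediate: $E(D)\subseteq E(C_1)\triangle E(C_2)$ contains no shared edge, whereas $C_1$ contains at least two shared edges, so $E(D)\neq E(C_1)$. The delicate case is when $D$ is not induced, where Lemma~\ref{lem:triorhole}(b) only gives $V(C')\subsetneq V(D)$. Since an induced cycle is determined by its vertex set (it equals the subgraph induced by that set), $C'=C_1$ would force $V(C_1)\subsetneq V(D)$; then every shared edge of $C_1$ would be a chord of $D$, and $C_1$ would traverse the vertices of $V(C_1)$ in an order that disagrees with their cyclic order along $D$, because $D$ detours through the vertices of $V(D)\setminus V(C_1)$. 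I expect the main obstacle to be turning this ``disagreement'' into a contradiction rigorously: the cycles produced by Lemma~\ref{lem:triorhole} are cut out by repeatedly splitting along a minimum chord, so the chords bounding any one of them are pairwise non-crossing on $D$, while the shared edges that $C_1$ would need as chords must contain a crossing pair. Making the non-crossing (laminar) structure of the decomposition precise, and proving that the chords $C_1$ requires necessarily cross, is the step that needs the most care; once it is in place, $C'\neq C_1$ follows and hence {\rm (K)} must fail.
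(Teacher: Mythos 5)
Your overall plan---negate (E1), decompose the symmetric difference $E(C_1)\triangle E(C_2)$ into cycles via Lemma~\ref{lem:sym-diff}, and refine the non-induced pieces via Lemma~\ref{lem:triorhole}---uses the same two lemmas as the paper, but you aim at a strictly stronger conclusion: that the hole $C_1$ itself has every edge on another induced cycle. This forces you to prove that the induced cycle $C'$ supplied by Lemma~\ref{lem:triorhole} for a non-shared edge $e$ is different from $C_1$, and in the case where the decomposition cycle $D$ is not induced you do not prove it: you only sketch a ``non-crossing chords'' argument and acknowledge that making it precise is ``the step that needs the most care.'' As written this is a genuine gap. Nothing in the statement of Lemma~\ref{lem:triorhole} rules out $C'=C_1$: the lemma only gives $V(C')\subsetneq V(D)$, and the shared edges of $C_1$ (which cannot lie on $D$) could in principle appear among the chords of $D$ used in the splitting, so the identification $C'=C_1$ is not excluded by anything you have actually established. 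The proof is therefore incomplete exactly at the point you yourself identify as its heart.

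The paper avoids this difficulty by not insisting that $C_1$ be the hole that witnesses the failure of (K). It runs a trichotomy on the pieces: the cycles of the symmetric-difference decomposition are never triangles (a triangle edge would be a chord of $C_1$ or $C_2$); if one of them, or one of the induced cycles produced from it by Lemma~\ref{lem:triorhole}, is a hole distinct from $C_1$ and $C_2$, then \emph{that} hole violates (K), since each of its edges lies on $C_1$, on $C_2$, or (by condition (3) of Lemma~\ref{lem:triorhole}(b)) on a sibling piece of the decomposition. Only in the remaining case, where every such piece is a triangle, does the paper conclude that $C_1$ itself violates (K)---and there your identification problem disappears, because a triangle cannot equal a hole. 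If you want to complete your argument, the cleanest repair is to adopt this case split rather than to develop the laminar-structure argument; otherwise you must actually prove the non-crossing claim, which your write-up leaves as an expectation rather than an argument.
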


\begin{proof}
Suppose that the condition {\rm (E1)} does not hold, 
i.e., there are two distinct holes $C$ and $C'$
such that $|E(C) \cap E(C')| \geq 2$. 
Consider the subgraph $H$ of $G$ 
induced by $(E(C) \cup E(C')) \setminus (E(C) \cap E(C'))$.  
By Lemma \ref{lem:sym-diff}, 
$H$ is an edge-disjoint union of cycles 
$C_1, \ldots, C_k$ $(k \geq 1)$ 
of $G$. 
Note that there is no triangle in $\{C_1, \ldots, C_k\}$ 
(for otherwise, an edge of a triangle would be a chord of 
the hole $C$ or the hole $C'$, which is a contradiction). 
If there is a hole $C_i$ in $\{C_1, \ldots, C_k\}$, 
then all the edges in $C_i$ are contained 
in the hole $C$ or the hole $C'$ 
and so the hole $C_i$ violates the condition (K). 
Therefore we may assume that any cycle in $\{C_1, \ldots, C_k\}$ 
is not an induced cycle. 
By Lemma \ref{lem:triorhole}, 
there exist induced cycles $C_{i,1}, \ldots, C_{i,s_i}$ $(s_i \geq 2)$ 
satisfying the conditions (1)-(3) of (b) in 
Lemma \ref{lem:triorhole} for each $C_i$ $(1 \leq i \leq k)$. 
Note that we can take $C_{i,j}$ so that 
every $C_{i,j}$ is different from 
the holes $C$ and $C'$ since $|E(C) \cap E(C')| \geq 2$. 
Let $\cC:=\{C,C'\} \cup \{C_{i,j} \mid i \in \{1, \ldots, k\}, 
j \in \{1, \ldots, s_i \} \}$. 
If there is a hole $C_{i,j}$ in the family $\cC$ other than $C$ 
and $C'$, then the hole $C_{i,j}$ violates the condition (K) 
since each edge in $C_{i,j}$ is contained 
in an induced cycle in 
$\{C,C', C_{i,1}, \ldots, C_{i,s_i} \}$. 
Therefore, 
all the induced cycles in $\cC$ other than $C$ and $C'$ 
should be triangles. 
However, then, each edge in $E(C) \setminus E(C')$ 
is contained in a triangle in $\cC \setminus \{C, C'\}$ 
and each edge in $E(C) \cap E(C')$ is contained in 
the hole $C'$. 
Thus the hole $C$ violates the condition (K). 
Hence the condition (K) does not hold in any case, 
and so the proposition holds. 
\end{proof}

%%%%%%%%%%%%%%%%%%%%%%%%%%%%%%%%%%%%%%%%%%%%%%%%%%%%%%%%%%%%%%%%%%%%%%%%%%
\subsection{Preliminaries}
%%%%%%%%%%%%%%%%%%%%%%%%%%%%%%%%%%%%%%%%%%%%%%%%%%%%%%%%%%%%%%%%%%%%%%%%%%%

A set $S$ of vertices of a graph $G$ is called a {\it clique} of
$G$ if the subgraph of $G$ induced by $S$ is a complete graph.
A set $S$ of vertices of a graph $G$ is called a {\it vertex cut} of
$G$ if the number of connected components of $G-S$
is greater than that of $G$.

For a hole $C$ in a graph $G$, 
we denote by $X_C$ the set
of vertices which are adjacent to all the vertices of $C$:
\begin{equation}
X_C:=\{v \in V(G) \mid
uv \in E(G) \text{ {\rm for all }} u \in V(C) \}.
\end{equation}
Note that $V(C) \cap X_C = \emptyset$.
Given a walk $W$ of a graph $G$, we denote by $W^{-1}$ 
the walk represented by
the reverse of vertex sequence of $W$.
For a graph $G$ and a hole $C$ of $G$, we call a walk
(resp.\ path) $W$ a {\it $C$-avoiding walk}
(resp. {\it $C$-avoiding path})
if one of the following holds:
\begin{itemize}
\item{}
the length of $W$ is greater than or equal to $2$
and
none of the internal vertices of $W$ are in $V(C) \cup X_C$;
\item{}
the length of $W$ is $1$
and one of the two vertices of $W$ is not in $V(C) \cup X_C$.
\end{itemize}

Throughout this paper, we assume that all subscripts of vertices on a cycle 
are reduced to modular the length of the cycle.

\begin{Thm}[{\cite[Theorem 2.2]{twoholes}}]\label{chordalpart}
Let $G$ be a graph and $k$ be a nonnegative integer.
Suppose that $G$ has a subgraph $G_1$ with $k(G_1) \leq k$
and a chordal subgraph $G_2$ such that
$E(G_1) \cup E(G_2) =E(G)$
and $X:=V(G_1) \cap V(G_2)$ is a clique of $G_2$.
Then $k(G) \le k+1$.
\end{Thm}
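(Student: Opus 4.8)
The plan is to reformulate the competition number through the standard \emph{acyclic edge clique cover} picture and then to glue a digraph for $G_1$ onto a perfect elimination ordering of the chordal part $G_2$. Recall that $k(H)\le s$ holds if and only if there is a linear order $\prec$ on $V(H)\cup F$ with $|F|=s$ together with a family of cliques $\{A_w\}_{w}$ of $H$ (one per vertex $w$, to be read as the in-neighbourhood $N^-(w)$) such that each member of $A_w$ precedes $w$ and $\bigcup_w \binom{A_w}{2}=E(H)$; the vertices of $F$ are exactly the added isolated vertices. First I would invoke the hypothesis $k(G_1)\le k$ to fix such a datum for $G_1$: a linear order $\sigma_1$ on $V(G_1)\cup\{a_1,\dots,a_k\}$ together with a clique family covering $E(G_1)$. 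This entire block will sit at the bottom of the order I build for $G$, so that every arc coming from $G_1$ stays forward and $E(G_1)$ remains covered.

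Next I would exploit chordality. Since $G_2$ is chordal and $X$ is a clique of $G_2$, there is a perfect elimination ordering $z_1,\dots,z_m$ of $V(G_2)$ in which the vertices of $X$ come \emph{last}, say $X=\{z_{p+1},\dots,z_m\}$. Its existence follows by repeatedly removing a simplicial vertex lying outside $X$: a non-complete chordal graph has two non-adjacent simplicial vertices, and these cannot both lie in the clique $X$, so a simplicial vertex outside $X$ is available until only $X$ remains. For each $i$ put $K_i:=\{z_i\}\cup\bigl(N_{G_2}(z_i)\cap\{z_{i+1},\dots,z_m\}\bigr)$; simpliciality makes each $K_i$ a clique of $G_2$, and the $K_i$ together cover $E(G_2)$. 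The point of placing $X$ last is that every edge of $G_2$ meeting a vertex outside $X$ already lies in some $K_i$ with $i\le p$, while the only remaining edges of $G_2$ are those inside $X$, all of which lie in the single clique $X$ of $G$.

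Then I would assemble the order for $G$ as the $G_1$-block followed by $z_p\prec z_{p-1}\prec\cdots\prec z_1\prec a_{k+1}$, where $a_{k+1}$ is one new vertex, so that $G$ acquires $k+1$ extra vertices in all. In this order $z_i$ is the last element of $K_i$, so I set $N^-(a_{k+1}):=K_1$ and $N^-(z_{i-1}):=K_i$ for $2\le i\le p$, hosting every $K_i$ at a fresh vertex, and I set $N^-(z_p):=X$ (using $a_{k+1}$ instead when $V(G_2)=X$ and no such $z_p$ exists). Every in-neighbourhood is then a clique of $G$ whose members precede it, so the resulting digraph $D$ is acyclic, creates no edge outside $G$, and covers $E(G_1)\cup E(G_2)=E(G)$; the only vertices left with no competition edge are $a_1,\dots,a_{k+1}$, whence $k(G)\le k+1$.

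The hard part will be arranging the gluing so that a vertex of $X$ is never forced to be the common prey of two different cliques at once: each $X$-vertex already sits inside the $G_1$-block with a prescribed in-neighbourhood, yet the chordal construction would naively also want to use it as prey. Choosing the elimination order with $X$ last is exactly what defuses this clash, since it pushes every elimination clique $K_i$ onto a brand-new host and collapses all within-$X$ edges into the one clique $X$, which can be parked on a single otherwise-unused vertex. I would also verify the degenerate cases $X=\emptyset$, $|X|=1$, and $V(G_2)=X$ separately, but each merely amounts to dropping or relocating the clique $X$, and the bound $k+1$ persists.
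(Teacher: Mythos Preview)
The paper does not actually prove this theorem; it is quoted verbatim from \cite[Theorem~2.2]{twoholes} and used as a black box, so there is no in-paper argument to compare against. Your proposal is, however, essentially the standard proof of this lemma and is correct in outline: build an acyclic digraph for $G_1$ using the hypothesis $k(G_1)\le k$, then append the vertices of $V(G_2)\setminus X$ in reverse perfect-elimination order (with $X$ placed last in that order), using each successive elimination clique $K_i$ as the in-neighbourhood of the next vertex and parking the single residual clique $X$ at the first appended vertex, with one fresh isolated vertex $a_{k+1}$ to host $K_1$.

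Two small points worth tightening. First, your justification that a simplicial vertex outside $X$ always exists only treats the case where the current remaining graph is non-complete; you should add the trivial observation that if it is complete and still contains a vertex outside $X$, then every vertex is simplicial and you may again delete one outside $X$. Second, the sentence ``the only vertices left with no competition edge are $a_1,\dots,a_{k+1}$'' is stronger than needed and not literally true if $G$ already has isolated vertices; what you actually establish (and all you need) is $C(D)=G\cup\{a_1,\dots,a_{k+1}\}$ with the $a_i$ isolated, regardless of whether $G$ itself has isolated vertices. With these cosmetic fixes the argument is complete.
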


\begin{Lem}[{\cite[Lemma 2.1]{LKKS}}]\label{wheel1}
Let $G$ be a graph and $C$ be a hole of $G$.
Let $x$ and $y$ be two non-adjacent vertices on $C$.
Suppose that there exists a common neighbor $v$ of $x$ and $y$ not on the hole $C$.
Then exactly one of the following holds:
\begin{itemize}
\item[{\rm (a)}]
$v \in X_C$;
\item[{\rm (b)}]
There exists a hole $C^*$ such that
$v \in V(C^*)$, $|E(C) \cap E(C^*)| \geq 2$,
and all the common edges are contained in
exactly one of the $(x,y)$-sections of $C$.
\end{itemize}
\end{Lem}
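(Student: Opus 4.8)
The plan is to argue by a dichotomy on whether $v$ is adjacent to every vertex of $C$. Write $C = w_0 w_1 \cdots w_{n-1} w_0$ with $n = |V(C)| \geq 4$, and relabel so that $x = w_0$ and $y = w_m$; since $x$ and $y$ are non-adjacent on $C$ we have $2 \leq m \leq n-2$, and the two $(x,y)$-sections of $C$ are the arc $P = w_0 w_1 \cdots w_m$ and the arc $Q = w_m w_{m+1} \cdots w_{n-1} w_0$. Let $N := N(v) \cap V(C)$ denote the set of neighbors of $v$ lying on $C$; by hypothesis $w_0, w_m \in N$. If $N = V(C)$, then $v$ is adjacent to all vertices of $C$, so $v \in X_C$ and alternative (a) holds.

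Otherwise $v \notin X_C$, and I would produce the hole required by (b) as follows. Since $N \neq V(C)$ there is at least one vertex of $C$ not adjacent to $v$; consider a maximal arc $w_{p+1}, w_{p+2}, \ldots, w_{q-1}$ of consecutive vertices of $C$ none of which lie in $N$, bounded by $w_p, w_q \in N$ (so $q \geq p+2$). Set $C^* := v\, w_p\, w_{p+1} \cdots w_q\, v$. Because the arc $w_p w_{p+1}\cdots w_q$ is a subpath of the induced cycle $C$, the only edges among $w_p, \ldots, w_q$ are the consecutive ones, and by maximality of the arc the only neighbors of $v$ among these vertices are $w_p$ and $w_q$; hence $C^*$ is an induced cycle, and its length $q - p + 2 \geq 4$ makes it a hole containing $v$.

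It remains to check the two structural conditions in (b). The common edges satisfy $E(C) \cap E(C^*) = \{ w_p w_{p+1}, \ldots, w_{q-1} w_q \}$, so $|E(C) \cap E(C^*)| = q - p \geq 2$. For the localization of these edges in a single section -- the point I expect to require the most care -- the key observation is that the non-neighbor run $w_{p+1}, \ldots, w_{q-1}$ cannot contain either $w_0 = x$ or $w_m = y$, since both of these are neighbors of $v$. A maximal arc of non-neighbors missing both $w_0$ and $w_m$ must lie entirely inside one of the two open sections $\{w_1, \ldots, w_{m-1}\}$ or $\{w_{m+1}, \ldots, w_{n-1}\}$; consequently $\{p, p+1, \ldots, q\} \subseteq \{0, 1, \ldots, m\}$ or $\{p, \ldots, q\} \subseteq \{m, m+1, \ldots, n-1, 0\}$, which places all of the common edges in exactly one of $P$ and $Q$.

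Finally I would verify that (a) and (b) are mutually exclusive, so that \emph{exactly} one holds. Suppose both held: then $v \in X_C$, while some hole $C^*$ with $v \in V(C^*)$ shares an edge $w_i w_{i+1}$ with $C$. Since $v \in X_C$, both $v w_i$ and $v w_{i+1}$ are edges of $G$; as $v, w_i, w_{i+1}$ all lie on the induced cycle $C^*$, these edges together with $w_i w_{i+1}$ would force a triangle on $C^*$, contradicting that a hole has length at least $4$ and hence no chords. Thus at most one alternative holds, and combined with the construction above, exactly one holds.
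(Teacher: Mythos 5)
Your argument is correct and complete: the maximal run of non-neighbors of $v$ on $C$ (which is nonempty when $v\notin X_C$ and avoids both $x$ and $y$, hence sits inside one $(x,y)$-section) yields an induced cycle $C^*$ of length at least $4$ through $v$ sharing at least two consecutive edges of that section with $C$, and the triangle obstruction shows (a) and (b) exclude each other. The paper itself gives no proof of this lemma --- it is imported verbatim from \cite[Lemma 2.1]{LKKS} --- and your construction is the standard one used there (walking from a non-neighbor of $v$ in both directions to the first neighbors of $v$ is the same device as taking a maximal non-neighbor arc), so nothing further is needed.
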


\begin{Lem}[{\cite[Lemma 2.4]{twoholes}}]\label{avoidingvertex}
Let $G$ be a graph and $C$ be a hole of $G$. 
Suppose that there exists a vertex $v$
such that 
$v$ is adjacent to consecutive vertices $v_i$ and $v_{i+1}$ of $C$,
and that 
$v$ is not in $X_C$ and not on any hole of $G$. 
Then, 
$v$ is not adjacent to any vertex in $V(C) \setminus \{v_i, v_{i+1}\}$.
\end{Lem}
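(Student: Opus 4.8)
The plan is to argue by contradiction and reduce everything to the wheel dichotomy of Lemma~\ref{wheel1}. Write $C = v_0 v_1 \cdots v_{n-1} v_0$ with $n \geq 4$ (a hole has length at least $4$), and let $N = \{ u \in V(C) \mid uv \in E(G) \}$ be the set of neighbors of $v$ lying on $C$. By hypothesis $\{v_i, v_{i+1}\} \subseteq N$, and the goal is to show $N = \{v_i, v_{i+1}\}$. So I would suppose instead that $v$ is adjacent to some $v_j \in V(C) \setminus \{v_i, v_{i+1}\}$, giving three distinct vertices $v_i, v_{i+1}, v_j \in N$.

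The first step is to extract from these three a pair of vertices that are non-adjacent on $C$. Since $C$ is a hole, it is an induced cycle of length at least $4$ and hence triangle-free; in particular $v_j$ cannot be adjacent on $C$ to \emph{both} of the consecutive vertices $v_i$ and $v_{i+1}$, because the only neighbors of $v_j$ on $C$ are $v_{j-1}$ and $v_{j+1}$, and $\{j-1,j+1\} = \{i,i+1\}$ is impossible for $n \geq 4$ (the indices would have to differ by both $2$ and $1$ modulo $n$). Thus at least one of $v_i, v_{i+1}$, call it $x$, is non-adjacent to $y := v_j$ on $C$; and because $C$ is induced, $x$ and $y$ are then non-adjacent in $G$ as well. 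I would also record that $v \notin V(C)$, since $v$ lies on no hole of $G$ while $C$ itself is a hole.

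The second step is simply to invoke Lemma~\ref{wheel1} for the pair $x, y$ with common neighbor $v \notin V(C)$. Its dichotomy yields either \textrm{(a)} $v \in X_C$, or \textrm{(b)} the existence of a hole $C^*$ with $v \in V(C^*)$. Alternative \textrm{(a)} contradicts the hypothesis $v \notin X_C$, and alternative \textrm{(b)} contradicts the hypothesis that $v$ lies on no hole of $G$. Either way we reach a contradiction, so no such $v_j$ exists and $v$ is adjacent to no vertex of $V(C) \setminus \{v_i, v_{i+1}\}$, as claimed.

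As for difficulty: the argument is essentially immediate once Lemma~\ref{wheel1} is available, so I do not expect a genuine obstacle. The only real content is the reduction in the first step that manufactures a non-adjacent pair of neighbors of $v$ on $C$, and that is exactly where triangle-freeness of the hole is used. The point I would check most carefully is the verification that $v_j$ fails to be adjacent on $C$ to at least one of $v_i, v_{i+1}$, since this is the hinge that lets me apply the wheel lemma; the short modular computation showing $\{j-1,j+1\} \neq \{i,i+1\}$ for $n \geq 4$ makes it rigorous.
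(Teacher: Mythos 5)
Your proof is correct. Note that the paper itself does not prove this lemma---it is imported from \cite[Lemma~2.4]{twoholes} without proof---so there is no in-paper argument to compare against; your derivation is a clean and valid one from Lemma~\ref{wheel1}, and the two hinge points (that $v\notin V(C)$ because $v$ lies on no hole, and that $v_j$ cannot be consecutive on $C$ with both $v_i$ and $v_{i+1}$ when the hole has length at least $4$) are both handled properly.
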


%%%%%%%%%%%%%%%%%%%%%%%%%%%%%%%%%%%%%%%%%%%%%%%%%%%%%%%%%%%%%%%%%%%%%%%%
\section{Structure of graphs satisfying the condition {\rm (E1)}}
%%%%%%%%%%%%%%%%%%%%%%%%%%%%%%%%%%%%%%%%%%%%%%%%%%%%%%%%%%%%%%%%%%%%%%%%
\subsection{Properties of graphs satisfying the condition {\rm (E1)}}
%%%%%%%%%%%%%%%%%%%%%%%%%%%%%%%%%%%%%%%%%%%%%%%%%%%%%%%%%%%%%%%%%%%%%%%%

\begin{Lem}\label{lem:K23-free}
Let $G$ be a graph satisfying the condition {\rm (E1)}.
Then $G$ is $K_{2,3}$-free.
\end{Lem}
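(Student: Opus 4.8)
The plan is to argue by contradiction, exploiting the fact that a copy of $K_{2,3}$ already contains several $4$-cycles whose edge sets overlap heavily. Suppose $G$ contains an induced subgraph $H$ isomorphic to $K_{2,3}$, with bipartition classes $\{a,b\}$ and $\{x,y,z\}$, so that $a$ and $b$ are each adjacent to each of $x,y,z$, while $ab \notin E(G)$ and $x,y,z$ are pairwise non-adjacent in $G$ (this is exactly what being an \emph{induced} $K_{2,3}$ guarantees). First I would single out the three $4$-cycles
\[
C_1 = a\,x\,b\,y\,a,\qquad C_2 = a\,x\,b\,z\,a,\qquad C_3 = a\,y\,b\,z\,a,
\]
and check that each is an \emph{induced} cycle of $G$, hence a hole: the only possible chords of $C_1$ are $ab$ and $xy$, both non-edges by the induced $K_{2,3}$ structure, and similarly the potential chords of $C_2$ are $ab,xz$ and of $C_3$ are $ab,yz$, again all absent.

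Next I would compute a pairwise edge intersection. For instance $E(C_1)=\{ax,xb,by,ya\}$ and $E(C_2)=\{ax,xb,bz,za\}$, so $E(C_1)\cap E(C_2)=\{ax,xb\}$ and $|E(C_1)\cap E(C_2)|=2$. Since $C_1$ and $C_2$ are two distinct holes of $G$ sharing two edges, this directly contradicts the condition (E1), and the lemma follows. (One checks in passing that in fact every pair among $C_1,C_2,C_3$ shares two edges, but a single pair already suffices.)

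The argument is short, and the points requiring care are definitional rather than combinatorial, so I do not expect a genuine obstacle. The first thing to pin down is that "$K_{2,3}$-free" must mean forbidding $K_{2,3}$ as an \emph{induced} subgraph; the conclusion is false for arbitrary subgraphs, since, for example, a complete graph is chordal and hence satisfies (E1) vacuously while still containing $K_{2,3}$ as a (non-induced) subgraph. The second thing is to verify that the three $4$-cycles are genuinely chordless in $G$, which is precisely where the "induced" hypothesis on the $K_{2,3}$ is used. Granting these two observations, the contradiction with (E1) is immediate.
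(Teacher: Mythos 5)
Your proof is correct and is essentially identical to the paper's: both take an induced copy of $K_{2,3}$, exhibit two of its $4$-cycles through a common degree-two vertex as holes sharing two edges, and contradict (E1). Your additional remarks on why the cycles are chordless and why ``$K_{2,3}$-free'' must mean induced-subgraph-free are accurate but do not change the argument.
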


\begin{proof}
Suppose that $G$ has an induced subgraph 
$H$ isomorphic to $K_{2,3}$ .
Let $V(H)=\{x_1, x_2, y_1, y_2, y_3\}$
and $E(H)=\{x_i y_j \mid i \in \{1,2\}, j \in \{1,2,3\}\}$.
Then $C_1 := x_1 y_1 x_2 y_2 x_1$ and
$C_2 := x_1 y_1 x_2 y_3 x_1$
are holes having two common edges $x_1y_1$ and $x_2y_1$,
which is a contradiction to the condition (E1).
\end{proof}

\begin{Prop}\label{prop:E1V2}
Let $G$ be a graph satisfying the condition {\rm (E1)}.
Then any two distinct holes of $G$ share at most two vertices.
\end{Prop}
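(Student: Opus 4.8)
The plan is to argue by contradiction. Suppose two distinct holes $C_1$ and $C_2$ share at least three vertices, and set $S := V(C_1) \cap V(C_2)$, so $|S| \ge 3$. The first observation is that every edge of $G$ with both endpoints in $S$ is automatically a common edge of the two holes: if $u,v \in S$ are adjacent in $G$, then since $C_1$ and $C_2$ are induced cycles, $uv$ is an edge of each. Hence, by the condition (E1), there is at most one edge of $G$ inside $S$. Viewing the vertices of $S$ in their cyclic order along $C_2$, the arcs between cyclically consecutive members of $S$ number at least three, and such an arc has length $1$ precisely when its endpoints form an edge of $S$; as there is at most one such edge, at least two of these arcs have length $\ge 2$. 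I would fix one such arc $Q$, with endpoints $x,y \in S$. Then $x \not\sim y$, and because no member of $S$ lies in the interior of $Q$ we have $\mathrm{int}(Q) \cap V(C_1) = \emptyset$.

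Next I would pick any third shared vertex $z \in S \setminus \{x,y\}$. On the hole $C_1$ the two $(x,y)$-sections have interiors partitioning $V(C_1) \setminus \{x,y\}$, so $z$ lies in the interior of one of them; call that section $P$, so $|P| \ge 2$ and $z \in \mathrm{int}(P)$. Since $\mathrm{int}(P) \subseteq V(C_1)$ while $\mathrm{int}(Q) \cap V(C_1) = \emptyset$, the paths $P$ and $Q$ meet only at $x$ and $y$, and therefore $D := P \cup Q^{-1}$ is a cycle of length $\ge 4$ that shares with $C_1$ exactly the edges of $P$.

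The crux is to analyse the edges of $G$ inside $V(D) = \mathrm{int}(P) \cup \mathrm{int}(Q) \cup \{x,y\}$. Because $C_1$ and $C_2$ are induced, $G[V(P)]$ and $G[V(Q)]$ are the bare paths $P$ and $Q$, and no chord of $D$ can be incident to $x$ or $y$ (again since $C_1,C_2$ are induced and $x \not\sim y$); hence every chord of $D$ joins $\mathrm{int}(P)$ to $\mathrm{int}(Q)$. In particular I would check that the shared vertex $z$ has no chord at all: a neighbour $q$ of $z$ lying in $\mathrm{int}(Q)$ would make $z$ and $q$ consecutive on the induced cycle $C_2$, forcing $z \in V(Q)$, which is false. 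Thus in $G[V(D)]$ the only neighbours of $z$ are its two neighbours $z^{-},z^{+}$ along $P$, and these are non-adjacent, being at distance two on the induced cycle $C_1$. Now I would apply Lemma \ref{lem:triorhole} to $D$: if $D$ is induced it is itself a hole carrying the two $C_1$-edges $z^{-}z$ and $zz^{+}$; otherwise $D$ decomposes into induced cycles, and the one containing $z^{-}z$ must also contain $zz^{+}$ (since $z$ has degree two in $G[V(D)]$), and it cannot be a triangle because $z^{-}\not\sim z^{+}$. Either way we obtain a hole sharing at least the two edges $z^{-}z,\,zz^{+}$ with $C_1$, contradicting (E1).

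I expect the main obstacle to be the chord analysis in this last step. Everything hinges on two set-up facts: that the chosen arc $Q$ has its interior off $C_1$ (which is exactly why $x,y$ are taken to be consecutive members of $S$ on $C_2$ across a long arc), and that the third shared vertex $z$, sitting on $C_2$, cannot acquire a chord into $\mathrm{int}(Q)$. Once these are secured, $z$ acts as a degree-two ``protected'' vertex of $D$ whose two incident $C_1$-edges are forced to appear together in a single hole, and Lemma \ref{lem:triorhole} is what converts the possibly non-induced cycle $D$ into that hole; care is needed to rule out the triangle case, which is precisely where the chordlessness of $C_1$ is used.
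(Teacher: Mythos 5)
Your argument is correct, and at its core it produces the same contradiction as the paper's proof: the third shared vertex ($z$ for you, $w$ in the paper) is shown to have degree two in an auxiliary subgraph assembled from the section of $C_1$ through it and a detour along $C_2$ that avoids it, so the induced cycle picking up the edge $z^{-}z$ is forced to pick up $zz^{+}$ as well, yielding a hole that shares two edges of $C_1$ at $z$. The routes differ in the machinery. The paper takes as its detour the entire $(u,v)$-section of $C_2$ not containing $w$, passes to the induced subgraph $H$ on the union of the two sections, observes that $w$ has degree $2$ there, and extracts the offending hole in one stroke as the edge $wa$, a \emph{shortest} $(a,b)$-path in $H$, and the edge $bw$ --- the shortest-path device is what guarantees an induced cycle without any further case analysis. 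You instead choose the detour $Q$ more delicately, as an arc of $C_2$ between cyclically consecutive vertices of $S=V(C_1)\cap V(C_2)$ having length at least $2$ (using (E1) to bound the number of length-one arcs by one), which buys you that $\mathrm{int}(Q)\cap V(C_1)=\emptyset$ and hence that $D=P\cup Q^{-1}$ is a genuine cycle with all chords running between $\mathrm{int}(P)$ and $\mathrm{int}(Q)$; you then pay for the fact that $D$ need not be induced by invoking Lemma \ref{lem:triorhole} to decompose it into induced cycles and locating the one through $z^{-}z$. Both approaches are sound; yours is somewhat longer but makes the internal disjointness of the two paths and the location of possible chords fully explicit, where the paper buries these issues in the shortest-path step. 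One line worth adding to yours: before (E1) gives a contradiction you must note that the extracted hole is distinct from $C_1$; this is immediate in your setup because $V(C_1)\not\subseteq V(D)$ (the other $(x,y)$-section of $C_1$ has nonempty interior since $x\not\sim y$, and that interior is disjoint from $V(P)\cup\mathrm{int}(Q)$), but you do not say it.
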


\begin{proof}
By contradiction.
Suppose that there exist two distinct holes $C_1$ and $C_2$ in $G$
which share at least three vertices.
Let $u$, $v$ and $w$ be three distinct common vertices of $C_1$ and $C_2$.
Then they do not induce a triangle in $G$ since $C_1$ and $C_2$ are holes.
Without loss of generality, we may assume that $u$ and $v$ are not adjacent.
Let $P_1$ be the $(u,v)$-section of $C_1$ containing $w$
and let $P_2$ be the $(u,v)$-section of $C_2$ not containing $w$. 
(See Figure \ref{fig-2-2}.) 

Now we consider the subgraph $H$ of $G$ induced by $V(P_1) \cup V(P_2)$. 
Since $C_1 \neq C_2$, $P_1$ cannot be the other $(u,v)$-section of $C_2$ 
and $P_2$ cannot be the other section of $C_1$. 
Thus $H$ is distinct from $C_1$ and $C_2$. 
If $w$ is adjacent to an internal vertex in $P_2$,
then the edge is a chord of $C_2$ and we reach a contradiction.
Thus $w$ has degree $2$ in $H$. 
Since $w$ is an internal vertex of $P_1$, 
$w$ has its neighbors which are also on $P_1$. 
Let $a$ be a neighbor of $w$ closer to $u$ on $P_1$ 
and $b$ be the other neighbor of $w$. 
Then the $(a,u)$-section of $P_1$, $P_2$, and the $(v,b)$-section of $P_1$ 
form an $(a,b)$-walk in $H$ not containing $w$.
Let $P$ be a shortest $(a,b)$-path in $H$. 
Then, the edge $wa$, the $(a,b)$-path $P$, and the edge $bw$ 
form a cycle $C$. 
Since $H$ is an induced subgraph of $G$, 
$P$ is a shortest $(a,b)$-path in $G$.
Therefore the cycle $C$ is a hole in $G$.
Since $C$ is also a hole in $H$,
$C$ is distinct from the hole $C_1$. 
Now we reach a contradiction since 
the holes $C$ and $C_1$ share the two edges $wa$ and $wb$. 
\end{proof}

\begin{figure}[h]
\psfrag{a}{$a$}
\psfrag{b}{$b$}
\psfrag{u}{$u$}
\psfrag{v}{$v$}
\psfrag{w}{$w$}
\psfrag{P}{$P$}
\psfrag{C}{$C$}
\psfrag{H}{$H$}
\psfrag{P1}{$P_1$}
\psfrag{C1}{$C_{1}$}
\psfrag{P2}{$P_2$}
\psfrag{C2}{$C_{2}$}
\begin{center}
\includegraphics[scale=0.7]{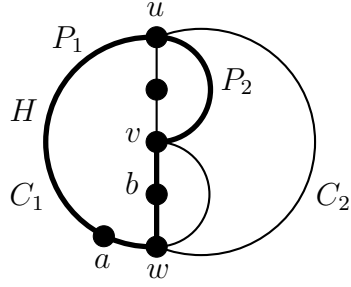}
\caption{A picture for Proof of Proposition \ref{prop:E1V2}}
\label{fig-2-2}
\end{center}
\end{figure}

%%%%%%%%%%%%%%%%%%%%%%%%%%%%%%%%%%%%%%%%%%%%%%%%%%%%%%%%%%%%%%%%%%%%%%%%%%%
\subsection{Properties of $X_C$ and $X_K$}
%%%%%%%%%%%%%%%%%%%%%%%%%%%%%%%%%%%%%%%%%%%%%%%%%%%%%%%%%%%%%%%%%%%%%%%%%%%

\begin{Lem}\label{wheel}
Let $G$ be a graph satisfying the condition {\rm (E1)}
and $C$ be a hole of $G$.
Let $x$ and $y$ be two non-adjacent vertices on $C$.
If there exists a common neighbor $v$ of $x$ and $y$ not on the hole $C$, 
then $v \in X_C$.
\end{Lem}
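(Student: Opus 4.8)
The plan is to invoke the structural dichotomy already recorded in Lemma~\ref{wheel1} and then eliminate its second alternative using the hypothesis (E1). First I would apply Lemma~\ref{wheel1} to the hole $C$, the non-adjacent pair $x,y \in V(C)$, and their common neighbor $v \notin V(C)$. That lemma asserts that exactly one of two situations occurs: either (a) $v \in X_C$, which is precisely the conclusion we want, or (b) there exists a hole $C^*$ with $v \in V(C^*)$ and $|E(C) \cap E(C^*)| \geq 2$.

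In case (a) there is nothing further to do, so the substance of the argument is to show that case (b) cannot happen under (E1). The key observation here is that the hole $C^*$ produced in alternative (b) must be distinct from $C$: since $v \in V(C^*)$ while $v$ is by hypothesis not a vertex of $C$, we have $C^* \neq C$. Thus $C$ and $C^*$ are two \emph{distinct} holes of $G$ with $|E(C) \cap E(C^*)| \geq 2$, which directly contradicts the condition (E1). Consequently alternative (b) is impossible, and Lemma~\ref{wheel1} forces (a), giving $v \in X_C$ as desired.

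I expect no real obstacle in this proof: all the combinatorial work of constructing the witnessing hole $C^*$ in the non-trivial case has already been packaged into the borrowed Lemma~\ref{wheel1}. The only point that genuinely needs checking is the one-line verification that $C^*$ and $C$ are different holes, which is exactly where the assumption $v \notin V(C)$ is used; without that hypothesis the shared-edge count in (b) could not be converted into a violation of (E1).
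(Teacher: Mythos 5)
Your proof is correct and follows essentially the same route as the paper: invoke Lemma~\ref{wheel1} and rule out alternative (b) via condition (E1). Your explicit check that $C^* \neq C$ (using $v \notin V(C)$) is a small but welcome refinement that the paper's one-line proof leaves implicit.
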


\begin{proof}
Since $G$ satisfies the condition {\rm (E1)},
Lemma~\ref{wheel1} (b) cannot happen
and thus the lemma holds.
\end{proof}

\begin{Lem}\label{lem:clique5}
Let $G$ be a graph satisfying the condition {\rm (E1)}
and $C$ be a hole of length at least $5$ in $G$.
Then $X_C$ is a clique.
\end{Lem}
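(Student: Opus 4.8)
The plan is to argue by contradiction: suppose $X_C$ is not a clique, so there are two vertices $u, v \in X_C$ with $uv \notin E(G)$, and from this I would derive a violation of (E1). Write the hole as $C = c_0 c_1 \cdots c_{n-1} c_0$ with $n = |V(C)| \geq 5$. By the definition of $X_C$, both $u$ and $v$ are adjacent to every $c_i$. The idea is to route two short cycles through the ``diagonal'' non-edge $uv$, using $u$ and $v$ together with two vertices of $C$, and to arrange that these two cycles share the two edges incident to a common vertex of $C$.

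Concretely, I would fix $c_0$ and choose two further vertices of $C$ that are both non-adjacent to $c_0$. Since $C$ is chordless, $c_0$ is adjacent only to $c_1$ and $c_{n-1}$, so its non-neighbours on $C$ are exactly $c_2, \ldots, c_{n-2}$; take $c_2$ and $c_3$. I then set $C_1 := u\, c_0\, v\, c_2\, u$ and $C_2 := u\, c_0\, v\, c_3\, u$. Each is a $4$-cycle whose four edges are present because $u, v \in X_C$. Each is moreover \emph{induced}: the only possible chords are $uv$, excluded by assumption, and $c_0 c_2$ (resp. $c_0 c_3$), excluded because $C$ is chordless. Hence $C_1$ and $C_2$ are holes; they are distinct because $c_2 \neq c_3$, yet they share the two edges $u c_0$ and $c_0 v$. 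This yields $|E(C_1) \cap E(C_2)| \geq 2$, contradicting (E1), and the contradiction completes the proof.

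The step I expect to be the crux is guaranteeing that $c_0$ has \emph{two} distinct non-neighbours $c_2, c_3$ on $C$, since this is exactly where the hypothesis $n \geq 5$ enters: for $n \geq 5$ the set $\{c_2, \ldots, c_{n-2}\}$ has at least two elements, and in particular $c_3$ is genuinely non-adjacent to $c_0$. For $n = 4$ one would instead have $c_3 = c_{n-1}$ adjacent to $c_0$, leaving only the single non-neighbour $c_2$ available, so the construction collapses — which is consistent with the lemma being stated only for length at least $5$. I would also verify that the chordlessness of $C$ gives all the required non-adjacencies ($c_0 c_2, c_0 c_3 \notin E(G)$) and that the remaining pairs among $\{u,v,c_0,c_2\}$ and $\{u,v,c_0,c_3\}$ are precisely the cycle edges, so no spurious chord can appear. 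Note that Lemma~\ref{wheel} is not needed for this argument: it uses only the chordlessness of $C$ and condition (E1) directly.
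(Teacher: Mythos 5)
Your proof is correct and is essentially identical to the paper's: the paper also takes two non-adjacent vertices $x_1,x_2\in X_C$ and forms the two $4$-cycles $x_1v_0x_2v_2x_1$ and $x_1v_0x_2v_3x_1$, which are distinct holes sharing the two edges at $v_0$, contradicting (E1). Your additional checks (chordlessness of the $4$-cycles, and that $m\geq 5$ is what guarantees two non-neighbours $v_2,v_3$ of $v_0$) are exactly the details the paper leaves implicit.
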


\begin{proof}
By contradiction. Suppose that there are two non-adjacent vertices $x_1$ and $x_2$ in $X_C$.
Let $v_0 v_1 \cdots v_{m-1} v_0$ be the sequence of the vertices of the hole $C$ 
where $m \geq 5$.
Then $C_{(1)}:=x_1 v_0 x_2 v_2 x_1$ and
$C_{(2)}:= x_1 v_0 x_2 v_3 x_1$ are distinct holes of $G$
sharing the two edges $x_1 v_0$ and $x_2 v_0$,
which is a contradiction.
\end{proof}

We denote by $K_2^m$ a complete multipartite with $m$ parts
each of which has size $2$.
If $m=3$, then we denote $K_2^3$ also by $K_{2,2,2}$. 
We say that a graph is {\em $K_{2,2,2}$-free} if it does not
contain a complete tripartite graph $K_{2,2,2}$
as an induced subgraph.

\begin{Thm}\label{thm:strXC}
Let $G$ be a graph satisfying the condition {\rm (E1)}.
For any hole $C$ in $G$,
exactly one of the following holds:
\begin{itemize}
\item[{\rm (a)}]
$X_C$ is a clique.
\item[{\rm (b)}]
$C$ is contained in an induced subgraph of $G$ which is isomorphic to
$K_{2,2,2}$.
\end{itemize}
\end{Thm}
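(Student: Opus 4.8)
The plan is to split on the length of the hole $C$, which by definition is at least $4$. If $|V(C)| \geq 5$, then Lemma~\ref{lem:clique5} already guarantees that $X_C$ is a clique, so alternative (a) holds and there is nothing more to prove. Hence the entire argument reduces to the single remaining case in which $C$ is a $4$-cycle, say $C = v_0 v_1 v_2 v_3 v_0$ with $v_0 v_2, v_1 v_3 \notin E(G)$.

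In that case I would argue according to whether $X_C$ is a clique. If it is, (a) holds. If it is not, I pick two non-adjacent vertices $x_1, x_2 \in X_C$; by the definition of $X_C$ each $x_i$ is adjacent to all of $v_0, v_1, v_2, v_3$. I then claim that the six vertices $V(C) \cup \{x_1, x_2\}$ induce a copy of $K_{2,2,2}$ with parts $\{v_0, v_2\}$, $\{v_1, v_3\}$, and $\{x_1, x_2\}$. The only non-edges required are $v_0 v_2, v_1 v_3 \notin E(G)$ (from $C$ being an induced $4$-cycle) together with $x_1 x_2 \notin E(G)$ (the standing assumption); every other pair among the six vertices lies in two distinct parts and is indeed an edge, the four cycle edges coming from $C$ and the eight edges to $x_1, x_2$ coming from $x_1, x_2 \in X_C$. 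Thus the induced subgraph is exactly $K_{2,2,2}$ and contains $C$, so (b) holds.

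To obtain the word \emph{exactly}, I would finally check that (a) and (b) cannot hold simultaneously. If (b) holds, then $C$ is a hole lying in an induced $K_{2,2,2}$; since every hole of $K_{2,2,2}$ is a $4$-cycle spanning two of its three parts, the two vertices of the remaining part are non-adjacent (same part) yet adjacent to every vertex of $C$, hence they belong to $X_C$ and show that $X_C$ is not a clique. So (b) forces the failure of (a), and together with the previous paragraph this yields that precisely one of (a), (b) holds.

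I do not expect a real obstacle: the heart of the matter is simply that a non-clique $X_C$ supplies two independent common neighbours which, against a $4$-hole, already assemble into $K_{2,2,2}$. The only points needing a little care are the reduction to $|V(C)| = 4$ (so that the three independent pairs all have size $2$, as $K_{2,2,2}$ demands), the routine verification that the exhibited six-vertex subgraph carries no extra edges, and the observation about the holes of $K_{2,2,2}$ used for exclusivity. Condition (E1) enters the proof essentially only through Lemma~\ref{lem:clique5} in the long-hole case; the short-hole construction is a direct exhibition.
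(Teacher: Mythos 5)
Your proof is correct and follows essentially the same route as the paper's: the key step in both is that Lemma~\ref{lem:clique5} forces $|V(C)|=4$ whenever $X_C$ fails to be a clique, after which two non-adjacent vertices of $X_C$ together with $V(C)$ visibly induce $K_{2,2,2}$, and exclusivity is obtained in both by noting that an induced $K_{2,2,2}$ containing $C$ exhibits two non-adjacent vertices of $X_C$. Your write-up merely spells out the edge/non-edge verification that the paper leaves implicit.
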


\begin{proof}
Suppose that (a) does not hold.
Then there are two non-adjacent vertices $x_1$ and $x_2$ in $X_C$.
By Lemma \ref{lem:clique5}, we have $|V(C)|=4$.
Therefore $V(C) \cup \{x_1,x_2\}$ induces $K_{2,2,2}$ and
thus (b) holds.
If (b) holds, then $|V(C)|=4$ and we can easily see 
that there are two non-adjacent vertices 
which are adjacent to all the vertices of $C$. Thus $(a)$ does not hold.
\end{proof}

\begin{Cor}\label{cor:clique4}
Let $G$ be a $K_{2,2,2}$-free graph satisfying the condition {\rm (E1)}
and $C$ be a hole in $G$.
Then $X_C$ is a clique.
\end{Cor}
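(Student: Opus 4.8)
The statement to prove is Corollary~\ref{cor:clique4}: if $G$ is a $K_{2,2,2}$-free graph satisfying (E1) and $C$ is a hole in $G$, then $X_C$ is a clique.

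\medskip

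The plan is to derive this immediately from Theorem~\ref{thm:strXC}, which has just been established. That theorem asserts that for any hole $C$ in a graph satisfying (E1), exactly one of two mutually exclusive alternatives holds: either (a) $X_C$ is a clique, or (b) $C$ sits inside an induced subgraph isomorphic to $K_{2,2,2}$. My argument will simply rule out alternative (b) using the additional hypothesis that $G$ is $K_{2,2,2}$-free, leaving (a) as the only possibility.

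\medskip

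Concretely, first I would invoke the hypothesis that $G$ satisfies (E1), so that Theorem~\ref{thm:strXC} applies to the given hole $C$. This gives the dichotomy: exactly one of (a) and (b) holds. Next, I would observe that alternative (b) cannot occur: it would require $C$ to be contained in an induced subgraph of $G$ isomorphic to $K_{2,2,2}$, but by definition a $K_{2,2,2}$-free graph contains no such induced subgraph. Therefore (b) is impossible. Since Theorem~\ref{thm:strXC} guarantees that one of the two alternatives must hold, alternative (a) holds, i.e., $X_C$ is a clique, which is exactly the conclusion sought.

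\medskip

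I do not anticipate any real obstacle here, since the corollary is a direct specialization of the preceding theorem under one extra hypothesis; the entire content of the proof lies in noting that the $K_{2,2,2}$-free condition negates branch (b) of the dichotomy. The only point requiring the slightest care is to make sure the two cases of Theorem~\ref{thm:strXC} are genuinely exhaustive (which the word ``exactly one'' in its statement supplies), so that eliminating (b) forces (a) rather than leaving open a third possibility.
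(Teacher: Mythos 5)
Your proposal is correct and follows exactly the same route as the paper, which proves the corollary by noting that it is immediate from Theorem~\ref{thm:strXC}: the $K_{2,2,2}$-free hypothesis eliminates alternative (b), so alternative (a), namely that $X_C$ is a clique, must hold. No gaps.
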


\begin{proof}
It immediately follows from Theorem~\ref{thm:strXC}.
\end{proof}

For a vertex $v$ in a graph $G$, 
we denote by $N_G(v)$ the set of vertices adjacent to $v$ in $G$. 
We denote the set $N_G(v)\cup \{v\}$ by $N_G[v]$. 

For an induced subgraph $K$ of a graph $G$ 
isomorphic to $K_2^m$ for some $m \geq 2$, 
we denote by $X_K$ the set
of vertices which are adjacent to all the vertices of $K$:
\begin{equation}\label{eq:XK}
X_K:=\{v \in V(G) \mid
uv \in E(G) \text{ {\rm for all }} u \in V(K) \}.
\end{equation}

\begin{Lem}\label{multi}
Let $G$ be a graph satisfying the condition {\rm (E1)}.
Let $m$ be the largest integer
such that $G$ has an induced subgraph $K$ isomorphic to
$K_2^m$. 
If $m \geq 2$, then the following hold:
\begin{itemize}
\item[{\rm (1)}]
$X_K$ is a clique, 
\item[{\rm (2)}]
For two non-adjacent vertices $u$, $v$ in $K$, 
$N_G(u) \cap N_G(v) \subseteq X_K \cup V(K)$. 
\end{itemize}
\end{Lem}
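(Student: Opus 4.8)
The plan is to exploit two features of the maximal multipartite subgraph $K$: the maximality of $m$ for part (1), and the abundance of induced $4$-holes sitting inside $K$ (together with Lemma~\ref{wheel}) for part (2). Write $V(K)=\{a_1,b_1,\ldots,a_m,b_m\}$, where $\{a_j,b_j\}$ is the $j$-th part, so that within $K$ the only non-adjacent pairs are the $\{a_j,b_j\}$. I would first record that $X_K\cap V(K)=\emptyset$, since each vertex of $K$ fails to be adjacent to its own partner and hence is not adjacent to all of $V(K)$.

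For part (1), I would argue by contradiction using maximality. Suppose $x_1,x_2\in X_K$ are non-adjacent. Since $x_1,x_2\notin V(K)$ and each is adjacent to every vertex of $K$, the set $V(K)\cup\{x_1,x_2\}$ induces a graph whose parts are the $m$ original parts together with the new non-adjacent pair $\{x_1,x_2\}$; that is, an induced $K_2^{m+1}$. This contradicts the choice of $m$ as the largest integer for which $G$ has an induced $K_2^m$, so $X_K$ must be a clique.

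For part (2), fix a part $\{u,v\}=\{a_i,b_i\}$ and take $w\in N_G(u)\cap N_G(v)$. If $w\in V(K)$ there is nothing to prove, so I assume $w\notin V(K)$ and must show $w$ is adjacent to every vertex of $K$. The key observation is that for each $j\neq i$ the four vertices $\{a_i,b_i,a_j,b_j\}$ induce in $K$ a $4$-cycle $C_{ij}=a_i a_j b_i b_j a_i$ (the two non-edges being the diagonals $a_ib_i$ and $a_jb_j$), and since $K$ is an induced subgraph of $G$, $C_{ij}$ is a genuine hole of $G$. On this hole $u=a_i$ and $v=b_i$ are non-adjacent, and $w$ is a common neighbor of them not lying on $C_{ij}$ (because $w\notin V(K)$). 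Applying Lemma~\ref{wheel} --- this is exactly where the condition (E1) is used --- yields $w\in X_{C_{ij}}$, so $w$ is adjacent to $a_j$ and $b_j$. Letting $j$ range over all indices different from $i$, and recalling that $w$ is already adjacent to $a_i$ and $b_i$, shows $w$ is adjacent to all of $V(K)$, i.e.\ $w\in X_K$.

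The argument is short, and the points that need attention are bookkeeping rather than a genuine obstacle: one must verify that each $C_{ij}$ really is an induced $4$-cycle of $G$ (which holds because $K$ is induced in $G$ and the induced subgraph on $\{a_i,b_i,a_j,b_j\}$ in $K_2^m$ is precisely $C_4$), and that the hypothesis $m\geq 2$ guarantees a second part, so that the holes $C_{ij}$ collectively exhaust $V(K)\setminus\{a_i,b_i\}$. The main conceptual step is recognizing that the diagonal pair $\{a_i,b_i\}$ of each such $4$-hole is exactly the non-adjacent pair to which Lemma~\ref{wheel} applies, which immediately upgrades a single common neighbor into a vertex of $X_K$.
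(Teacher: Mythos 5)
Your proof is correct. Part (1) is word-for-word the paper's argument: two non-adjacent vertices of $X_K$ together with $V(K)$ would induce $K_2^{m+1}$, contradicting the maximality of $m$. For part (2) you take a slightly different route. The paper fixes an arbitrary $x\in V(K)\setminus\{u,v\}$, lets $y$ be its partner in $K$, and shows directly that if $wx\notin E(G)$ then $uwvxu$ and $uyvxu$ are two holes sharing the two edges $ux$ and $vx$, violating (E1). You instead observe that each part $\{a_j,b_j\}$ with $j\neq i$ completes $\{u,v\}$ to a genuine $4$-hole $C_{ij}$ of $G$ and apply Lemma~\ref{wheel} to conclude $w\in X_{C_{ij}}$, then let $j$ vary. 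The two arguments are really the same mechanism packaged differently: the paper's pair of holes is in effect an inline re-proof of the special case of Lemma~\ref{wheel} you invoke, so your version is a legitimate shortcut that reuses an already-established lemma, at the cost of needing the small bookkeeping check (which you supply) that the $C_{ij}$ are induced in $G$ and collectively exhaust $V(K)\setminus\{u,v\}$. Both proofs use $m\geq 2$ in the same place, to guarantee a second part exists.
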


\begin{proof}
We show (1) by contradiction. 
Suppose that there exist two nonadjacent vertices $x_1$ and $x_2$ in $X_K$. 
Then $V(K) \cup \{x_1,x_2\}$ induces a subgraph isomorphic to $K_2^{m+1}$, 
which contradicts the choice of $m$. 

Now we show (2). Let $u$ and $v$ be two non-adjacent vertices of $K$. 
If $N_G(u) \cap N_G(v) \subseteq V(K)$, 
then (2) holds and so we assume that 
$(N_G(u) \cap N_G(v)) \setminus V(K) \neq \emptyset$. 
Take a vertex $w \in (N_G(u) \cap N_G(v)) \setminus V(K)$. 
To show that $w\in X_K$, take any vertex $x$ of $K$. 
If $x \in \{u, v \}$, then $w$ is adjacent to $x$. 
Now we assume that $x \in V(K) \setminus \{u, v\}$. 
By the definition of $K$, 
$x$ is adjacent to both $u$ and $v$. 
Since $m \geq 2$, 
there exists a vertex $y$ of $K$ which is not adjacent to $x$. 
If $w$ is not adjacent to $x$, then $C_{(1)}:=uwvxu$ 
and $C_{(2)}:=uyvxu$ are two distinct holes 
sharing the two edges $ux$ and $vx$, 
which is a contradiction.
Thus the vertex $w$ is adjacent to $x$.
Since $x$ is chosen arbitrarily from $V(K)$, 
it holds that $w \in X_K$.
Hence we have
$(N_G(u) \cap N_G(v)) \setminus V(K) \subseteq X_K$, 
and thus 
$N_G(u) \cap N_G(v) \subseteq X_K \cup V(K)$.
\end{proof}

%%%%%%%%%%%%%%%%%%%%%%%%%%%%%%%%%%%%%%%%%%%%%%%%%%%%%%%%%%%%%%%%%%%%%%%%
\subsection{Properties of $C$-avoiding paths
for a hole $C$ of length at least $5$}
%%%%%%%%%%%%%%%%%%%%%%%%%%%%%%%%%%%%%%%%%%%%%%%%%%%%%%%%%%%%%%%%%%%%%%%%%%

\begin{Lem}\label{newedgedisjoint}
Let $G$ be a graph satisfying the condition {\rm (E1)}
and $C$ be a hole of length at least $5$ in $G$.
Then there is no $C$-avoiding path 
between two non-adjacent vertices of $C$.
\end{Lem}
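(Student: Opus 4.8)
The plan is to argue by contradiction through a minimal counterexample. Suppose some pair of non-adjacent vertices $x,y$ of $C$ is joined by a $C$-avoiding path; among all such pairs and all joining $C$-avoiding paths I would choose $x$, $y$, $P$ so that the length of the shorter $(x,y)$-section of $C$ is as small as possible and, subject to that, the length $k$ of $P$ is as small as possible. Since $x$ and $y$ are non-adjacent we have $k\ge 2$, both $(x,y)$-sections of $C$ have length at least $2$, and because $|V(C)|\ge 5$ at least one section has length at least $3$. The case $k=2$ is dispatched immediately: the single internal vertex of $P$ is then a common neighbor of $x$ and $y$ lying off $C$, so Lemma~\ref{wheel} forces it into $X_C$, contradicting that no internal vertex of a $C$-avoiding path lies in $V(C)\cup X_C$. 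Hence $k\ge 3$.

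Next I would extract the local structure forced by minimality together with the condition (E1). First, $P$ is an induced path, since a chord would yield a strictly shorter $C$-avoiding path between the same endpoints. Second, for each internal vertex $u_i$ the set of neighbors of $u_i$ on $C$ is a clique — hence at most two consecutive vertices — because two non-adjacent $C$-neighbors of $u_i$ would place $u_i\in X_C$ by Lemma~\ref{wheel}. Minimality of $k$ then pins down where such chords can occur: if an internal vertex $u_i$ is adjacent to a vertex $w\in V(C)$ non-adjacent to $x$ on $C$, then $xu_1\cdots u_iw$ is a $C$-avoiding path between the non-adjacent pair $x,w$, forcing $i=k-1$; symmetrically, adjacency to a vertex non-adjacent to $y$ forces $i=1$. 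Consequently, for any fixed section $S=w_0w_1\cdots w_p$ with $w_0=x$ and $w_p=y$, the only possible chords between the interior of $P$ and the interior of $S$ are $u_1w_1$ and $u_{k-1}w_{p-1}$; in particular no internal vertex of $P$ attaches to a vertex $w_2,\dots,w_{p-2}$ that is non-adjacent to both $x$ and $y$.

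The contradiction is then produced by building a second hole. Taking $S$ to be a section of length $p\ge 3$, I would form the cycle $Q=P+S$ and, where the corner chords $u_1w_1$ or $u_{k-1}w_{p-1}$ are present, cut off the corners $x,y$ to obtain a subcycle $R$ that still carries the "middle" edges of $S$. Since $C$ is induced there is no chord across these middle edges, so an induced-cycle extraction inside $G[V(R)]$ (mimicking the shortest-path argument of Proposition~\ref{prop:E1V2}, or invoking Lemma~\ref{lem:triorhole}) yields a genuine hole $C'$ retaining at least two consecutive edges of $C$; a triangle cannot contain two edges of the induced cycle $C$, so these edges survive, and $C'\ne C$ because $C'$ omits the interior of the other $(x,y)$-section. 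Then $|E(C)\cap E(C')|\ge 2$ contradicts the condition (E1).

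The main obstacle is exactly the bookkeeping of these corner chords and the borderline configuration in which $S$ has length precisely $3$ and \emph{both} corner chords are present, for then $R$ retains only a single edge of $C$. I expect to resolve this using the first (innermost) minimality criterion: the corner chords produce a fresh pair of non-adjacent vertices of $C$ at $C$-distance $2$ joined by a $C$-avoiding path that is no longer than $P$ and sometimes strictly shorter, which — after the relevant forbidden adjacencies are ruled out by Lemma~\ref{wheel} — contradicts the minimal choice of $x,y,P$. Checking that this reduction always fires, in particular when $|V(C)|=5$, is the delicate part of the argument.
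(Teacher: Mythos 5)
There is a genuine gap, and it sits exactly where you flag it. Two problems. First, your lexicographic minimality (shortest $(x,y)$-section of $C$ first, then length of $P$) does not actually support the localization of chords that you claim. From a chord $u_iw$ with $w$ non-adjacent to $x$ you obtain a $C$-avoiding path between $x$ and $w$ of length $i+1$, but this only contradicts your choice when the shorter $(x,w)$-section of $C$ has the \emph{same} length as the shorter $(x,y)$-section; if it is strictly longer, the new pair is not ``better'' in your ordering and no bound on $i$ follows. (Minimizing the path length first, over all non-adjacent pairs, would repair this particular step, but then the section-length criterion you plan to invoke later is no longer available.) Second, and more seriously, the borderline configuration you identify --- a section of length $3$ with both corner chords $u_1w_1$ and $u_{k-1}w_{p-1}$ present, in particular when $|V(C)|=5$ --- is the actual crux of the lemma, and your proposed escape does not close it: the ``fresh pair'' $(w_1,y)$ produced by the corner chords is again at $C$-distance $2$ and is joined by a $C$-avoiding path of the \emph{same} length $k$, so it need not beat $(x,y,P)$ in your ordering; the reduction can simply reproduce the same configuration. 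Your hole-building step also needs the other $(x,y)$-section to stay chord-free against $P$ to force a contradiction there, which your step 4 does not give you for vertices of $C$ adjacent to both $x$ and $y$ (e.g.\ the middle vertex of a length-$2$ section).

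The paper avoids all of this by a sharper local move: it fixes the pair $(v_i,v_j)$, takes a shortest $C$-avoiding path $P$ between them, and forms the two cycles $P\cup P_1$ and $P\cup P_2$ with \emph{both} $(v_i,v_j)$-sections of $C$. Neither cycle can be a hole (each would share at least two edges with $C$), so each has a chord running from an internal vertex of $P$ to an internal vertex of the corresponding section. Taking the first such chord on each side and using (E1) to pin it down, one finds that the \emph{first} internal vertex of $P$ is adjacent to both $v_{i-1}$ and $v_{i+1}$; since these are non-adjacent, Lemma~\ref{wheel} places that vertex in $X_C$, contradicting that $P$ is $C$-avoiding. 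In other words, the corner chords that defeat your construction are precisely what the paper derives and then kills with Lemma~\ref{wheel}. To salvage your approach you would need an argument of exactly this type for the short-section/both-corners case, so you should restructure the proof around the neighborhood of the first internal vertex of $P$ rather than around extracting a second hole through the middle of a section.
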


\begin{proof} 
Let $C=v_0v_1\cdots v_{m-1}v_0$ be a hole of length at least $5$ in $G$, 
where $m \geq 5$. 
Suppose that there is a $C$-avoiding
$(v_i,v_j)$-path for some $i, j \in \{0,1,\ldots, m-1\}$
satisfying $|i-j| \ge 2$. 
Let $P$ be a shortest path among all the $C$-avoiding $(v_i,v_j)$-paths in $G$.
Then there is no edge joining two non-consecutive vertices on $P$.
Let $P_1$ and $P_2$ be the two $(v_i,v_j)$-sections of $C$
containing $v_{i-1}$ and $v_{i+1}$, respectively.
Then $P$ and $P_1$ form a cycle $C_{(1)}$
and $P$ and $P_2$ form a cycle $C_{(2)}$ in $G$.
Since both $C_{(1)}$ and $C_{(2)}$ share at least two edges with the hole $C$,
these cycles cannot be holes of $G$.
Since $C_{(1)}$ has a chord,
an internal vertex of $P$ is adjacent to an internal vertex on $P_1$.
Let $u$ be the first internal vertex on $P$
which is adjacent to an internal vertex on $P_1$.
Then let $v$ be the first internal vertex on $P_1$
which is adjacent to $u$. 
(See Figure \ref{fig-2-8}.) 
Then the $(v_i,u)$-section of $P$,
the edge $uv$, and the $(v,v_i)$-section of $P_1^{-1}$
form a triangle or a hole. 
In either case, it shares the edge $v_iv_{i-1}$ with $C$. 
Thus, by the condition (E1), $v=v_{i-1}$ and 
$u$ is the vertex immediately following $v_i$ on $P$. 
By applying a similar argument for $P_2$, we can show
that $u$ is adjacent to $v_{i+1}$.
Therefore, by Lemma~\ref{wheel},  we have $u \in X_C$.
However, since $P$ is a $C$-avoiding path,
$u$ does not belong to $X_C$
and thus we reach a contradiction.
\end{proof}

\begin{figure}[h]
\psfrag{vi}{$v_i$}
\psfrag{vj}{$v_j$}
\psfrag{v-}{$v_{i-1}$}
\psfrag{v+}{$v_{i+1}$}
\psfrag{u}{$u$}
\psfrag{v}{$v$}
\psfrag{P}{$P$}
\psfrag{C}{$C$}
\psfrag{P1}{$P_1$}
\psfrag{C1}{$C_{(1)}$}
\psfrag{P2}{$P_2$}
\psfrag{C2}{$C_{(2)}$}
\begin{center}
\includegraphics[scale=0.7]{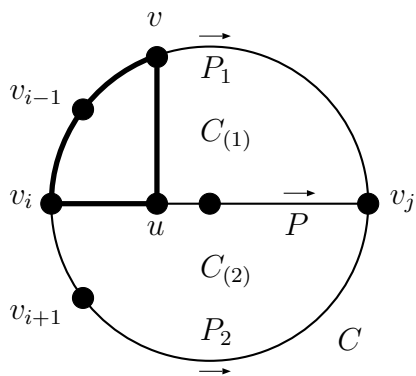}
\caption{A picture for Proof of Lemma \ref{newedgedisjoint}}
\label{fig-2-8}
\end{center}
\end{figure}

\begin{Cor}\label{newedgedisjoint1}
Let $G$ be a graph satisfying the condition {\rm (E1)}
and $C$ be a hole of length at least $5$ in $G$.
Given a vertex $v$ of $C$, adding new edges
joining $v$ and any other vertices on
$C$ reduces the number of holes.
\end{Cor}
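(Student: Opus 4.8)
The plan is to read the statement in the only form under which it is true and then derive it directly from Lemma~\ref{newedgedisjoint}. Write $C=v_0v_1\cdots v_{m-1}v_0$ with $m\ge 5$ and $v=v_0$, and let $G'$ be obtained from $G$ by joining $v_0$ to the remaining vertices of $C$, i.e.\ by adding every chord $v_0v_k$ with $2\le k\le m-2$. Here ``joining $v$ and any other vertices on $C$'' must be read as joining $v_0$ to \emph{all} the other vertices of $C$ (the fan triangulation): a single chord merely splits a long hole into shorter holes and can even \emph{raise} the count --- adding one ``long'' chord to a $6$-hole yields two $4$-holes --- so the entire fan is what is forced if the hole count is to drop, and it is exactly the chordal patch needed to apply Theorem~\ref{chordalpart}. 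To prove $h(G')<h(G)$ I would establish two facts. First, $C$ is a hole of $G$ but, having acquired a chord, is not a hole of $G'$. Second, \emph{no} hole of $G'$ uses any new edge; granting this, every hole of $G'$ uses only edges of $G$ and, since $G\subseteq G'$, is induced already in $G$, so the holes of $G'$ form a subset of those of $G$ omitting $C$, whence $h(G')\le h(G)-1$.

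The crux is the second fact. Suppose a hole $H$ of $G'$ used a new edge. Since all new edges are incident to $v_0$, we have $v_0\in V(H)$; let $a,b$ be its two neighbours on $H$ and set $P:=H-v_0$, a path from $a$ to $b$ of length at least $2$ using only old edges. Two observations drive the argument: (i) in $G'$ the vertex $v_0$ is adjacent to \emph{every} vertex of $C$, so since $H$ is induced, $V(H)\cap V(C)\subseteq\{v_0,a,b\}$; and (ii) because $X_C\subseteq N_G(v_0)\subseteq N_{G'}(v_0)$ and no vertex of $H$ other than $a,b$ is adjacent to $v_0$ in $G'$, every internal vertex of $P$ avoids $X_C$, and by (i) also avoids $V(C)$. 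As $H$ uses a new edge, at least one neighbour, say $a$, is joined to $v_0$ by a new edge, so $a\in V(C)$ is non-adjacent to $v_0$ in $G$.

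It then remains to exhibit a $C$-avoiding path between two non-adjacent vertices of $C$, contradicting Lemma~\ref{newedgedisjoint}. If $b\in V(C)$, then $P$ itself is a $C$-avoiding $(a,b)$-path, and $a,b$ can be neither adjacent on $C$ (that edge would chord $H$) nor non-adjacent on $C$ (Lemma~\ref{newedgedisjoint}), a contradiction. If $b\notin V(C)$, then $V(H)\cap V(C)=\{v_0,a\}$ and the edge $v_0b$ is old (new edges land on $C$), so $R:=v_0,b,\dots,a$ --- namely $v_0$ followed by $P$ reversed --- is a path of old edges whose internal vertices $b$ and those of $P$ avoid $V(C)\cup X_C$; indeed $b\notin X_C$, since otherwise $b$ would be adjacent to $a$, impossible as $a,b$ are the two neighbours of $v_0$ on a hole of length at least $4$. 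Hence $R$ is a $C$-avoiding path joining the non-adjacent vertices $v_0,a$ of $C$, again contradicting Lemma~\ref{newedgedisjoint}. I expect the main obstacle to be exactly this case analysis --- tracking which vertices of $H$ may lie on $C$ and verifying the $X_C$-exclusions --- together with the interpretive point that strict reduction genuinely requires adding the whole fan rather than an arbitrary subset of chords, so that the restriction is a feature of the mathematics and not a gap in the argument.
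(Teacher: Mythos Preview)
Your argument is correct and follows the same approach as the paper: show that $C$ is destroyed and that any putative new hole would yield a $C$-avoiding path between two non-adjacent vertices of $C$, contradicting Lemma~\ref{newedgedisjoint}. Your case split on whether the second neighbour $b$ lies on $C$, and your interpretive remark that only the full fan guarantees a strict decrease, are in fact more careful than the paper's one-line claim that ``$C'-vw$ is a $C$-avoiding $(v,w)$-path in $G$'', which does not explicitly dispose of the case where both edges of $C'$ at $v$ are new.
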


\begin{proof}
It is obvious that $C$ is not a hole in the resulting graph $G'$. 
Thus it is sufficient to show that no new hole has been created. 
We show it by contradiction. 
Suppose that there is a hole $C'$ in $G'$ which is not in $G$. 
Then it contains an edge $vw$, where $w$ is a vertex on $C$ 
which is 
not adjacent to $v$ in $G$. 
Then 
$C'-vw$ 
is a $C$-avoiding $(v,w)$-path in $G$.
This contradicts Lemma~\ref{newedgedisjoint}.
\end{proof}

\begin{figure}[h]
\psfrag{v}{$v$}
\psfrag{w}{$w$}
\psfrag{C}{$C$}
\psfrag{C'}{$C'$}
\begin{center}
\includegraphics[scale=0.7]{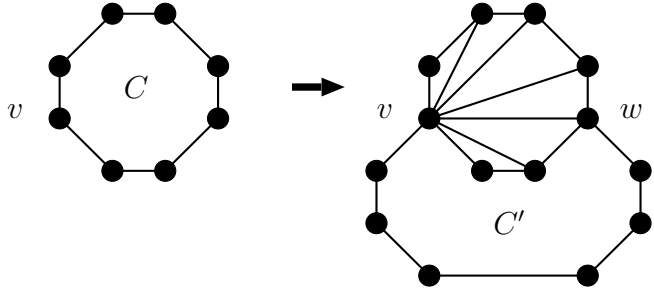}
\caption{A picture for Proof of Corollary \ref{newedgedisjoint1}}
\label{fig-2-9}
\end{center}
\end{figure}

\begin{Lem}[{\cite[Lemma 4]{ck}}]\label{vertexcutone}
Suppose that a graph $G$ has exactly one hole $C$.
If $G$ has a $C$-avoiding $(v_i,v_{i+1})$-path
for two adjacent vertices $v_i$ and $v_{i+1}$ on $C$, 
then $X_C \cup \{v_i,v_{i+1}\}$ is a vertex cut of $G$. 
\end{Lem}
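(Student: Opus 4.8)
The plan is to argue by contradiction and show that $S:=X_C\cup\{v_i,v_{i+1}\}$ separates the interior of the given $C$-avoiding path from the rest of the hole. Write the $C$-avoiding $(v_i,v_{i+1})$-path as $P=v_i w_1 w_2\cdots w_t v_{i+1}$; since $v_i,v_{i+1}\in V(C)$ the path has length at least $2$, so $t\ge 1$ and its internal vertices $w_1,\dots,w_t$ all lie outside $V(C)\cup X_C$. I first note that the internal vertices of $P$ and the vertices of $V(C)\setminus\{v_i,v_{i+1}\}$ all survive in $G-S$ (the former avoid $V(C)\cup X_C$ by definition; the latter avoid $X_C$ since $V(C)\cap X_C=\emptyset$), and that both of these sets are nonempty. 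It therefore suffices to prove that no $w_a$ lies in the same component of $G-S$ as any vertex of $V(C)\setminus\{v_i,v_{i+1}\}$: as all these vertices lie in a common component of $G$, this forces $G-S$ to have strictly more components than $G$, i.e.\ exhibits $S$ as a vertex cut.

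The key ingredient I would isolate as a preliminary claim is this: \emph{in a graph with exactly one hole $C$, there is no $C$-avoiding path between two non-adjacent vertices of $C$.} When $|V(C)|\ge 5$ this is precisely Lemma~\ref{newedgedisjoint}, since a graph with a single hole satisfies the condition {\rm (E1)} vacuously. When $|V(C)|=4$ I would check that the same argument still applies: that proof does not genuinely use $m\ge 5$, as it locates an internal vertex $u$ of a hypothetical connecting path that is adjacent to two non-adjacent vertices of $C$ and then invokes Lemma~\ref{wheel} to conclude $u\in X_C$, contradicting $C$-avoidance.

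For the main reduction, suppose to the contrary that some $w_a$ is joined in $G-S$ to a vertex of $V(C)\setminus\{v_i,v_{i+1}\}$, and take a shortest path $R$ in $G-S$ from $\{w_1,\dots,w_t\}$ to $V(C)\setminus\{v_i,v_{i+1}\}$, say from $w_a$ to $v_j$. By minimality and by $R\subseteq G-S$, every internal vertex of $R$ lies outside $V(C)\cup X_C$ and outside $\{w_1,\dots,w_t\}$; hence the prefix $v_i w_1\cdots w_a$ of $P$ meets $R$ only at $w_a$, and their concatenation is a simple $(v_i,v_j)$-path whose internal vertices all avoid $V(C)\cup X_C$, that is, a $C$-avoiding $(v_i,v_j)$-path. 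Symmetrically, the suffix $v_{i+1} w_t\cdots w_a$ of $P$ together with $R$ yields a $C$-avoiding $(v_{i+1},v_j)$-path. Now $j\notin\{i,i+1\}$, and since $C$ is an induced cycle of length at least $4$, a vertex $v_j$ is adjacent to $v_i$ only if $j=i-1$ and to $v_{i+1}$ only if $j=i+2$, with $i-1\ne i+2$; thus $v_j$ is non-adjacent to at least one of $v_i,v_{i+1}$. Selecting that endpoint produces a $C$-avoiding path between two non-adjacent vertices of $C$, contradicting the preliminary claim. Hence no such $R$ exists, and $S$ is a vertex cut.

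I expect the entire difficulty to sit in the preliminary claim, which carries all of the real combinatorial work; once it is available, the reduction above is only a short connectivity argument built from the path $P$. The point I would be most careful about is the $|V(C)|=4$ case of that claim, where I must verify explicitly that the chord-chasing argument of Lemma~\ref{newedgedisjoint} still terminates in an internal vertex lying in $X_C$ even though the clean statement of that lemma is phrased for holes of length at least $5$.
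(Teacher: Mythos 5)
Your proof is correct, but note that the paper does not actually prove this lemma: it is imported verbatim from Cho and Kim as \cite[Lemma 4]{ck}, so there is no in-paper argument to compare against. Your reconstruction is sound: the internal vertices $w_1,\dots,w_t$ of $P$ and the vertices of $V(C)\setminus\{v_i,v_{i+1}\}$ both survive the deletion of $S=X_C\cup\{v_i,v_{i+1}\}$, both sets are nonempty, and all lie in one component of $G$; and a shortest connecting path $R$ in $G-S$ splices with the appropriate half of $P$ into a genuine $C$-avoiding path ending at a vertex $v_j$ that is non-adjacent to at least one of $v_i,v_{i+1}$ (your index check $i-1\ne i+2$ is exactly what is needed, and holds since $|V(C)|\ge 4$). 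This reduction to ``no $C$-avoiding path joins two non-adjacent vertices of $C$'' is precisely the mechanism the paper itself deploys later, in the proof of Theorem \ref{thm:mainCaseA}, to certify that $X_t\cup\{v_{t,j},v_{t,j+1}\}$ is a vertex cut, where the nonexistence claim is supplied by Theorem \ref{thm:strthm}.

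On the one step you rightly flag as delicate, the $|V(C)|=4$ case of the preliminary claim: your judgement that the chord-chasing argument of Lemma \ref{newedgedisjoint} survives at $m=4$ is correct in the one-hole setting, and in fact the argument becomes \emph{easier} there. The step that forces the cycle built from the $(v_i,u)$-section of $P$, the edge $uv$, and the $(v,v_i)$-section of $P_1^{-1}$ to be a triangle rather than a hole leans on more than (E1) gives in general (a hole sharing a single edge with $C$ is not excluded by (E1)), but when $C$ is the unique hole any induced cycle of length at least $4$ other than $C$ is an immediate contradiction, which closes that step at once. Alternatively, you could sidestep the re-verification entirely: by Proposition \ref{prop:length3}, a $C$-avoiding path between opposite vertices of a $4$-hole forces an induced prism containing $C$, and a prism contains three holes, contradicting the uniqueness of $C$. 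Either route completes your argument.
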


We can extend this lemma as follows: 

\begin{Lem}\label{vertexcut}
Let $G$ be a graph satisfying the condition {\rm (E1)} 
and $C$ be a hole of length at least $5$ in $G$. 
If $G$ has a $C$-avoiding $(v_i,v_{i+1})$-path for 
two adjacent vertices $v_i$ and $v_{i+1}$ on $C$, 
then $X_C \cup \{v_i,v_{i+1}\}$ is a vertex cut of $G$. 
\end{Lem}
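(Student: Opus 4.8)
The plan is to mimic the structure of the one-hole result (Lemma~\ref{vertexcutone}) but to replace its hole-specific reasoning by a single appeal to Lemma~\ref{newedgedisjoint}, which is precisely the (E1)-analogue of the tool used there. Write $C=v_0v_1\cdots v_{m-1}v_0$ with $m\ge 5$, set $S:=X_C\cup\{v_i,v_{i+1}\}$, and let $A:=V(C)\setminus\{v_i,v_{i+1}\}$ be the ``long arc'' (so $|A|=m-2\ge 3$). Let $Q$ be the given $C$-avoiding $(v_i,v_{i+1})$-path. Since both endpoints lie on $C$, the length-$1$ clause of the definition is unavailable, so $Q$ has length at least $2$ and hence possesses an internal vertex $u$ adjacent to $v_i$ with $u\notin V(C)\cup X_C$. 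First I would observe that $u\notin S$ and that $A\cap S=\emptyset$ (using $V(C)\cap X_C=\emptyset$), so $u$ and every vertex of $A$ survive in $G-S$; yet in $G$ the vertices $u$ and $v_{i-1}$ lie in a common component via the path $u\,v_i\,v_{i-1}$. It therefore suffices to show that $u$ and $v_{i-1}$ lie in \emph{different} components of $G-S$: this splits a component of $G$ and so strictly increases the number of components, exhibiting $S$ as a vertex cut.

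Second, I would argue by contradiction. Suppose $u$ is joined to $A$ in $G-S$, and choose a \emph{shortest} path $R$ in $G-S$ from $u$ to the set $A$, terminating at some $v_\ell\in A$. The combination of minimality (which keeps $R$ off $A$ before its endpoint) with $R\cap S=\emptyset$ (which keeps it off $v_i,v_{i+1}$ and off $X_C$) guarantees that every internal vertex of $R$ lies outside $V(C)\cup X_C$. This shortest-path choice is the step that makes the subsequent gluing produce an honest $C$-avoiding path, and it is the reason the naive argument fails without it.

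Third comes the gluing. Because $C$ is an induced cycle, adjacencies among $V(C)$ are exactly the cycle edges, and since $m\ge 5$ no vertex of $A$ can be consecutive to both $v_i$ and $v_{i+1}$; hence $v_\ell$ is non-adjacent in $G$ to at least one of $v_i,v_{i+1}$. If $v_\ell$ is non-adjacent to $v_i$, then $v_i\,u\,R$ is a $C$-avoiding $(v_i,v_\ell)$-path; otherwise $\ell=i-1$ and I would anchor at the other endpoint, traversing $Q$ from $v_{i+1}$ back to $u$ and then following $R$ to obtain a walk from $v_{i+1}$ to $v_{i-1}$ whose internal vertices (the vertices of $Q$ together with the internal vertices of $R$) again all avoid $V(C)\cup X_C$, with $v_{i+1},v_{i-1}$ non-adjacent. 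In either case, extracting a simple subpath gives a $C$-avoiding path between two non-adjacent vertices of $C$, contradicting Lemma~\ref{newedgedisjoint}. I expect the main obstacle to be exactly this last case analysis together with keeping the glued path $C$-avoiding: connecting $u$ to $v_{i-1}$ directly through $v_i$ would produce an \emph{adjacent} pair, to which Lemma~\ref{newedgedisjoint} does not apply, so the essential device is to anchor at whichever of $v_i,v_{i+1}$ is non-adjacent to the landing vertex $v_\ell$, and to have arranged (via the shortest $R$) that the intermediate vertices never return to the hole.
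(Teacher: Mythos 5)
Your argument is correct, but it takes a genuinely different route from the paper. The paper proves this lemma by induction on the total number of holes: it picks any other hole $C'$, adds all chords from one vertex of $C'$ (using Corollary~\ref{newedgedisjoint1} to guarantee no new hole and no chord of $C$ is created), applies the induction hypothesis to the resulting graph, and notes that a vertex cut of that supergraph is a vertex cut of the spanning subgraph $G$; the base case $h=1$ is the imported Cho--Kim result (Lemma~\ref{vertexcutone}). You instead give a direct, self-contained separation argument: take the neighbor $u$ of $v_i$ on the given $C$-avoiding path $Q$, suppose $u$ still reaches the long arc $A=V(C)\setminus\{v_i,v_{i+1}\}$ in $G-S$ with $S=X_C\cup\{v_i,v_{i+1}\}$, take a shortest such path $R$ landing at $v_\ell$, and glue it to $v_i$ (or, when $v_\ell=v_{i-1}$, route through $Q$ from $v_{i+1}$) to manufacture a $C$-avoiding path between two non-adjacent vertices of $C$, contradicting Lemma~\ref{newedgedisjoint}. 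The case split on whether $v_\ell$ is adjacent to $v_i$ is exactly where care is needed and you handle it correctly, since extracting a simple path from your walk preserves the $C$-avoiding property (the endpoints are the only walk vertices in $V(C)\cup X_C$, and they are non-adjacent). What each approach buys: the paper's induction reuses the one-hole lemma as a black box together with its hole-destroying machinery, while yours avoids both the induction and the external citation, in effect reproving the separation mechanism directly from Lemma~\ref{newedgedisjoint}, and it sidesteps having to verify that the chord-adding operation preserves (E1) and keeps $C$ a hole. One small point worth a sentence in a final write-up: the step ``this splits a component of $G$ and so strictly increases the number of components'' tacitly uses that no component of $G$ is entirely contained in $S$, which holds because $S$ lies inside the component containing $C$, and that component also contains $v_{i-1}\notin S$.
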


\begin{proof}
We prove by induction on the number $h$ of holes of a graph.
If a graph has exactly one hole, then it immediately follows 
from Lemma~\ref{vertexcutone}. 
Suppose that the lemma holds for any graph satisfying 
the condition {\rm (E1)} 
with at most $h-1$ holes for $h \ge 2$.
Now let $G$ be a graph satisfying the condition {\rm (E1)} with $h$ holes.
Suppose that $G$ has a $C$-avoiding $(v_i,v_{i+1})$-path
for some hole $C$ of $G$
and two adjacent vertices $v_i$ and $v_{i+1}$ on $C$.
Since $h \ge 2$,
there exists another hole $C'$.
Take a vertex $w$ of $C'$ and add new edges 
between $w$ and any other vertices on $C'$ by new edges.
Then, by Corollary~\ref{newedgedisjoint1},
the resulting graph $G'$ has less than $h$ holes.
Since no new hole has been created, $G'$ is still a
graph satisfying the condition {\rm (E1)}. By the condition {\rm (E1)}, 
$C$ and $C'$ share at most one edge and therefore
no chord for $C$ is created in the process of adding the edges. 
Thus $C$ is still a hole of $G'$.
By the induction hypothesis,
$X_C \cup \{v_i,v_{i+1}\}$ is a vertex cut of $G'$.
Since $G$ is a spanning subgraph of $G'$, it holds that
$X_C \cup \{v_i,v_{i+1}\}$ is a vertex cut of $G$.
\end{proof}

%%%%%%%%%%%%%%%%%%%%%%%%%%%%%%%%%%%%%%%%%%%%%%%%%%%%%%%%%%%%%%%%%%%%%%%%%%%%%
\subsection{Properties of $C$-avoiding paths for a hole $C$ of length $4$}
%%%%%%%%%%%%%%%%%%%%%%%%%%%%%%%%%%%%%%%%%%%%%%%%%%%%%%%%%%%%%%%%%%%%%%%%%%%%%

\begin{Prop}\label{prop:C4-avoid}
Let $G$ be a graph satisfying the condition {\rm (E1)}.
Suppose that $G$ has a hole $C=v_0 v_1 v_2 v_3 v_0$ of length $4$
and that there exists a $C$-avoiding $(v_0, v_{2})$-path 
of length at least $3$.
Let $P=x_0 x_1 x_2 \cdots x_{l-1} x_l$
be a shortest $C$-avoiding $(v_0, v_2)$-path, where $x_0=v_0$, $x_l=v_2$, 
and $l (\geq 3)$ is the length of $P$. 
Then, for any $i \in \{1, \ldots, l-1 \}$, the following hold:
\begin{itemize}
\item[{\rm (1)}]
$x_i$ is adjacent to exactly one of the vertices $v_1$, $v_3$;
\item[{\rm (2)}]
If $x_i v_1 \not\in E(G)$, then $x_{i+1} v_1 \in E(G)$;
\item[{\rm (3)}]
If $x_i v_3 \not\in E(G)$, then $x_{i+1} v_3 \in E(G)$.
\end{itemize}
\end{Prop}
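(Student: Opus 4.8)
\emph{Plan.} Throughout I use that $P$ is shortest, hence chordless (no edge joins non-consecutive $x_m$'s), that $v_0,v_2$ are adjacent to both $v_1$ and $v_3$ while $v_1\not\sim v_3$, and that every internal $x_m$ lies outside $V(C)\cup X_C$. I would first dispose of (1). For ``at most one,'' note that $v_1,v_3$ are non-adjacent vertices of $C$; if some internal $x_i$ were adjacent to both, then $x_i$ would be a common neighbour of $v_1,v_3$ off the hole $C$, so Lemma~\ref{wheel} would force $x_i\in X_C$, contradicting that $x_i$ is an internal vertex of a $C$-avoiding path.

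For ``at least one,'' I argue by contradiction: suppose an internal $x_i$ is adjacent to neither $v_1$ nor $v_3$. Since $x_0=v_0$ and $x_l=v_2$ are adjacent to $v_1$, there are $v_1$-neighbours of $P$ on both sides of $i$; let $x_a$ ($a<i$) and $x_b$ ($b>i$) be the closest ones, and similarly let $x_c,x_d$ be the closest $v_3$-neighbours with $c<i<d$. By chordlessness of $P$, and because $v_1$ (resp.\ $v_3$) is adjacent to none of the indices strictly between $a$ and $b$ (resp.\ $c$ and $d$), the cycles $H^1:=v_1x_a x_{a+1}\cdots x_b v_1$ and $H^3:=v_3x_c\cdots x_d v_3$ are induced of length at least $4$, i.e.\ holes. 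Both contain the two consecutive path edges $x_{i-1}x_i$ and $x_ix_{i+1}$, so they share two edges; since $v_1\in V(H^1)\setminus V(H^3)$ they are distinct, contradicting (E1). This proves (1).

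Granting (1), each internal vertex receives a colour ($v_1$ or $v_3$), and (2)--(3) amount to: no two consecutive internal vertices share a colour. By the symmetry $v_1\leftrightarrow v_3$ it suffices to rule out consecutive internal $x_i,x_{i+1}$ both adjacent to $v_3$. Let $x_b$ be the first vertex after $x_{i+1}$ with $x_bv_1\in E(G)$; by (1) every internal vertex before it is adjacent to $v_3$, so $x_{b-1}$ has colour $v_3$ and $x_b$ has colour $v_1$, and $b\ge i+2\ge 3$. If $x_b$ is internal, I would form the \emph{cross hole} $v_3 x_{b-1}x_b v_1 v_0 v_3$, built from the edges $x_{b-1}v_3,\ x_{b-1}x_b,\ x_bv_1$ together with the two edges $v_0v_1,\ v_0v_3$ of $C$. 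Using (1) (to kill $v_3x_b$ and $x_{b-1}v_1$) and $b\ge 3$ (so $x_{b-1},x_b\ne x_1$, killing edges to $v_0$) one checks it is chordless; it is then a hole sharing the two edges $v_0v_1,v_0v_3$ with $C$, contradicting (E1). The remaining possibility is that the monochromatic $v_3$-run reaches the end, i.e.\ $x_b=v_2$. Then no cross hole exists, and I would instead note that $v_3$ is adjacent to every internal vertex of the hole $H^1=v_1x_a\cdots x_{l-1}v_2 v_1$ and to $v_2$; hence $v_3$ is adjacent to two \emph{non-adjacent} vertices of $H^1$ (two internal ones if the run is long, else $x_{l-2}$ and $v_2$), so Lemma~\ref{wheel} gives $v_3\in X_{H^1}$ and thus $v_3\sim v_1$, which is absurd.

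\emph{Main obstacle.} The delicate part is (2)--(3): the clean ``two holes sharing two path edges'' trick that settles (1) is unavailable here, since two same-coloured vertices next to $v_3$ only yield a triangle $v_3x_ix_{i+1}$, not a second hole. The key idea is that a \emph{colour change} $x_{b-1}\to x_b$ lets one reroute a hole all the way back through $v_0$ and collect the two edges of $C$ meeting at $v_0$; the accompanying difficulty is the boundary case in which the run runs into $v_2$, where one must abandon the cross-hole construction and switch to Lemma~\ref{wheel}. Verifying chordlessness of the cross hole---where the shortest-path hypothesis, the bound $b\ge 3$, and the colouring from (1) are all consumed---is the routine but indispensable bookkeeping.
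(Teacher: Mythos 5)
Your argument is correct. Part (1) follows the paper's route: the ``adjacent to neither'' case is settled by exactly the paper's pair of holes through $v_1$ and through $v_3$ that share the two path edges $x_{i-1}x_i$ and $x_ix_{i+1}$, and your appeal to Lemma~\ref{wheel} in the ``adjacent to both'' case is an equivalent shortcut for the paper's explicit four-cycles $v_0v_1x_iv_3v_0$ (resp.\ $v_2v_1x_iv_3v_2$ when $i=1$). For (2)--(3) you take a genuinely different route. The paper works backwards from $x_i$: it takes the last $v_1$-neighbour $x_{i_1}$ on $P$ before $x_i$ and the first $v_3$-neighbour $x_{i_5}$ after $x_{i_1}$, and produces two holes $C_{(3)}$, $C_{(4)}$ sharing the edges $v_1x_{i_1}$ and $x_{i_1}x_{i_1+1}$; note that, as printed, $C_{(3)}=v_1x_{i_1}\cdots x_{i_5}v_1$ closes with the edge $x_{i_5}v_1$, which part (1) forbids for an internal $v_3$-neighbour, so the reader must repair the closing of that cycle (and the degenerate case $x_{i_1}=v_0$) before the two-shared-edges contradiction goes through. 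Your construction instead walks forward to the first colour change $x_{b-1}\to x_b$ and closes a five-hole through $v_0$ containing both edges $v_0v_1$ and $v_0v_3$ of $C$, falling back on Lemma~\ref{wheel} (which forces the non-edge $v_1v_3$ via $v_3\in X_{H^1}$) when the $v_3$-run persists all the way to $v_2$. Your route costs a two-case split, but every cycle you exhibit is verifiably chordless from the data already in hand (shortestness of $P$, part (1), and $b\ge 3$); both arguments ultimately rest on the same (E1) mechanism of exhibiting two induced cycles with two common edges.
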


\begin{proof}
We show (1) by contradiction.
Suppose that there is $i \in \{ 1, \ldots, l-1 \}$
such that $x_i$ is not adjacent to exactly one of vertices $v_1$, $v_3$.
First suppose that $x_i v_1 \in E(G)$ and $x_i v_3 \in E(G)$.
If $i \neq 1$, then
$v_0 v_1 x_i v_3 v_0$ is a hole and shares
two edges $v_0 v_1$ and $v_0 v_3$ with the hole $C$.
If $i=1$, then
$v_2 v_1 x_i v_3 v_2$ is a hole
and shares two edges $v_2 v_1$ and $v_2 v_3$ with $C$.

Suppose that $x_i v_1 \not\in E(G)$ and $x_i v_3 \not\in E(G)$.
Let $x_{i_1}$ (resp.\ $x_{i_3}$)
be the last vertex on the $(x_{0}, x_{i-1})$-section of $P$
that is adjacent to $v_1$ (resp.\ $v_3$),
and
let $x_{i_2}$ (resp.\ $x_{i_4}$)
be the first vertex on the $(x_{i+1}, x_{l})$-section of $P$
that is adjacent to $v_1$ (resp.\ $v_3$).
Then $C_{(1)}:= v_1 x_{i_1} x_{i_1+1} \cdots x_i \cdots x_{i_2} v_1$
and $C_{(2)}:= v_3 x_{i_3} x_{i_3+1} \cdots x_i \cdots x_{i_4} v_3$
are holes of $G$, and they share
two edges $x_{i-1} x_i$ and $x_i x_{i+1}$,
which is a contradiction. Hence (1) holds.

We show (2) by contradiction. Suppose that there is $i\in \{0, \ldots, l-1 \}$
such that $x_i v_1 \not\in E(G)$ and $x_{i+1} v_1 \not\in E(G)$.
Since $x_l=v_2$ and $v_1v_2 \in E(G)$, we have $i \neq l-1$.
By (1), $x_i v_3 \in E(G)$ and $x_{i+1} v_3 \in E(G)$.
Let
$x_{i_1}$ be the vertex defined in (1)
and
let $x_{i_5}$
be the first vertex on the $(x_{i_1+1}, x_{i})$-section of $P$
that is adjacent to $v_3$.
Then $C_{(3)}:= v_1 x_{i_1} x_{i_1+1} \cdots x_{i_5} v_1$
and
$C_{(4)} := v_1 x_{i_1} x_{i_1+1} \cdots x_{i_5} v_3 v_2 v_1$
are holes of $G$.
The two edges $v_1 x_{i_1}$ and $x_{i_1} x_{i_1 +1}$
are contained in both $C_{(3)}$ and $C_{(4)}$, which is a contradiction.
Hence it holds that
if $x_i v_1 \not\in E(G)$, then $x_{i+1} v_1 \in E(G)$.

Statement (3) can be shown by an argument similar to the proof of (2).
\end{proof}

We denote by $[x_1 y_1 | x_2 y_2 | x_3 y_3]$
the graph with vertex set
$\{x_1, x_2, x_3 \} \cup \{y_1, y_2, y_3 \}$
and edge set
$\{x_i x_j \mid 1\leq i<j \leq 3\}
\cup \{y_i y_j \mid 1\leq i<j \leq 3\}
\cup \{x_i y_i \mid 1\leq i \leq 3\}$.
A graph isomorphic to $[x_1 y_1 | x_2 y_2 | x_3 y_3]$
is called
a {\it $3$-prism graph}. 
In this paper,
we call a $3$-prism graph just a {\it prism}.
We say that a graph is {\it prism-free}
if the graph does not contain a prism
as an induced subgraph.

\begin{Prop}\label{prop:length3}
Let $G$ be a graph satisfying the condition {\rm (E1)}.
Suppose that $G$ has a hole $C=v_0 v_1 v_2 v_3 v_0$ of length $4$,
and that there is a $C$-avoiding $(v_0, v_2)$-path.
Let $P$ be a shortest $C$-avoiding $(v_0, v_2)$-path. 
Then the length of $P$
is equal to $3$ and the subgraph of $G$ induced by $V(C) \cup V(P)$ 
is a prism $[v_0 v_3 | x y | v_1 v_2]$ or a prism
$[v_0 v_1 | x y | v_3 v_2]$, where $P=v_0 xy v_2$. 
\end{Prop}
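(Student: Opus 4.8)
The plan is to pin down the length $l$ of $P$ in two stages and then read off the prism. I would begin by ruling out short paths. A $C$-avoiding $(v_0,v_2)$-path of length $1$ cannot exist, since $v_0$ and $v_2$ are opposite (hence non-adjacent) vertices of the $4$-cycle $C$. A $C$-avoiding path of length $2$ would have a single internal vertex $u \notin V(C) \cup X_C$ that is a common neighbour of the non-adjacent vertices $v_0$ and $v_2$; but then Lemma~\ref{wheel} forces $u \in X_C$, a contradiction. Hence the shortest $C$-avoiding $(v_0,v_2)$-path has length $l \ge 3$, so Proposition~\ref{prop:C4-avoid} applies to $P = x_0 x_1 \cdots x_l$ (with $x_0 = v_0$, $x_l = v_2$). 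Calling an internal vertex $x_i$ ($1 \le i \le l-1$) of \emph{type $1$} or \emph{type $3$} according to whether it is adjacent to $v_1$ or to $v_3$, part~(1) of Proposition~\ref{prop:C4-avoid} says each $x_i$ has exactly one type, and parts~(2)--(3) combined with~(1) force consecutive internal vertices to have opposite types; that is, the types \emph{alternate} along $P$.

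The main step is to show $l \le 3$. Suppose instead $l \ge 4$, so there are at least three internal vertices $x_1, x_2, x_3$. Relabelling the two side vertices if necessary (the hypothesis is symmetric under interchanging $v_1$ and $v_3$), I may assume $x_1$ has type $1$; then alternation gives $x_2$ type $3$ and $x_3$ type $1$, so $x_1 v_1, x_3 v_1, x_2 v_3 \in E(G)$ while $x_1 v_3, x_3 v_3, x_2 v_1 \notin E(G)$. Since $P$ is a shortest path it is chordless, so non-consecutive $x_i$'s are non-adjacent. One then checks that
\[
C_{(1)} := v_1 x_1 x_2 x_3 v_1 \qquad\text{and}\qquad C_{(2)} := v_0 v_1 x_3 x_2 v_3 v_0
\]
are both holes of $G$: for $C_{(1)}$ the only possible chords $v_1 x_2$ and $x_1 x_3$ are absent; for $C_{(2)}$ the possible chords $v_0 x_2$, $v_0 x_3$, $v_1 x_2$, $v_1 v_3$, $x_3 v_3$ are all absent (the first two by chordlessness of $P$, the rest by the type assignment and by $C$ being a hole). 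But $C_{(1)}$ and $C_{(2)}$ are distinct and share the two edges $x_2 x_3$ and $v_1 x_3$, contradicting the condition~(E1). Therefore $l = 3$.

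Finally, with $l = 3$ write $P = v_0 x y v_2$ (so $x = x_1$, $y = x_2$), whose two internal vertices have opposite types. If $x \sim v_1$ and $y \sim v_3$, I claim the subgraph induced by $V(C) \cup \{x,y\}$ is exactly the prism $[v_0 v_3 \,|\, x y \,|\, v_1 v_2]$, and the symmetric case $x \sim v_3$, $y \sim v_1$ gives $[v_0 v_1 \,|\, x y \,|\, v_3 v_2]$. The required edges are present (the four edges of $C$, the path edges $v_0 x$, $x y$, $y v_2$, and the attachments $x v_1$, $y v_3$), and to see there are no others I rule out every remaining pair: $v_0 v_2$ and $v_1 v_3$ are chords of the hole $C$; $v_0 y$ and $v_2 x$ are chords of the shortest path $P$; and $v_1 y$, $v_3 x$ are excluded by the type assignment. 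This is precisely the edge set of the stated prism.

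The part needing the most care is the $l \ge 4$ argument: the whole scheme rests on the alternation of types (so that $x_1, x_3$ attach to the same side and $x_2$ to the other) and on checking that the two exhibited cycles are genuinely induced and share \emph{two} edges. Everything else is routine bookkeeping once Lemma~\ref{wheel} and Proposition~\ref{prop:C4-avoid} are in hand.
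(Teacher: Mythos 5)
Your proposal is correct and follows essentially the same route as the paper's proof: ruling out lengths $1$ and $2$ via Lemma~\ref{wheel}, using Proposition~\ref{prop:C4-avoid} to force the alternating attachment pattern, and deriving a contradiction for $l \ge 4$ from the very same pair of cycles ($v_1 x_1 x_2 x_3 v_1$ and $v_0 v_1 x_3 x_2 v_3 v_0$) sharing the edges $v_1 x_3$ and $x_2 x_3$. Your write-up is slightly more explicit than the paper's in verifying that these cycles are induced and in checking the final prism edge set, but the argument is the same.
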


\begin{proof}
Let
$P=x_0 x_1 x_2 \cdots x_{l-1} x_l$ 
be a shortest $C$-avoiding $(v_0, v_2)$-path,
where $x_0=v_0$ and $x_l=v_2$.
Since $v_0v_2 \not\in E(G)$, $l \neq 1$.
If $l=2$, then $P=v_0 x_1 v_2$
and so
we have $x_1 \in X_C$ by Lemma \ref{wheel},
which contradicts the fact that
$P$ is a $C$-avoiding path.
Thus the length $l$ of $P$ is at least $3$. Suppose that $l \ge 4$. 
Then $x_3 \neq v_2$.
By Proposition \ref{prop:C4-avoid} (1), 
exactly one of
$x_1 v_1$, $x_1 v_3$ is an edge of $G$.
Without loss of generality,
we may assume that $x_1 v_1 \in E(G)$
and $x_1 v_3 \not\in E(G)$.
Then, by Proposition \ref{prop:C4-avoid} (3), $x_2v_3 \in E(G)$.
By (1) of the same proposition, $x_2v_1 \not\in E(G)$. 
By (2), $x_3v_1 \in E(G)$. Then, by (1), $x_3v_3 \not\in E(G)$. 
(See Figure \ref{fig-2-13}.) 
Then $C_{(1)}:= v_0 v_1 x_3 x_2 v_3 v_0$
and $C_{(2)}:= v_1 x_3 x_2 x_1 v_1$ are holes of $G$.
The two edges $v_1 x_3$ and $x_2 x_3$
are contained in
both $C_{(1)}$ and $C_{(2)}$, which is a contradiction 
to the fact that $G$ satisfy the condition (E1).
Hence $l=3$.
Furthermore, by Proposition \ref{prop:C4-avoid}, 
$Y$ 
the subgraph of $G$ induced by $V(C) \cup V(P)$ 
is either a prism $[v_0 v_3 | x y | v_1 v_2]$ or a prism 
$[v_0 v_1 | x y | v_3 v_2]$.
\end{proof}

\begin{figure}[h]
\psfrag{a}{$v_0$}
\psfrag{b}{$v_1$}
\psfrag{c}{$v_2$}
\psfrag{d}{$v_3$}
\psfrag{x1}{$x_1$}
\psfrag{x2}{$x_2$}
\psfrag{x3}{$x_3$}
\begin{center}
\includegraphics[scale=0.7]{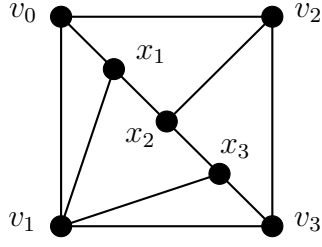}
\caption{A picture for Proof of Proposition \ref{prop:length3}}
\label{fig-2-13}
\end{center}
\end{figure}

Let $G$ a graph satisfying the condition {\rm (E1)} and 
$C=v_0 v_1 v_2 v_3 v_0$ be a hole of length $4$ in $G$. 
For two distinct prisms $Y_1$ and $Y_2$ containing $C$, 
we say that $Y_1$ and $Y_2$ are {\it of the same type} 
if a triangle in $Y_1$ has a common edge with one of the two triangles in $Y_2$,  
and 
we say that $Y_1$ and $Y_2$ are {\it of different types} 
if both of the two triangles in $Y_1$ have no common edge with 
the two triangles in $Y_2$. 
That is, 
two prisms of the forms $[v_0 v_3 | x y | v_1 v_2]$ and 
$[v_0 v_3 | x' y' | v_1 v_2]$ are of the same type, 
two prisms of the forms 
$[v_0 v_1 | x y | v_3 v_2]$ and 
$[v_0 v_1 | x' y' | v_3 v_2]$ are of the same type, 
and 
two prisms of the forms 
$[v_0 v_1 | x y | v_3 v_2]$ and 
$[v_0 v_3 | x' y' | v_1 v_2]$ are of different types. 
(See Figure \ref{fig:prism}.)

\begin{figure}[h]
\psfrag{a}{$v_0$}
\psfrag{b}{$v_1$}
\psfrag{c}{$v_2$}
\psfrag{d}{$v_3$}
\psfrag{e}{$x$}
\psfrag{f}{$y$}
\psfrag{g}{$v_0$}
\psfrag{h}{$v_1$}
\psfrag{i}{$v_2$}
\psfrag{j}{$v_3$}
\psfrag{k}{$x$}
\psfrag{l}{$y$}
\begin{center}
\includegraphics[scale=0.7]{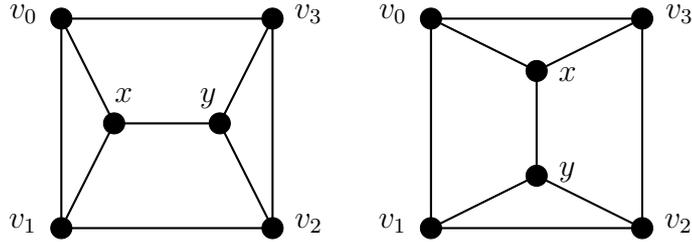}
\caption{Prisms $[v_0 v_3|x y|v_1 v_2]$ 
and $[v_0 v_1|x y|v_3 v_2]$}
\label{fig:prism}
\end{center}
\end{figure}

\begin{Cor}\label{cor:02-13}
Let $G$ be a graph satisfying the condition {\rm (E1)}.
Suppose that $G$ has a hole $C=v_0 v_1 v_2 v_3 v_0$ of length $4$. 
Then, there is a  $C$-avoiding $(v_0, v_2)$-path if and only if 
there is a $C$-avoiding $(v_1,v_3)$-path. 
\end{Cor}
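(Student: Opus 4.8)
The plan is to collapse the biconditional to a single implication using the rotational symmetry of the $4$-hole, and then to read the required path straight off the prism structure guaranteed by Proposition~\ref{prop:length3}.

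First I would note that the hole $C=v_0 v_1 v_2 v_3 v_0$ may equally be presented as $C = v_1 v_2 v_3 v_0 v_1$, and under this relabeling the diagonal pair $\{v_1,v_3\}$ plays exactly the role that $\{v_0,v_2\}$ played in the original labeling, while $\{v_0,v_2\}$ takes the role of $\{v_1,v_3\}$. Since the hypothesis (E1), the set $X_C$, and the notion of a $C$-avoiding path all depend only on $G$ and the (unlabeled) hole $C$, it is enough to prove one implication: that the existence of a $C$-avoiding $(v_0,v_2)$-path forces the existence of a $C$-avoiding $(v_1,v_3)$-path. Applying that single implication to the relabeled hole then yields the converse, so the equivalence follows.

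Assuming a $C$-avoiding $(v_0,v_2)$-path exists, I would invoke Proposition~\ref{prop:length3} to obtain a shortest such path $P=v_0 x y v_2$ of length $3$ for which the subgraph induced by $V(C)\cup V(P)$ is a prism, either of type $[v_0 v_3 \mid x y \mid v_1 v_2]$ or of type $[v_0 v_1 \mid x y \mid v_3 v_2]$. In the first type the two triangles are $\{v_0,x,v_1\}$ and $\{v_2,v_3,y\}$, so $v_1 x y v_3$ is a path; in the second type the triangles are $\{v_0,x,v_3\}$ and $\{v_1,v_2,y\}$, so $v_1 y x v_3$ is a path. In both cases I have exhibited a $(v_1,v_3)$-path of length $3\ge 2$ whose internal vertices are $x$ and $y$, which is precisely the other diagonal transversal of the prism.

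The only point requiring care (rather than a genuine obstacle) is checking that this path is $C$-avoiding, i.e.\ that its internal vertices $x,y$ lie outside $V(C)\cup X_C$. Here the decisive feature is that Proposition~\ref{prop:length3} delivers the prism as an \emph{induced} subgraph of $G$: within the prism each of $x$ and $y$ is adjacent to exactly two of the four vertices of $C$, so neither is adjacent to all of $v_0,v_1,v_2,v_3$ and hence neither belongs to $X_C$; and plainly $x,y\notin V(C)$. This establishes the implication, and together with the symmetry reduction of the first step it completes the proof of the corollary.
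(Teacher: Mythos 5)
Your proof is correct and follows essentially the same route as the paper: both invoke Proposition~\ref{prop:length3} to obtain the induced prism from a shortest $C$-avoiding $(v_0,v_2)$-path and then read off $v_1xyv_3$ or $v_1yxv_3$ as the required $(v_1,v_3)$-path, handling the converse by symmetry. Your explicit check that $x,y\notin X_C$ (each is adjacent to only two vertices of $C$ inside the induced prism) is a detail the paper leaves implicit, but it is the same argument.
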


\begin{proof}
Suppose that there is a $C$-avoiding $(v_0, v_2)$-path.
Let $P$ be a shortest path among all $C$-avoiding $(v_0, v_2)$-paths.
By Proposition \ref{prop:length3}, the length of $P$ is equal to $3$.
Let $P=v_0xyv_2$.
By Proposition \ref{prop:length3},
either $v_1 xy v_3$ or $v_1yxv_3$ is a $C$-avoiding $(v_1, v_3)$-path. 
(See Figure \ref{fig:prism}.)
We can show the converse similarly.
\end{proof}

\begin{Lem}\label{lem:sametype}
Let $G$ be a graph satisfying the condition {\rm (E1)}.
Suppose that $G$ has a hole $C$ of length $4$.
Then the prisms containing $C$ must be 
of the same type.
\end{Lem}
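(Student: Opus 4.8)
The plan is to argue by contradiction. Suppose $C=v_0v_1v_2v_3v_0$ is contained in two prisms of different types. By Proposition \ref{prop:length3} every prism containing $C$ has one of the two forms $[v_0v_3|xy|v_1v_2]$ or $[v_0v_1|x'y'|v_3v_2]$, so ``different types'' means, after relabelling, that $G$ contains a prism $Y_1=[v_0v_3|xy|v_1v_2]$ and a prism $Y_2=[v_0v_1|x'y'|v_3v_2]$. I would first read off all the adjacencies these prisms force: $x$ is adjacent to $v_0,v_1$; $y$ to $v_2,v_3$; $x'$ to $v_0,v_3$; $y'$ to $v_1,v_2$; moreover $xy,x'y'\in E(G)$, while the remaining prism non-edges ($x$ not adjacent to $v_2,v_3$; $y$ not to $v_0,v_1$; $x'$ not to $v_1,v_2$; $y'$ not to $v_0,v_3$) also hold. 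Since $x,y,x',y'$ are internal vertices of $C$-avoiding paths, none of them lies in $X_C$.

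Next I would check that $x,y,x',y'$ are four distinct vertices. If two of them coincided, the merged vertex would be adjacent to two non-adjacent vertices of $C$: for instance $x=x'$ would make it adjacent to both $v_1$ and $v_3$, and $x=y'$ would make it adjacent to both $v_0$ and $v_2$. By Lemma \ref{wheel} such a vertex must lie in $X_C$, contradicting that prism-internal vertices avoid $X_C$. Hence all eight vertices $v_0,v_1,v_2,v_3,x,y,x',y'$ are distinct.

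The heart of the argument is to rule out the four ``cross edges'' $xx'$, $xy'$, $yx'$, $yy'$. Each of them produces a length-$3$ $C$-avoiding path between a non-adjacent pair of $C$. For example, $xy'\in E(G)$ gives the path $v_0xy'v_2$, and $yx'\in E(G)$ gives $v_0x'yv_2$; symmetrically, $xx'\in E(G)$ gives the $(v_1,v_3)$-path $v_1xx'v_3$ and $yy'\in E(G)$ gives $v_1y'yv_3$ (the $(v_1,v_3)$-side is handled by applying Proposition \ref{prop:length3} after the cyclic relabelling $v_0v_1v_2v_3\mapsto v_1v_2v_3v_0$ of the same hole, legitimate by the symmetry underlying Corollary \ref{cor:02-13}). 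Since a $C$-avoiding path between non-adjacent vertices of $C$ exists, its shortest representative has length $3$, so the above length-$3$ paths are shortest, and Proposition \ref{prop:length3} forces $V(C)$ together with such a path to induce a prism. But a direct degree count defeats this: in the $xy'$ case $v_1$ becomes adjacent to $v_0,v_2,x,y'$, hence has degree $4$ (similarly $v_3$, $v_0$, $v_2$ acquire degree $4$ in the other three cases), whereas every vertex of a prism has degree $3$. Thus all four cross edges are absent.

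Finally, with all four cross pairs non-adjacent, I would exhibit the two six-cycles $D=v_0xyv_2y'x'v_0$ and $D'=v_1xyv_3x'y'v_1$. Checking the structurally forced non-edges together with the four just-established ones shows that neither cycle has a chord, so both $D$ and $D'$ are holes; they are distinct since $D$ contains $v_0,v_2$ while $D'$ contains $v_1,v_3$. However $D$ and $D'$ share the two edges $xy$ and $x'y'$, contradicting the condition {\rm (E1)}. I expect the cross-edge step to be the main obstacle: the key insight is that each cross edge secretly manufactures a short $C$-avoiding path whose would-be prism is spoiled by a degree-$4$ vertex, and it is precisely there that Proposition \ref{prop:length3} does the decisive work.
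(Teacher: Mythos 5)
Your proof is correct. It follows the same broad skeleton as the paper's argument --- assume two prisms $[v_0v_3|xy|v_1v_2]$ and $[v_0v_1|x'y'|v_3v_2]$ of different types, eliminate the four cross edges between $\{x,y\}$ and $\{x',y'\}$, and then exhibit two holes sharing two edges --- but both key steps are executed differently. To kill a cross edge the paper directly builds a $5$-cycle (e.g.\ $v_1v_2v_3x_1y_1v_1$ in its labelling), checks it is chordless, and observes that it shares two edges with $C$; you instead note that the cross edge creates a length-$3$ $C$-avoiding path between a non-adjacent pair of $C$, invoke Proposition \ref{prop:length3} to force the six relevant vertices to induce a prism, and refute this by the $3$-regularity of the prism against a vertex of degree $4$. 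Your route leans on the heavier Proposition \ref{prop:length3} where the paper's is self-contained, but it is more systematic and spares you a chordlessness check. For the final contradiction the paper uses a $6$-hole and a $4$-hole sharing the edges $v_0x_1$ and $x_1x_2$, whereas you use two $6$-holes sharing the matching edges $xy$ and $x'y'$; both are valid. You also explicitly verify via Lemma \ref{wheel} that $x,y,x',y'$ are pairwise distinct, a point the paper leaves implicit.
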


\begin{proof}
Let $C :=v_0 v_1 v_2 v_3 v_0$.
Suppose that
$C$ is contained in
prisms $Y_1$ and $Y_2$ of different types.
Without loss of generality, we may assume that
$Y_1 = [v_0 v_1 | x_1 x_2 | v_3 v_2]$ and
$Y_2 = [v_0 v_3 | y_1 y_2 | v_1 v_2]$
for some $x_1$, $x_2$, $y_1$, $y_2 \in V(G)$. 
(See Figure \ref{fig-2-15}.) 
Suppose that one of $x_1$, $x_2$ and one of $y_1$, $y_2$ are
adjacent.
By the symmetry,
we may assume that $x_1$ and $y_1$ are adjacent.
Then $C_{(1)}:=v_1 v_2 v_3 x_1 y_1 v_1$ is a hole of $G$.
But the edges $v_1 v_2$ and $v_2 v_3$ are contained in both
$C$ and $C_{(1)}$, which is a contradiction to the condition (E1).
Therefore, there is no edge between $\{x_1, x_2\}$ and $\{y_1, y_2\}$.
Then $C_{(2)}:= v_0 x_1 x_2 v_2 y_2 y_1 v_0$ and
$C_{(3)}:= v_0 x_1 x_2 v_1 v_0$ are holes of $G$ and
they share the two edges $v_0 x_1$ and $x_1 x_2$,
which is a contradiction to the condition (E1). 
Hence, 
the prisms containing $C$ must be 
of the same type.
\end{proof}

\begin{figure}[h]
\psfrag{a}{$v_0$}
\psfrag{b}{$v_1$}
\psfrag{c}{$v_2$}
\psfrag{d}{$v_3$}
\psfrag{x1}{$x_1$}
\psfrag{x2}{$x_2$}
\psfrag{y1}{$y_1$}
\psfrag{y2}{$y_2$}
\begin{center}
\includegraphics[scale=0.7]{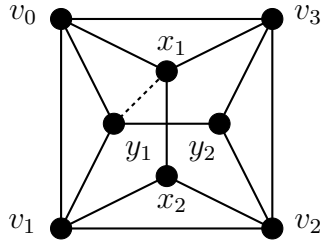}
\caption{A picture for Proof of Lemma \ref{lem:sametype}}
\label{fig-2-15}
\end{center}
\end{figure}

Let $K_t \Box K_2$ be the graph defined by
$V(K_t \Box K_2)=\{x_1, \ldots, x_t \} \cup \{y_1, \ldots, y_t \}$ 
and $E(K_t \Box K_2)=\{x_i x_j \mid 1\leq i<j \leq t\} 
\cup \{y_i y_j \mid 1\leq i<j \leq t\} 
\cup \{x_i y_i \mid 1\leq i \leq t\}$. 
(See Figure \ref{fig:K3-K2_K4-K2} for $t=3,4$.) 
Note that $K_3 \Box K_2$ is a prism.

\begin{figure}[h]
\psfrag{x13}{$x_1$}
\psfrag{x23}{$x_2$}
\psfrag{x33}{$x_3$}
\psfrag{y13}{$y_1$}
\psfrag{y23}{$y_2$}
\psfrag{y33}{$y_3$}
\psfrag{x14}{$x_1$}
\psfrag{x24}{$x_2$}
\psfrag{x34}{$x_3$}
\psfrag{x44}{$x_4$}
\psfrag{y14}{$y_1$}
\psfrag{y24}{$y_2$}
\psfrag{y34}{$y_3$}
\psfrag{y44}{$y_4$}
\begin{center}
\includegraphics[scale=0.7]{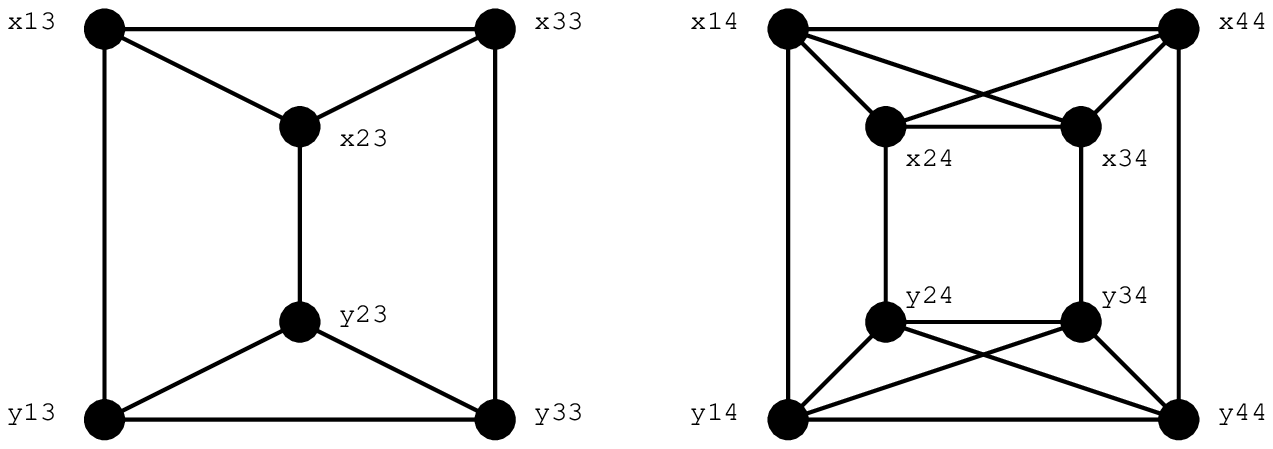}
\caption{$K_3 \Box K_2$ and $K_4 \Box K_2$}
\label{fig:K3-K2_K4-K2}
\end{center}
\end{figure}

\begin{Lem}\label{lem:twopaths}
Let $G$ be a graph satisfying the condition {\rm (E1)} and
$C:=v_0 v_1 v_2 v_3 v_0$ be a hole of length $4$ in $G$. 
Suppose that there exist two distinct $C$-avoiding $(v_0,v_2)$-paths.  
Let $P_1$ and $P_2$ be two distinct shortest $(v_0,v_2)$-paths.
Then the subgraph of $G$ induced by $V(C) \cup V(P_1) \cup V(P_2)$ 
is $K_4 \Box K_2$. 
\end{Lem}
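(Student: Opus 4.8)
The plan is to pin down the local picture with the two structural results already available and then to run, over and over, one mechanism: whenever an edge is present or absent contrary to what the conclusion requires, two short cycles appear that are \emph{forced} to be chordless by the induced prism structure we already know, and these two holes turn out to share two edges, contradicting (E1). First I would apply Proposition~\ref{prop:length3} to each of $P_1$ and $P_2$: both have length $3$, and together with $C$ each spans an \emph{induced} prism. By Lemma~\ref{lem:sametype} the two prisms are of the same type, so after relabelling the vertices of $C$ I may assume $P_1=v_0x_1y_1v_2$ and $P_2=v_0x_2y_2v_2$ with prisms $[v_0v_1\,|\,x_1y_1\,|\,v_3v_2]$ and $[v_0v_1\,|\,x_2y_2\,|\,v_3v_2]$.

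Reading off these induced subgraphs records, for $i\in\{1,2\}$, the edges $x_iv_0,\,x_iv_3,\,y_iv_1,\,y_iv_2,\,x_iy_i$ and the non-edges $x_iv_1,\,x_iv_2,\,y_iv_0,\,y_iv_3$; it also gives four chordless $4$-cycles (faces of the prisms) that are holes of $G$, namely $v_0x_iy_iv_1$ and $v_3x_iy_iv_2$. The target $K_4\Box K_2$ would have top clique $\{v_0,v_3,x_1,x_2\}$, bottom clique $\{v_1,v_2,y_1,y_2\}$, and rungs $v_0v_1,\,v_3v_2,\,x_1y_1,\,x_2y_2$. Comparing with the prism data, the only adjacency facts still undecided are the distinctness $x_1\neq x_2$ and $y_1\neq y_2$, the cross non-edges $x_1y_2\notin E(G)$ and $x_2y_1\notin E(G)$, and the two missing clique edges $x_1x_2\in E(G)$ and $y_1y_2\in E(G)$.

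I would establish these in exactly this order, since each chord check relies on the facts proved before it. For distinctness: if $x_1=x_2$ then the two prism faces $v_0x_1y_1v_1$ and $v_0x_1y_2v_1$ are distinct holes sharing $v_0x_1$ and $v_0v_1$; the case $y_1=y_2$ is symmetric. For the cross non-edges: if $x_1y_2\in E(G)$ then $v_0x_1y_2v_1$ is a hole (its two potential chords $v_0y_2,\,x_1v_1$ are prism non-edges) sharing $v_0x_1$ and $v_0v_1$ with the face $v_0x_1y_1v_1$; the case $x_2y_1$ is symmetric. Finally, for the clique edges I would show $x_1x_2\in E(G)\iff y_1y_2\in E(G)$ and that they cannot both fail, via three configurations, each contradicting (E1): if both fail, the $6$-cycle $v_0x_1y_1v_2y_2x_2v_0$ is chordless (here all nine potential chords are ruled out, crucially using the cross non-edges just proved) and shares $x_1y_1,\,y_1v_2$ with the face $v_3x_1y_1v_2$; if $x_1x_2\in E(G)$ but $y_1y_2\notin E(G)$, the $5$-cycle $x_1x_2y_2v_2y_1x_1$ is a hole sharing $x_2y_2,\,y_2v_2$ with the face $v_3x_2y_2v_2$; the remaining mixed case is symmetric. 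Hence both clique edges are present, and the induced subgraph on these eight distinct vertices is precisely $K_4\Box K_2$.

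The hard part will be this last step. One must correctly guess the auxiliary holes — the $6$-cycle and the two $5$-cycles — and verify their chordlessness; that verification only goes through once the cross non-edges $x_1y_2,\,x_2y_1\notin E(G)$ are in hand, so the real work is in choosing the right logical ordering (distinctness, then cross non-edges, then the clique edges) rather than in any single cycle.
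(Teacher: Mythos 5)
Your proof is correct and follows essentially the same route as the paper: Proposition~\ref{prop:length3} and Lemma~\ref{lem:sametype} reduce everything to two same-type induced prisms, the cross non-edges are ruled out by the identical pair of $4$-holes sharing $v_0x_1$ and $v_0v_1$, and the remaining clique edges are forced by exhibiting two holes that share two edges, contradicting (E1). The only (harmless) divergence is in the last step, where the paper pairs a cycle through $v_0$ with its twin through $v_3$ while you pair a $5$- or $6$-hole with a prism face; you also make explicit the vertex-distinctness checks that the paper leaves implicit.
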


\begin{proof}
By Lemma~\ref{prop:length3}, 
the lengths of $P_1$ and $P_2$ are equal to $3$. 
Let $P_1:=v_0x_1x_2v_2$ and $P_2:=v_0y_1y_2v_2$. 
By Lemma~\ref{prop:length3}, 
$V(C)\cup V(P_1)$ and  $V(C)\cup V(P_2)$ induce two distinct prisms 
$Y_1$ and $Y_2$. 
In addition, by Lemma \ref{lem:sametype}, 
$Y_1$ and $Y_2$ are of the same type. 
Without loss of generality, 
we may assume that 
$Y_1=[v_0 v_1 | x_1 x_2 | v_3 v_2]$ and 
$Y_2=[v_0 v_1 | y_1 y_2 | v_3 v_2]$ 
for some $x_1$, $x_2$, $y_1$, $y_2 \in V(G)$. 

First we show $x_1 y_2, x_2 y_1 \not\in E(G)$.
Suppose that $x_1 y_2 \in E(G)$ or
$x_2 y_1 \in E(G)$.
Without loss of generality,
we may assume that $x_1 y_2 \in E(G)$. 
(See Figure \ref{fig-2-16} (a).)
Then $C_{(1)}:=v_0 x_1 x_2 v_1 v_0$
and $C_{(2)}:=v_0 x_1 y_2 v_1 v_0$
are holes of $G$.
The two edges $v_0 x_1$ and $v_0 v_1$
are contained in both $C_{(1)}$ and $C_{(2)}$,
which is a contradiction to the condition (E1). 
Thus $x_1 y_2, x_2 y_1 \not\in E(G)$.
Second we show $x_1y_1 \in E(G)$. 
Suppose that $x_1$ and $y_1$ are not adjacent.
If  $x_2y_2 \in E(G)$, then let
$C_{(3)}:= v_0 x_1 x_2 y_2 y_1 v_0$
and
$C_{(4)}:= v_3 x_1 x_2 y_2 y_1 v_3$. 
(See Figure \ref{fig-2-16} (b).)
If $x_2y_2 \not\in E(G)$, then let
$C_{(3)}:= v_0 x_1 x_2 v_2 y_2 y_1 v_0$
and
$C_{(4)}:= v_3 x_1 x_2 v_2 y_2 y_1 v_3$. 
(See Figure \ref{fig-2-16} (c).)
Then, in both cases, $C_{(3)}$ and $C_{(4)}$ are holes in $G$.
Moreover,
the $(x_1, y_1)$-section $P$
of $C_{(3)}$ not containing $v_0$
coincides with the $(x_1, y_1)$-section
of $C_{(4)}$ not containing $v_3$.
Since $P$ contains at least $2$ edges, we reach a contradiction.
Thus $x_1$ and $y_1$ are adjacent.
The same argument holds for $x_2$ and $y_2$, 
and so it follows that $x_2$ and $y_2$ are adjacent.
Hence, 
the subgraph of $G$ induced by 
$V(C) \cup V(P_1) \cup V(P_2)$ 
is $K_4 \Box K_2$.
\end{proof}

\begin{figure}[h]
\psfrag{aa}{$v_0$}
\psfrag{ba}{$v_1$}
\psfrag{ca}{$v_2$}
\psfrag{da}{$v_3$}
\psfrag{x1a}{$x_1$}
\psfrag{x2a}{$x_2$}
\psfrag{y1a}{$y_1$}
\psfrag{y2a}{$y_2$}
\psfrag{ab}{$v_0$}
\psfrag{bb}{$v_1$}
\psfrag{cb}{$v_2$}
\psfrag{db}{$v_3$}
\psfrag{x1b}{$x_1$}
\psfrag{x2b}{$x_2$}
\psfrag{y1b}{$y_1$}
\psfrag{y2b}{$y_2$}
\psfrag{ac}{$v_0$}
\psfrag{bc}{$v_1$}
\psfrag{cc}{$v_2$}
\psfrag{dc}{$v_3$}
\psfrag{x1c}{$x_1$}
\psfrag{x2c}{$x_2$}
\psfrag{y1c}{$y_1$}
\psfrag{y2c}{$y_2$}
\psfrag{(a)}{(a)}
\psfrag{(b)}{(b)}
\psfrag{(c)}{(c)}
\begin{center}
\includegraphics[scale=0.7]{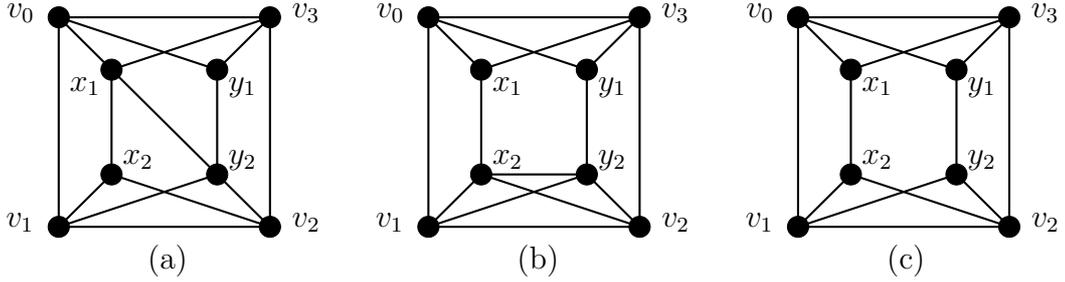}
\caption{Pictures for Proof of Lemma \ref{lem:twopaths}}
\label{fig-2-16}
\end{center}
\end{figure}

\begin{Thm}\label{thm:str}
Let $G$ be a graph satisfying the condition {\rm (E1)}.
For any hole $C:=v_0 v_1 v_2 v_3 v_0$ of length $4$ in $G$,
exactly one of the following holds.
\begin{itemize}
\item[{\rm (a)}]
$G$ has no $C$-avoiding $(v_0, v_2)$-path and
no $C$-avoiding $(v_1, v_3)$-path.
\item[{\rm (b)}]
$C$ is contained in an induced subgraph of $G$
which is isomorphic to $K_t \Box K_2$ for some $t \geq 3$.
\end{itemize}
\end{Thm}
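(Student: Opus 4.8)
The plan is to reduce the claimed dichotomy to a single existence question and then prove that the two outcomes are complementary and mutually exclusive. First I would invoke Corollary~\ref{cor:02-13}: since a $C$-avoiding $(v_0,v_2)$-path exists if and only if a $C$-avoiding $(v_1,v_3)$-path does, condition (a) is \emph{equivalent} to the single statement that no $C$-avoiding $(v_0,v_2)$-path exists. Thus the whole theorem amounts to showing that the existence of such a path is equivalent to (b); exhaustiveness then says every $C$ lands in exactly one case.

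To obtain (b) from the existence of a path, suppose $G$ has a $C$-avoiding $(v_0,v_2)$-path and let $P_1,\dots,P_r$ be all the shortest ones. By Proposition~\ref{prop:length3} each $P_i$ has length $3$ and $V(C)\cup V(P_i)$ induces a prism; by Lemma~\ref{lem:sametype} these prisms are all of the same type, so after relabelling I may write $P_i=v_0 a_i b_i v_2$, where $\{v_0,v_3,a_i\}$ and $\{v_1,v_2,b_i\}$ are the two triangles and $a_ib_i$ is a matching edge. Applying Lemma~\ref{lem:twopaths} to each pair $P_i,P_j$ shows that $V(C)\cup V(P_i)\cup V(P_j)$ induces $K_4\Box K_2$; in particular $a_i$ is adjacent to $a_j$ and non-adjacent to $b_j$ for $j\neq i$, and symmetrically for the $b_i$. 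Assembling these pairwise facts, $V(C)\cup\bigcup_i V(P_i)$ induces a graph in which $\{v_0,v_3,a_1,\dots,a_r\}$ and $\{v_1,v_2,b_1,\dots,b_r\}$ are cliques joined by the perfect matching $v_0v_1,\ v_3v_2,\ a_1b_1,\dots,a_rb_r$ and no other edges, i.e.\ an induced $K_{r+2}\Box K_2$ containing $C$, so (b) holds with $t=r+2\ge 3$. (A single path already yields the prism $K_3\Box K_2$, which suffices for the literal statement; the assembly only pins down the sharp value of $t$.)

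For the mutual exclusivity, suppose (b) holds, say $C$ lies in an induced $K_t\Box K_2$ with cliques $\{x_1,\dots,x_t\}$, $\{y_1,\dots,y_t\}$ and matching $x_iy_i$. A short check shows the only induced $4$-cycles of $K_t\Box K_2$ are of the form $x_ix_jy_jy_ix_i$, so $C$ is such a cycle and its diagonal pairs are $\{v_0,v_2\}=\{x_i,y_j\}$ and $\{v_1,v_3\}=\{x_j,y_i\}$. Choosing any third index $k\notin\{i,j\}$ (possible since $t\ge 3$), the sequence $x_i x_k y_k y_j$ is a path of length $3$; because the $K_t\Box K_2$ is an \emph{induced} subgraph, $x_k$ is non-adjacent to $y_i$ and $y_k$ is non-adjacent to $v_0$ and $v_3$, so neither internal vertex lies in $V(C)\cup X_C$. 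Hence this is a $C$-avoiding $(v_0,v_2)$-path, so (b) forces (a) to fail. Together with the previous paragraph this gives exactly one of (a), (b).

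The main obstacle is the assembly step in the second paragraph: Lemma~\ref{lem:twopaths} only controls pairs of paths, so I must verify that the interior vertices of distinct shortest paths are genuinely distinct and that the pairwise $K_4\Box K_2$ structures are mutually compatible, so that no unexpected chord appears when three or more paths are combined. Both points follow from applying Lemma~\ref{lem:twopaths} to every pair, since its conclusion that, for each pair, the two interior vertices adjacent to $v_0$ are themselves adjacent (and likewise those adjacent to $v_2$) is exactly what excludes a coincidence $a_i=a_j$ or $b_i=b_j$ and simultaneously fixes every cross non-adjacency $a_ib_j$; carrying out this bookkeeping carefully is the only nontrivial part of the argument.
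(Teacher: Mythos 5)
Your proposal is correct and follows essentially the same route as the paper's proof: reduce via Corollary~\ref{cor:02-13} to the existence of a $C$-avoiding $(v_0,v_2)$-path, use Proposition~\ref{prop:length3} and Lemma~\ref{lem:twopaths} on the family of shortest such paths to assemble an induced $K_{s+2}\Box K_2$, and conversely extract a $C$-avoiding path of length $3$ from any induced $K_t\Box K_2$ containing $C$. Your explicit bookkeeping of the assembly step (distinctness of interior vertices and the cross non-adjacencies $a_ib_j$) is slightly more careful than the paper's one-line assertion, but it is the same argument.
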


\begin{proof}
Suppose that (a) does not hold.
By Corollary \ref{cor:02-13},
$G$ has a $C$-avoiding $(v_0, v_2)$-path.
Let $P_1, \ldots, P_s$ $(s \geq 1)$
be the shortest $C$-avoiding $(v_0, v_2)$-paths. 
If $s=1$, then 
$V(C) \cup V(P_1)$ induces a prism $K_3 \Box K_2$ 
by Proposition~\ref{prop:length3}. 
If $s \geq 2$, 
then, by Lemma~\ref{lem:twopaths}, $V(C) \cup V(P_i) \cup V(P_{j})$ 
induces $K_4 \Box K_2$ for any $1 \leq i < j \leq s$. 
Thus the subgraph 
of $G$
induced by $V(C) \cup V(P_1) \cup \cdots \cup V(P_{s})$ is isomorphic 
to $K_{s+2} \Box K_2$ and contains the hole $C$.
Hence (b) holds.

Suppose that (b) holds. 
Let $H$ be an induced subgraph of $G$ isomorphic to 
$K_{t} \Box K_2$ for some $t \geq 3$ containing $C$. 
Take a vertex $x_1$ of $H$ adjacent to $v_0$ 
other than $v_1$ and $v_3$. 
Then there exists a unique vertex $x_2$ which is adjacent to 
both vertices $x_1$ and $v_2$. 
Therefore, we can see that $v_0x_1x_2v_2$ is 
a $C$-avoiding $(v_0,v_2)$-path and so (a) does not hold. 
Hence the theorem holds.
\end{proof}

%%%%%%%%%%%%%%%%%%%%%%%%%%%%%%%%%%%%%%%%%%%%%%%%%%%%%%%%%%%%%%%%%%%%%%%%%
\subsection{A classification of the holes in a graph satisfying
the condition {\rm (E1)}}
%%%%%%%%%%%%%%%%%%%%%%%%%%%%%%%%%%%%%%%%%%%%%%%%%%%%%%%%%%%%%%%%%%%%%%%%%

\begin{Thm}\label{thm:strthm}
Let $G$ be a graph satisfying the condition {\rm (E1)},
and let $C$ be a hole in $G$.
Then exactly one of the following holds:
\begin{itemize}
\item[{\rm (A)}]
There is no $C$-avoiding path between two non-adjacent
vertices of $C$, and $X_C$ is a clique.
\item[{\rm (B)}]
The length of $C$ is equal to $4$,
and $C$ is contained
in an induced subgraph of $G$ isomorphic to
$K_t \Box K_2$ for some $t \geq 3$.
\item[{\rm (C)}]
The length of $C$ is equal to $4$,
and $C$ is contained in
an induced subgraph of $G$ isomorphic to
$K^m_2$ for some $m \geq 3$.
\end{itemize}
Moreover, if (B) happens, then $X_C$ is a clique, and if (C) happens, 
then there is no $C$-avoiding path between two non-adjacent vertices of $C$. 
\end{Thm}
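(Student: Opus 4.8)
The plan is to split on the length of the hole $C$ and, in the length-$4$ case, to combine the two dichotomies already in hand: Theorem~\ref{thm:str} (either there is no $C$-avoiding path between non-adjacent vertices of $C$, or $C$ lies in some $K_t \Box K_2$) and Theorem~\ref{thm:strXC} (either $X_C$ is a clique, or $C$ lies in a $K_{2,2,2}$). First, if $|V(C)| \geq 5$, then Lemma~\ref{newedgedisjoint} gives that there is no $C$-avoiding path between two non-adjacent vertices of $C$, and Lemma~\ref{lem:clique5} gives that $X_C$ is a clique, so (A) holds while (B) and (C) fail for length reasons. So assume $|V(C)|=4$ and write $C=v_0v_1v_2v_3v_0$. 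I would record two cheap implications: a $K_t \Box K_2$ with $t\ge 3$ containing $C$ yields a $C$-avoiding $(v_0,v_2)$-path (as in the proof of Theorem~\ref{thm:str}), and a $K_2^m$ with $m\ge 3$ containing $C$ supplies two non-adjacent vertices of $X_C$, so $X_C$ is not a clique. With these, (A) excludes (B) (since (A) forbids but (B) forces a $C$-avoiding path) and (A) excludes (C) (since (C) forces $X_C$ not to be a clique).

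The heart of the matter, and the step I expect to be the real obstacle, is to rule out (B) and (C) simultaneously, equivalently to prove the ``moreover'' claim that (B) forces $X_C$ to be a clique. So suppose $C$ lies in an induced $H \cong K_t \Box K_2$ with $t\ge 3$, and, for contradiction, that $X_C$ is not a clique; by Theorem~\ref{thm:strXC} there are non-adjacent $x,x'\in X_C$ with $\{v_0,v_1,v_2,v_3,x,x'\}$ inducing $K_{2,2,2}$. I would first note $X_C\cap V(H)=\emptyset$ (a vertex of either clique of $H$ misses one of $v_1,v_2$ or one of $v_0,v_3$), so $x,x'$ are genuinely outside $H$. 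Using $t\ge 3$, pick the extra matched pair $a,b$ of $H$ with $ab\in E(G)$, say $a\sim v_0,v_3$, $a\not\sim v_1,v_2$ and $b\sim v_1,v_2$, $b\not\sim v_0,v_3$; then $D_1:=v_0abv_1v_0$ and $D_2:=v_3abv_2v_3$ are holes of $G$.

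The key computation is that every $w\in X_C$ is adjacent to both $a$ and $b$. Indeed, if $w\not\sim a$ and $w\not\sim b$, then $w\,v_0\,a\,b\,v_2\,w$ is an induced $5$-cycle sharing the two edges $v_0a$ and $ab$ with $D_1$; if $w\sim a$ but $w\not\sim b$ then $w\,a\,b\,v_1\,w$ is a hole sharing $ab$ and $bv_1$ with $D_1$; and the case $w\not\sim a,\ w\sim b$ is symmetric via $D_2$. Each possibility violates (E1), so $w\sim a$ and $w\sim b$. Applying this to $x$ and $x'$, both are adjacent to $a$; since $a\not\sim v_1$, $a\not\sim v_2$, and $x\not\sim x'$, the cycles $a\,x\,v_1\,x'\,a$ and $a\,x\,v_2\,x'\,a$ are holes sharing the two edges $ax$ and $ax'$, contradicting (E1). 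This establishes that (B) forces $X_C$ to be a clique, hence the incompatibility of (B) and (C).

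Finally I would assemble the trichotomy. For $|V(C)|=4$: if Theorem~\ref{thm:str}(b) holds we are in case (B); otherwise there is no $C$-avoiding path between non-adjacent vertices, and Theorem~\ref{thm:strXC} puts us in (A) when $X_C$ is a clique and in (C) (with $m=3$) otherwise. ``Exactly one'' follows from the incompatibilities above together with the last paragraph. The remaining ``moreover'' clause, that (C) forbids a $C$-avoiding path between non-adjacent vertices, is immediate: (C) excludes (B), so Theorem~\ref{thm:str}(a) must hold. The only delicate point is the coexistence argument of the third paragraph; everything else is direct bookkeeping on top of the cited structural results.
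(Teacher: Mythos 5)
Your proposal is correct and follows essentially the same route as the paper: handle length at least $5$ via Lemma~\ref{newedgedisjoint} and Lemma~\ref{lem:clique5}, reduce the length-$4$ case to Theorems~\ref{thm:str} and~\ref{thm:strXC}, and prove the crucial incompatibility of (B) and (C) by an adjacency case analysis showing the non-adjacent pair in $X_C$ must be joined to both vertices of the prism's extra matched pair, then exhibiting two holes sharing two edges. The paper's proof is the same argument with the roles of the shared edges placed at $v_0$ rather than at $a$ in the final contradiction, a purely cosmetic difference.
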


\begin{proof}
First, we show by contradiction 
that (B) and (C) cannot happen at the same time. 
Suppose that both (B) and (C) hold. 
Let $C:=v_0 v_1v_2 v_3 v_0$ be a hole of length $4$ 
contained in both a prism $Y$ and an induced subgraph $K$ 
isomorphic to $K_{2,2,2}$. 
Without loss of generality, we may assume that $Y=[v_0 v_1 |x y|v_3v_2 ]$ 
and $V(K)=\{v_0, v_1, v_2, v_3, u_1, u_2\}$ 
for some $x$, $y$, $u_1$, $u_2 \in V(G)$. 
(See Figure \ref{fig-2-18}.) 
If $u_1 x \not\in E(G)$ and $u_1y \not\in E(G)$,
then the hole $u_1 v_0 x y v_2 u_1$
shares the two edges $u_1v_0$ and $u_1v_2$
with the hole $u_1v_0u_2v_2u_1$, which is a contradiction 
to the condition (E1).
If $u_1 x \not\in E(G)$ and $u_1y \in E(G)$,
then the hole $u_1 v_0 x y u_1$
shares the two edges $v_0 x$ and $xy$
with the hole $v_0 xy v_1 v_0$, which is a contradiction.
If $u_1 x \in E(G)$ and $u_1y \not\in E(G)$,
then the hole $u_1 x y v_2 u_1$
shares the two edges $xy$ and $y v_2$
with the hole $v_2 yx v_3v_2$, which is a contradiction.
Thus $u_1 x \in E(G)$ and $u_1y \in E(G)$.
By applying the same argument for $u_2$ instead of $u_1$,
we can show that $u_2 x \in E(G)$ and $u_2y \in E(G)$.
Then the hole
$u_1 v_0 u_2 y u_1$
shares the two edges $u_1v_0$ and $u_2v_0$
with the hole $u_1v_0u_2v_2u_1$, which is a contradiction. 
Thus we have shown that (B) and (C) cannot happen at the same time.

Now, we show the theorem. 
If the length of $C$ is at least $5$, then 
(A) holds by Lemmas \ref{lem:clique5} 
and \ref{newedgedisjoint} and neither (B) nor (C) can happen.
Therefore, we assume that the length of $C$ is equal to $4$. 
Suppose that (B) holds.
Then (A) does not hold 
since there is a $C$-avoiding path 
between two non-adjacent vertices of $C$, 
and (C) does not hold 
by the previous argument. 
Next suppose that (B) does not hold. 
Then it follows from Theorem \ref{thm:str} 
that there is no $C$-avoiding path 
between two non-adjacent vertices of $C$.  
If $X_C$ is a clique, then (A) holds 
and (C) cannot happen by Theorem \ref{thm:strXC}. 
If $X_C$ is not a clique, 
then (A) does not holds obviously 
and (C) happen by Theorem \ref{thm:strXC}. 
Hence exactly one of (A), (B), (C) holds. 

If (B) happens, then (C) cannot happen 
and so $X_C$ is a clique 
by Theorem~\ref{thm:strXC}. 
If (C) happens, then (B) cannot happen 
and so there is no $C$-avoiding path between 
two non-adjacent vertices of $C$ 
by Theorem \ref{thm:str}.
\end{proof}

\begin{figure}[h]
\psfrag{a}{$v_0$}
\psfrag{b}{$v_1$}
\psfrag{c}{$v_2$}
\psfrag{d}{$v_3$}
\psfrag{e}{$x$}
\psfrag{f}{$y$}
\psfrag{g}{$u_1$}
\psfrag{h}{$u_2$}
\begin{center}
\includegraphics[scale=0.7]{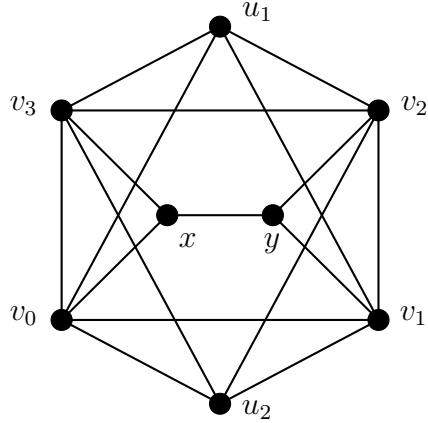}
\caption{A picture for Proof of Theorem \ref{thm:strthm}}
\label{fig-2-18}
\end{center}
\end{figure}

%%%%%%%%%%%%%%%%%%%%%%%%%%%%%%%%%%%%%%%%%%%%%%%%%%%%%%%%%%%%%%%%%%%%%%%%
\section{Operations on graphs satisfying the condition {\rm (E1)}}
%%%%%%%%%%%%%%%%%%%%%%%%%%%%%%%%%%%%%%%%%%%%%%%%%%%%%%%%%%%%%%%%%%%%%%%%
\subsection{Deleting an edge from a graph}
%%%%%%%%%%%%%%%%%%%%%%%%%%%%%%%%%%%%%%%%%%%%%%%%%%%%%%%%%%%%%%%%%%%%%%%%

For a graph $G$ and an edge $uv$ in $G$, 
we denote by $G - uv$ 
the graph obtained from $G$ by deleting the edge $uv$.

\begin{figure}[h]
\psfrag{u}{$u$}
\psfrag{v}{$v$}
\psfrag{G}{$G$}
\psfrag{C}{$C$}
\psfrag{G'}{$G-uv$}
\begin{center}
\includegraphics[scale=0.7]{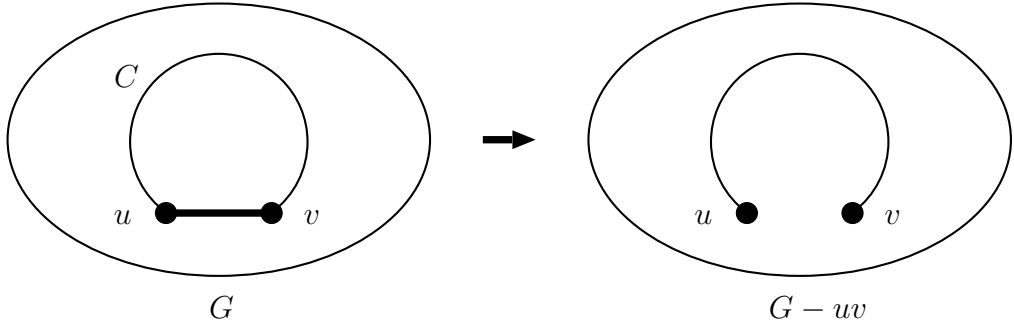}
\caption{Deleting an edge from a graph}
\label{fig-3-1}
\end{center}
\end{figure}

\begin{Lem}\label{noavoid}
Let $G$ be a graph satisfying the condition {\rm (E1)}.
Suppose that there exists a hole $C$ and
two adjacent vertices $u$ and $v$ on the hole $C$
such that
there is no $C$-avoiding $(u,v)$-path.
If $G$ is $K_{2,2,2}$-free,
then the following hold:
\begin{itemize}
\item[{\rm (1)}]
$G-uv$ also satisfies the condition {\rm (E1)}.
\item[{\rm (2)}]
If the number of holes of $G$ is $h$,
then that of $G-uv$ is at most $h-1$.
\item[{\rm (3)}]
$G-uv$ is also $K_{2,2,2}$-free.
\item[{\rm (4)}]
If $G$ is prism-free, then
$G- uv$ is still prism-free.
\end{itemize}
\end{Lem}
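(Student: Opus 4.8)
The plan is to reduce all four assertions to a single structural fact: \emph{deleting $uv$ creates no new hole}, i.e.\ every hole of $G-uv$ is already a hole of $G$. Granting this, parts (1)--(3) become bookkeeping and (4) is analogous. For (2): a hole of $G$ survives in $G-uv$ exactly when it avoids the edge $uv$, and since $C$ itself uses $uv$, the surviving holes form a subset of the $h$ holes of $G$ missing at least $C$; with no new holes this yields at most $h-1$. For (1): the holes of $G-uv$ are a subset of those of $G$, so condition (E1), valid for $G$, is inherited. For (3): if $G-uv$ had an induced $K_{2,2,2}$ on a set $W$, then either $\{u,v\}\not\subseteq W$, so $G[W]=(G-uv)[W]$ is an induced $K_{2,2,2}$ of $G$, contradicting the hypothesis; or $u,v\in W$, forcing $u,v$ into the same part (they are non-adjacent in $G-uv$), whence the $4$-cycle $uavbu$ through the two vertices $a,b$ of another part is an induced hole of $G-uv$ that acquires the chord $uv$ in $G$, contradicting ``no new hole''. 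Part (4) runs identically, using that two non-adjacent vertices $x_i,y_j$ $(i\neq j)$ of a prism always have two non-adjacent common neighbours $x_j,y_i$, which again give a $4$-hole of $G-uv$ chorded by $uv$ in $G$.

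Before the central fact I would first pin down the local structure of $C$. By Theorem~\ref{thm:strthm} exactly one of (A), (B), (C) holds for $C$. Since $G$ is $K_{2,2,2}$-free, (C) is impossible. I claim (B) is impossible as well: if $C$ were contained in a $K_t\Box K_2$ $(t\ge 3)$, then using a vertex of the ``third rung'' one checks that \emph{every} edge of $C$ is joined by a $C$-avoiding path of length $3$, contradicting the hypothesis that there is no $C$-avoiding $(u,v)$-path. Hence (A) holds, which hands me the two tools I need: $X_C$ is a clique, and there is no $C$-avoiding path between two non-adjacent vertices of $C$.

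The heart of the argument is the no-new-hole claim. Suppose $C'$ is a hole of $G-uv$ that is not a hole of $G$; since the only edge difference is $uv$, the vertices $u,v$ lie on $C'$ and are non-adjacent there, splitting $C'$ into two sections $R_1,R_2$, each a $(u,v)$-path of length $\ge 2$ meeting only in $\{u,v\}$. I will show one of them is $C$-avoiding, the desired contradiction. First, no internal vertex of either section lies in $X_C$: if $w\in X_C$ were internal to $R_1$, then $w$ is adjacent to both $u$ and $v$, forcing $R_1=uwv$; as $X_C$ is a clique and each of its vertices is adjacent to all of $V(C)$, no internal vertex of $R_2$ could then lie in $V(C)\cup X_C$ without chording $C'$, so $R_2$ would be $C$-avoiding. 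Thus I may assume both sections avoid $X_C$ internally. Next, for a section $R_i$ list the vertices of $C$ it meets in order along $R_i$: between two consecutive such vertices the intervening piece of $R_i$ avoids $V(C)\cup X_C$, hence is a $C$-avoiding path, so by (A) (and chordlessness of $C$) those two $C$-vertices are adjacent on $C$. Therefore the $C$-vertices along $R_i$ trace a sub-path of $C$ from $u$ to $v$; since $uv\in E(C)$, this is either the edge $uv$ alone (so $R_i$ has no internal $C$-vertex and is $C$-avoiding) or the complementary arc, which contains \emph{all} remaining vertices of $C$. The latter cannot hold for both $R_1$ and $R_2$ since they share only $u,v$, so at least one section is $C$-avoiding.

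The main obstacle is precisely this last step: excluding a new hole whose two $u$–$v$ sections both wander through $V(C)\cup X_C$. The ideas that defuse it are that $X_C$ being a clique collapses any $X_C$-excursion to a length-$2$ section and forces the opposite section to be $C$-avoiding, and that the ban on $C$-avoiding paths between non-adjacent vertices of $C$ makes the $C$-vertices of each section march monotonically along $C$, so two sections that meet only at $u,v$ cannot both traverse the long arc. The remaining effort is the bookkeeping above; care is needed only to handle the degenerate readings (a length-$2$ section, the single-edge ``sub-path of $C$'') and to confirm that the $K_{2,2,2}$- and prism-deletion arguments really produce genuine $4$-holes of $G-uv$ chorded by $uv$ in $G$.
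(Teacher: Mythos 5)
Your proof is correct, and it shares the paper's overall skeleton: first establish that deleting $uv$ creates no new hole, then deduce (1) and (2) immediately and handle (3) and (4) separately. The execution of the two key steps differs, though. For the central no-new-hole claim the paper splits a putative new hole $C'$ into its two $(u,v)$-sections, asserts without further argument that a section of length at least $3$ would be a $C$-avoiding $(u,v)$-path, and only works out the case where both sections have length $2$ (using that $X_C$ is a clique, Corollary~\ref{cor:clique4}). That assertion is precisely the point your proof takes seriously: a long section could a priori pass through vertices of $V(C)\cup X_C$. You close this by first showing that alternative (A) of Theorem~\ref{thm:strthm} must hold for $C$ (case (C) dies by $K_{2,2,2}$-freeness, case (B) because inside $K_t\Box K_2$ every edge of $C$ admits a $C$-avoiding path --- of length $2$ for the triangle edges rather than $3$, a harmless slip in your write-up), and then using (A) to force the $C$-vertices met by each section to trace an arc of $C$, so that one section is internally disjoint from $V(C)\cup X_C$ and hence $C$-avoiding. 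This is more work than the paper spends, but it is a genuine tightening of the terser argument. For (3) and (4) the paper argues directly from the hypothesis: the second part $\{x,y\}$ of a surviving $K_{2,2,2}$ yields two length-$2$ $(u,v)$-paths, and since $x,y$ are non-adjacent and $X_C$ is a clique one of them would be $C$-avoiding. You instead reduce both statements to the no-new-hole claim by exhibiting a $4$-hole of $G-uv$ through $u$ and $v$ inside the $K_{2,2,2}$ (respectively the prism) that acquires the chord $uv$ in $G$; this is equally valid and has the merit of reusing the main claim instead of re-running the avoiding-path analysis.
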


\begin{proof}
First we show that there is no new hole is created 
by deleting the edge $uv$ from $G$.
Suppose that
there is a hole $C'$ in $G- uv$
which is not a hole in $G$.
Then the edge $uv$ is a chord of $C'$ in $G$.
Now consider the two distinct $(u,v)$-sections $P_1$ and
$P_2$ of $C'$.
If $|E(P_1)| \ge 3$ or $|E(P_2)| \ge 3$, then
$P_1$ or $P_2$ is a $C$-avoiding $(u, v)$-path,
which contradicts the hypothesis.
Thus $|E(P_1)|=2$ and $|E(P_2)|=2$.
Then $P_1=uwv$ and $P_2=uw'v$ for some vertices $w$ and $w'$ of $G$.
Since $G$ does not have a $C$-avoiding $(u, v)$-path
by the hypothesis,
it holds that $\{w,w'\} \subseteq X_C \cup V(C)$.
However, if $w \in V(C)$,
then at least one of $uw$ or
$vw$ is a chord of $C$, which is a contradiction.
If $w, w' \in X_C$,
then $w$ and $w'$ are adjacent by Corollary~\ref{cor:clique4}
since $G$ is $K_{2,2,2}$-free.
Then the edge $ww'$ is a chord of $C'$ in $G-uv$,
which is a contradiction.
Therefore $G-uv$ has no hole other than the ones of $G$.
Since no new hole is created in $G-uv$, 
the graph $G-uv$ still satisfies 
the condition (E1).  
In addition, since $C$ is not a hole in $G-uv$ and the number of holes 
of $G-uv$ is at most $h-1$. 
Suppose that $G-uv$ has an induced subgraph $H$ isomorphic to $K_{2,2,2}$. 
Then $u$ and $v$ consist in a part of $H$. 
Let $x$ and $y$ be the vertices of another part of $H$. 
Obviously neither $x$ nor $v$ is on $C$ and $uxv$ and $uyv$ are paths in $G$. 
Since $x$ and $y$ are not adjacent, one of them should be 
a $C$-avoiding path by Corollary~\ref{cor:clique4}, which is a contradiction. 
A similar argument can applied to reach a contradiction 
if $G-uv$ contains a prism.
\end{proof}

%%%%%%%%%%%%%%%%%%%%%%%%%%%%%%%%%%%%%%%%%%%%%%%%%%%%%%%%%%%%%%%%%%%%%%%%%
\subsection{Adding an edge to a graph}
%%%%%%%%%%%%%%%%%%%%%%%%%%%%%%%%%%%%%%%%%%%%%%%%%%%%%%%%%%%%%%%%%%%%%%%%%

For a graph $G$ and two non-adjacent vertices $u$ and $v$ in $G$, 
we denote by $G+uv$ 
the graph obtained from $G$ by joining $u$ and $v$ by an edge. 

\begin{figure}[h]
\psfrag{u}{$u$}
\psfrag{v}{$v$}
\psfrag{G}{$G$}
\psfrag{K}{$K$}
\psfrag{G'}{$G+uv$}
\begin{center}
\includegraphics[scale=0.7]{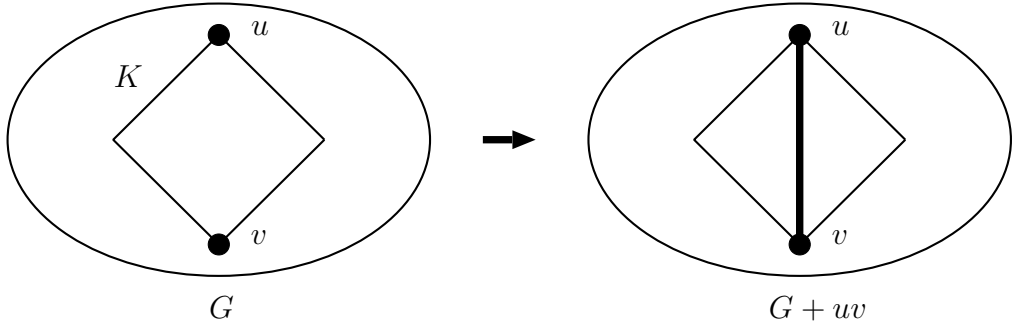}
\caption{Adding an edge to a graph}
\label{fig-3-2}
\end{center}
\end{figure}

\begin{Lem}\label{lem:addingdiagonaledge}
Let $G$ be a graph satisfying the condition {\rm (E1)} 
and $C:=v_0v_1v_2v_3v_0$ be a hole of length $4$ in $G$. 
Then adding an edge joining two non-adjacent vertices of $C$ 
does not create a new hole. 
\end{Lem}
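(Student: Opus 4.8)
The plan is to argue by contradiction, reducing the creation of a new hole to the existence of a $C$-avoiding path and then invoking the prism structure forced by Proposition~\ref{prop:length3}. The two diagonals $v_0v_2$ and $v_1v_3$ are handled by the same argument after relabeling (and Corollary~\ref{cor:02-13} guarantees that a $C$-avoiding $(v_0,v_2)$-path exists exactly when a $C$-avoiding $(v_1,v_3)$-path does), so it suffices to treat the edge $v_0v_2$. Suppose that $G+v_0v_2$ has a hole $C'$ that is not a hole of $G$. Because $v_0v_2$ is the only edge added, $C'$ must contain it, so that $P:=C'-v_0v_2$ is a $(v_0,v_2)$-path of length $|E(C')|-1\ge 3$; it is an \emph{induced} path of $G$ since $C'$ is an induced cycle of $G+v_0v_2$ and $v_0v_2\notin E(G)$.

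First I would verify that $P$ is a $C$-avoiding path. No internal vertex of $P$ can be $v_1$ or $v_3$: if, say, $v_1$ were internal, then the three vertices $v_0,v_1,v_2$ of the induced cycle $C'$ would span the triangle $v_0v_1v_2$ of $G+v_0v_2$, which is impossible for an induced cycle of length at least $4$. Likewise no internal vertex of $P$ lies in $X_C$: such a vertex is adjacent to both endpoints $v_0$ and $v_2$, and since $P$ is induced it would then be simultaneously the successor of $v_0$ and the predecessor of $v_2$ on $P$, forcing $|E(P)|=2$, contrary to $|E(P)|\ge 3$. Hence the internal vertices of $P$ avoid $V(C)\cup X_C$, so $P$ is a $C$-avoiding $(v_0,v_2)$-path.

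The existence of a $C$-avoiding $(v_0,v_2)$-path now lets me apply Proposition~\ref{prop:length3} (equivalently Theorem~\ref{thm:str}): a shortest such path $Q=v_0x_1x_2v_2$ has length $3$ and $V(C)\cup V(Q)$ induces a prism, which I may take to be $[v_0v_1\,|\,x_1x_2\,|\,v_3v_2]$. This prism is the crux of the argument and the step I expect to be the main obstacle, since it is exactly the configuration in which adding $v_0v_2$ behaves most rigidly and where a naive argument (valid only when $C$ lies in no prism) breaks down. Here the $4$-cycle $C^{\ast}:=v_0x_1x_2v_2v_0$ closed by the new edge and the hole $C_{\alpha}:=v_0v_1x_2x_1v_0$ of the prism share the two edges $v_0x_1$ and $x_1x_2$, so that $G+v_0v_2$ would contain two holes sharing two edges, contradicting the condition (E1). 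The delicate point is to read off from the prism that the rib $v_0x_1$ and the matching edge $x_1x_2$ belong to both cycles; this is precisely the structural rigidity supplied by Proposition~\ref{prop:length3}.

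Finally, I would observe that this contradiction does not depend on the length of $P$: once a single $C$-avoiding $(v_0,v_2)$-path is known to exist, the shortest one already produces the prism and hence the forbidden pair of holes, regardless of which particular new hole $C'$ was assumed. Thus the assumption that $G+v_0v_2$ has a hole not present in $G$ is untenable, and by the symmetric treatment of the two diagonals, adding either edge joining two non-adjacent vertices of $C$ creates no new hole.
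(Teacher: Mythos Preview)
Your reduction is correct up to the point where you obtain a prism: $P:=C'-v_0v_2$ is indeed an induced $(v_0,v_2)$-path of length at least $3$, and your argument that it is $C$-avoiding is clean. The gap is in the final step. The two holes $C^{\ast}=v_0x_1x_2v_2v_0$ and $C_{\alpha}=v_0v_1x_2x_1v_0$ that you exhibit share two edges only in $G+v_0v_2$; the cycle $C^{\ast}$ uses the new edge $v_0v_2$ and is not a cycle of $G$ at all. But condition (E1) is a hypothesis on $G$, not on $G+v_0v_2$, so this is not a contradiction. In fact the three $4$-holes of the prism $[v_0v_1\,|\,x_1x_2\,|\,v_3v_2]$ in $G$ pairwise share exactly one edge, so the prism is perfectly compatible with (E1); your argument has only rediscovered that adding the diagonal to a prism produces a graph violating (E1), which confirms rather than refutes the possibility of a new hole. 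After invoking Proposition~\ref{prop:length3} you have discarded the original hole $C'$, and nothing you derive afterwards rules it out.

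The paper's proof avoids this by never leaving $G$. It keeps the path $P=C'-v_0v_2$ and studies how $v_1$ and $v_3$ attach to its internal vertices: if $v_1$ were adjacent to no internal vertex of $P$, then $P$ together with $v_0v_1$ and $v_1v_2$ would already be a hole of $G$ sharing the two edges $v_0v_1,v_1v_2$ with $C$, so each of $v_1,v_3$ must see some internal vertex of $P$. One then checks that no internal vertex can see both, and chooses neighbours $y$ of $v_1$ and $z$ of $v_3$ on $P$ so that the $(y,z)$-section between them is chordless with respect to $v_1,v_3$; closing this section through $v_1v_2v_3$ gives a hole of $G$ sharing $v_1v_2$ and $v_2v_3$ with $C$, contradicting (E1) in $G$. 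To repair your approach you would need to produce two holes of $G$ itself sharing two edges; the prism alone does not supply them.
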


\begin{proof}
By contradiction. Suppose a new hole $C'$ is created by adding 
an edge joining two vertices $v_0$ and $v_2$. 
Since $C'-v_0v_2$ together with edges $v_0v_1$ and $v_2v_1$ form 
a cycle of length at least $4$ in $G$ sharing two edges with $C$, 
$v_1$ must be adjacent to a vertex of $V(C')\setminus \{v_0,v_2\}$. 
Similarly, $v_3$ is adjacent to a vertex of $V(C')\setminus \{v_0,v_2\}$. 
If a vertex $x$ on $C$ is joined to both $v_1$ and $v_3$, 
then $v_0v_1xv_3v_0$ is a hole of $G$ sharing two edges $v_0v_1$ 
and $v_0v_3$ with the hole $C$, a contradiction. 
Let $y$ and $z$ be vertices of $C$ adjacent to $v_1$ and $v_3$, respectively, 
such that a shorter $(y,z)$-section $P$ of $C'$ is the shortest. 
Then no interior vertex of $P$ is adjacent to $v_0$ or $v_3$. 
Then $P$ together with the edges $v_1v_2, v_2v_3$, $v_1y$, $v_3z$ 
form a hole in $G$. However, this hole shares the two edges 
$v_1v_2$, $v_2v_3$ with $C$, which is a contradiction. 
Thus no new hole is created by adding an edge joining two vertices 
$v_0$ and $v_2$. By symmetry, no new hole is created by adding 
an edge joining two vertices $v_1$ and $v_3$. Hence adding an edge 
joining two non-adjacent vertices of $C$ does not create a new hole. 
\end{proof} 

\begin{Lem}\label{lem:K+e}
Let $G$ be a graph satisfying the condition {\rm (E1)}.
Let $m$ be the maximum integer such that $G$ contains an induced subgraph
$K$ isomorphic to $K_2^m$.
Let $u,v \in V(K)$ be two non-adjacent vertices of $K$.
Then the following hold:
\begin{itemize}
\item[{\rm (1)}]
$G+uv$ also satisfies the condition {\rm (E1)}.
\item[{\rm (2)}]
If $m \geq 3$ and the number of holes of $G$ is $h$,
then that of $G+uv$ is at most $h-2$.
\item[{\rm (3)}]
If $G$ is prism-free, then
$G+uv$ is still prism-free.
\end{itemize}
\end{Lem}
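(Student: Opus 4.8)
The plan is to reduce parts (1) and (2) to a single structural fact: inserting $uv$ creates no new hole. Granting this, (1) is immediate, because every hole of $G+uv$ is then already a hole of $G$, and two holes of $G$ share at most one edge by (E1); so any two holes of $G+uv$ share at most one edge as well. For (2) it then suffices to exhibit at least two holes of $G$ destroyed by $uv$. Here I would use the structure of $K\cong K_2^m$: writing $\{u,v\}$ for the part containing $u,v$, for each other part $\{a_j,b_j\}$ $(j=2,\dots,m)$ the set $\{u,v,a_j,b_j\}$ induces a $4$-hole $Q_j:=u\,a_j\,v\,b_j\,u$ of $G$, the two within-part pairs being the non-edges. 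When $m\ge 3$ there are $m-1\ge 2$ such holes, each containing $u,v$ as a non-adjacent pair, so $uv$ becomes a chord and destroys $Q_j$. Combined with ``no new hole,'' the number of holes drops by at least $m-1\ge 2$, which is (2).

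The core is therefore the no-new-hole claim. A new hole of $G+uv$ must use the edge $uv$, hence has the form $C'=u\,x_1\cdots x_k\,v\,u$ where $P:=u\,x_1\cdots x_k\,v$ is an induced $(u,v)$-path of $G$ of length at least $3$ (so $k\ge 2$). Choosing another part $\{a,b\}$ of $K$ (which exists since $m\ge 2$), the set $\{u,a,v,b\}$ induces a $4$-hole $Q:=u\,a\,v\,b\,u$ of $G$. I would then check that $P$ is a $Q$-avoiding path: its internal vertices are distinct from $u$ and $v$; they cannot equal $a$ or $b$, since a vertex adjacent to both $u$ and $v$ cannot be an internal vertex of an induced $(u,v)$-path of length $\ge 3$; and for the same reason no internal vertex lies in $X_Q\subseteq N_G(u)\cap N_G(v)$. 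Thus the internal vertices of $P$ avoid $V(Q)\cup X_Q$. Now apply the classification Theorem~\ref{thm:strthm} to $Q$: when $m\ge 3$, the inclusion $Q\subseteq K\cong K_2^m$ forces case~(C), and the ``moreover'' clause of Theorem~\ref{thm:strthm} asserts that there is no $Q$-avoiding path between the non-adjacent vertices $u,v$ — contradicting $P$. This rules out new holes for $m\ge 3$, settling (2) and the $m\ge 3$ instance of (1).

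The main obstacle is the low-multiplicity case $m=2$ of part (1). Then $K=Q$ is itself the maximal $K_2^2$, so maximality only excludes case~(C), and $Q$ is of type~(A) or type~(B) of Theorem~\ref{thm:strthm}. Type~(A) again yields no $Q$-avoiding $(u,v)$-path and the argument goes through. Type~(B), in which $Q$ sits inside a prism $K_t\Box K_2$, is genuinely delicate: there a $Q$-avoiding $(u,v)$-path of length $3$ does exist, so a new $4$-hole is created, and one checks that it can share two edges with a surviving hole, so (E1) can fail. The remedy is to exclude type~(B) by assuming $G$ is prism-free — precisely the hypothesis isolated in part (3), and the regime in which this lemma is intended to be used. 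Under that standing assumption every auxiliary $4$-hole is of type~(A) or (C), case~(B) is vacuous, and the no-new-hole argument closes uniformly over all $m$; I would flag this dependence explicitly as the crux of (1).

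For (3) I would argue directly. If $G+uv$ contained an induced prism $R$ while $G$ does not, then $R$ must use the new edge $uv$, so $u,v\in V(R)$. I would split according to whether $uv$ is a triangle edge or a matching edge of $R$, and on the five remaining vertices invoke Lemma~\ref{multi}(2) — every common neighbor of the non-adjacent pair $u,v$ lies in $X_K\cup V(K)$ — together with the clique property $X_K$ of Lemma~\ref{multi}(1) to pin down their adjacencies to $u$ and $v$. From the subgraph $R$ with the edge $uv$ removed one then expects to extract either two holes of $G$ sharing two edges, contradicting (E1), or an induced $K_2^{m+1}$, contradicting the maximality of $m$. I anticipate this final step to be routine but bookkeeping-heavy, and it is the only place in (3) where the maximality of $K$ is used.
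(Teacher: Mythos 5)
Your route to (1) and (2) is genuinely different from the paper's: you derive the ``no new hole'' claim from the classification in Theorem~\ref{thm:strthm}, applied to a $4$-hole $Q=u\,a\,v\,b\,u$ inside $K$, whereas the paper invokes a separate chord-chasing lemma (Lemma~\ref{lem:addingdiagonaledge}) asserting that adding a diagonal to \emph{any} $4$-hole of an (E1)-graph creates no new hole, after which (1) and (2) are immediate. Your argument for $m\ge 3$ is correct and complete: an internal vertex of an induced $(u,v)$-path of length at least $3$ cannot be adjacent to both $u$ and $v$, hence avoids $V(Q)\cup X_Q$, and case~(C) of Theorem~\ref{thm:strthm} forbids such a $Q$-avoiding path. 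Your count for (2) matches the paper's.

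More importantly, the obstruction you isolate at $m=2$ is not a defect of your proof but a genuine error in part (1) as stated, and in Lemma~\ref{lem:addingdiagonaledge} as well. Take $G$ to be the $3$-prism $[x_1y_1|x_2y_2|x_3y_3]$: it satisfies (E1) (its three $4$-holes pairwise share exactly one edge), its maximal induced $K_2^m$ has $m=2$, and one may take $K$ to be the hole $x_1y_1y_2x_2x_1$ with $u=x_1$, $v=y_2$. Adding $x_1y_2$ creates the new hole $x_1y_2y_3x_3x_1$, which shares the two edges $y_2y_3$ and $y_3x_3$ with the surviving hole $x_2y_2y_3x_3x_2$, so $G+uv$ violates (E1). (The proof of Lemma~\ref{lem:addingdiagonaledge} breaks at the assertion that $P$ together with $v_1v_2$, $v_2v_3$, $v_1y$, $v_3z$ forms a hole: here $y$ is adjacent to $v_2$.) Your proposed repair --- assume $G$ prism-free so that type~(B) of Theorem~\ref{thm:strthm} is excluded, closing the argument uniformly in $m$ --- is exactly right, and it is harmless for the paper, since the lemma is only invoked in Theorem~\ref{thm:mainCaseB} for prism-free $G$ with $m\ge 3$. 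For (3), your plan works but is heavier than necessary: the paper simply observes that a prism of $G+uv$ through the edge $uv$ supplies vertices $x,y$ with $x\sim u$, $y\sim v$, $x\sim y$, $x\not\sim v$, $y\not\sim u$, so that $uxyv$ is a $C^*$-avoiding path for a $4$-hole $C^*$ of $K$ through $u$ and $v$, contradicting prism-freeness of $G$ via Theorem~\ref{thm:str}; no appeal to Lemma~\ref{multi} or to the maximality of $K$ is needed (the paper writes out only the case where $uv$ is a matching edge of the new prism, but the triangle-edge case you mention is disposed of by the same path).
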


\begin{proof}
Since $u$ and $v$ are non-adjacent vertices of a hole of length $4$, 
by Lemma~\ref{lem:addingdiagonaledge}, $G+uv$ has no hole other than 
the ones of $G$ and so (1) holds. 

If $m \geq 3$, then $u$ and $v$ belong to
at least two distinct holes of length $4$ in $K$ and these holes in $G$ 
are not holes anymore in $G+uv$  as they
become $4$-cycles with chord $uv$. By the previous argument, 
no hole is created by joining $u$ and $v$ and so $G+uv$ 
has at most $h-2$ holes. Thus (2) holds. 

To show (3), suppose that $G+uv$ contains a prism $[xy|uv|zw]$. 
Then $x$ is not adjacent to $v$, and $y$ is not adjacent to $u$ in $G$. 
Therefore $x$ and $y$ cannot belong to $K$. 
Let $C^*$ be a hole of $K$ containing $u$ and $v$. 
Then $uxyv$ is a $C^*$-avoiding path, a contradiction. Thus (3) holds. 
\end{proof}

%%%%%%%%%%%%%%%%%%%%%%%%%%%%%%%%%%%%%%%%%%%%%%%%%%%%%%%%%%%%%%%%%%%%%%%%%%
\subsection{Breaking prisms in a graph}
%%%%%%%%%%%%%%%%%%%%%%%%%%%%%%%%%%%%%%%%%%%%%%%%%%%%%%%%%%%%%%%%%%%%%%%%%%

Suppose that a graph $G$ has
an induced subgraph $H$ isomorphic to
$K_t \Box K_2$ for $t \geq 3$.
Let $V(H)=V_x \cup V_y= \{ x_1, \ldots, x_t \}
\cup \{ y_1, \ldots, y_t \}$
where $V_x$ and $V_y$ are isomorphic to $K_t$
and $x_iy_i \in E(H)$.
Define a graph $G_{/H/}$ by
$V(G_{/H/})=V(G)$ and $E(G_{/H/})=E(G)
\cup \{x_1y_2, x_2y_3, \ldots, x_{t-1}y_t, x_ty_1 \}$.

\begin{figure}[h]
\psfrag{K}{$K_t$}
\psfrag{G}{$G$}
\psfrag{H}{$H$}
\psfrag{G'}{$G_{/H/}$}
\begin{center}
\includegraphics[scale=0.7]{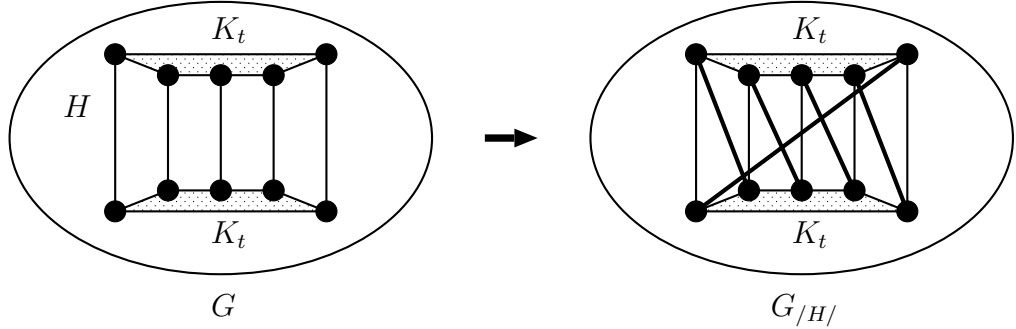}
\caption{Breaking prisms in a graph}
\label{fig-3-3}
\end{center}
\end{figure}

\begin{Lem}\label{bkprism}
Let $G$ be a graph satisfying the condition {\rm (E1)}.
Suppose that $G$ has
an induced subgraph $H$ isomorphic to
$K_t \Box K_2$ for $t \geq 3$.
Then the following hold:
\begin{itemize}
\item[{\rm (1)}]
$G_{/H/}$ also satisfies the condition {\rm (E1)}.
\item[{\rm (2)}]
If the number of holes of $G$ is $h$,
then that of $G_{/H/}$ is at most $h-t$.
\end{itemize}
\end{Lem}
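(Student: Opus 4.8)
The plan is to add the $t$ new edges one at a time and apply Lemma~\ref{lem:addingdiagonaledge} at each step. Write $V(H)=V_x\cup V_y=\{x_1,\dots,x_t\}\cup\{y_1,\dots,y_t\}$ as in the definition of $G_{/H/}$, and set $e_k:=x_ky_{k+1}$ for $k=1,\dots,t$, with indices taken modulo $t$, so that $\{e_1,\dots,e_t\}$ is exactly the set of edges added to form $G_{/H/}$ (here $e_t=x_ty_1$). For each $k$ let $C_k:=x_kx_{k+1}y_{k+1}y_kx_k$. Since $V_x$ and $V_y$ are cliques and the only edges of $H$ between them are the matching edges $x_iy_i$, the four-cycle $C_k$ is an induced cycle of $H$; as $H$ is an induced subgraph of $G$, each $C_k$ is a hole of $G$, and $e_k$ joins the two opposite (hence non-adjacent) vertices $x_k$ and $y_{k+1}$ of $C_k$. (Note that $H$ already contains $\binom{t}{2}\ge t$ holes, so the bound $h\ge t$ needed for (2) to be meaningful holds automatically.)

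First I would build the chain $G=G^{(0)},G^{(1)},\dots,G^{(t)}=G_{/H/}$ with $G^{(k)}:=G^{(k-1)}+e_k$, and prove by induction on $k$ that $G^{(k)}$ satisfies (E1) and that adding $e_k$ creates no new hole. Assume $G^{(k-1)}$ satisfies (E1). The crucial observation is that $C_k$ is still a length-$4$ hole of $G^{(k-1)}$: its only possible chords are $x_ky_{k+1}=e_k$ and $x_{k+1}y_k$, and neither lies in $G^{(k-1)}$, because $e_k$ is inserted only at step $k$, while $x_{k+1}y_k$ is never one of the edges $e_1,\dots,e_t$ (an added edge has the form $x_jy_{j+1}$, and $x_{k+1}y_k=x_jy_{j+1}$ would force $j=k+1$ and $j+1=k$, impossible for $t\ge 3$). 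Hence Lemma~\ref{lem:addingdiagonaledge}, applied to $G^{(k-1)}$ and the hole $C_k$, shows that passing to $G^{(k)}$ creates no new hole. Consequently every hole of $G^{(k)}$ is a hole of $G^{(k-1)}$; since a surviving hole is the same cycle in both graphs, all pairwise edge-intersections are unchanged, so (E1) is inherited by $G^{(k)}$. Taking $k=t$ yields statement (1), and in particular no new hole is created in passing from $G$ to $G_{/H/}$.

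For statement (2) I would track which holes are lost. By the previous paragraph every hole of $G_{/H/}$ is already a hole of $G$, so it suffices to exhibit $t$ distinct holes of $G$ that fail to be holes of $G_{/H/}$. The natural candidates are $C_1,\dots,C_t$: each $C_k$ is a hole of $G$, but in $G_{/H/}$ it acquires the chord $e_k=x_ky_{k+1}$, so $C_k$ is not a hole of $G_{/H/}$. These $t$ cycles are pairwise distinct, since their vertex sets $\{x_k,x_{k+1},y_k,y_{k+1}\}$ are pairwise distinct for $t\ge 3$. Therefore the number of holes drops by at least $t$, giving at most $h-t$ holes in $G_{/H/}$, which is (2).

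The part demanding the most care is the inductive step: one must confirm that at the moment $e_k$ is inserted the four-cycle $C_k$ is genuinely a chordless hole of the partially modified graph $G^{(k-1)}$. This reduces to checking that none of the earlier inserts $e_1,\dots,e_{k-1}$ is a chord of $C_k$ and that the opposite diagonal $x_{k+1}y_k$ is never inserted at all---a short index computation that relies on $t\ge 3$. Once this is in place, the repeated application of Lemma~\ref{lem:addingdiagonaledge} together with the ``no new hole'' conclusion reduces both the preservation of (E1) and the hole-count bound to routine bookkeeping.
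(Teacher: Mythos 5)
Your proof is correct and follows essentially the same route as the paper: both rest on Lemma~\ref{lem:addingdiagonaledge} to show no new hole is created and on the observation that each of the $t$ induced $4$-cycles $x_kx_{k+1}y_{k+1}y_kx_k$ acquires a chord and is destroyed. You are in fact more careful than the paper, which applies the single-edge lemma to all $t$ added edges at once; your one-edge-at-a-time induction, with the check that $C_k$ remains chordless in $G^{(k-1)}$ because $x_{k+1}y_k$ is never among the inserted edges for $t\ge 3$, supplies a detail the paper leaves implicit.
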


\begin{proof} 
By Lemma~\ref{lem:addingdiagonaledge}, $G_{/H/}$ contains 
no new hole other than the ones in $G$. 
Moreover, adding an edge between two non-adjacent vertices 
of a hole of length $4$ in $H$ 
breaks the hole. Thus at least $t$ holes of $G$ are broken 
in $G_{/H/}$. Hence (1) and (2) hold. 
\end{proof}

%%%%%%%%%%%%%%%%%%%%%%%%%%%%%%%%%%%%%%%%%%%%%%%%%%%%%%%%%%%%%%%%%%%%%%%%
\section{Proof of Theorem 1.7}   %\ref{thm:main}}
%%%%%%%%%%%%%%%%%%%%%%%%%%%%%%%%%%%%%%%%%%%%%%%%%%%%%%%%%%%%%%%%%%%%%%%%
\subsection{Outline of the proof}
%%%%%%%%%%%%%%%%%%%%%%%%%%%%%%%%%%%%%%%%%%%%%%%%%%%%%%%%%%%%%%%%%%%%%%%

Let $G$ be a graph satisfying the condition {\rm (E1)}.
Then exactly one of the following three cases happens: \\
{\rm (Case A)}:
$G$ is prism-free and $K_{2,2,2}$-free. \\
{\rm (Case B)}:
$G$ is prism-free and 
$G$ has an induced subgraph $K$ isomorphic to $K_{2,2,2}$. \\
{\rm (Case C)}:
$G$ has an induced subgraph $Y$ isomorphic to a prism. \\

Suppose that $G$ has exactly $h$ holes $C_1$, $C_2$, \ldots, $C_h$.
For each $t \in [h]:=\{1, \ldots, h \}$, we let
\[
C_t= v_{t,0} v_{t,1} \ldots v_{t,m_t -1} v_{t,0},
\]
where $m_t$ is the length of the hole $C_t$.
For $t \in [h]$ and $i \in \{0, \ldots, m_t-1\}$,
let
$S_{t,i}$ be the set of vertices each of which is 
an internal vertex of a $C_t$-avoiding walk from $v_{t,i}$ to $v_{t,i+1}$, i.e., 
\begin{equation}
S_{t,i} := \bigcup_{W \in \cW_{t,i}} V(W)  
\setminus \{ v_{t,i}, v_{t,i+1} \},  
\end{equation}
where $\cW_{t,i}$ denotes the set of all $C_t$-avoiding $(v_{t,i},v_{t,i+1})$-walks 
in $G$. 

We will prove Theorem \ref{thm:main} by induction on the number of holes 
by taking the following steps: \\
{\bf Step 1:}  
We prove (Case A) by using
the operation ``Deleting an edge from a graph". \\
{\bf Step 2:} 
(Case B) is reduced to (Case A) by
using the operation ``Adding an edge to a graph". \\
{\bf Step 3:} 
(Case C) is reduced to (Case B) by using the operation
``Breaking prisms in a graph".

%%%%%%%%%%%%%%%%%%%%%%%%%%%%%%%%%%%%%%%%%%%%%%%%%%%%%%%%%%%%%%%%%%%%%%%%%%%%
\subsection{Proof for (Case A)}
%%%%%%%%%%%%%%%%%%%%%%%%%%%%%%%%%%%%%%%%%%%%%%%%%%%%%%%%%%%%%%%%%%%%%%%%%%%

Consider (Case A).
If there are no holes of length $4$ in $G$,
then all the holes are independent.
Therefore it holds that $k(G) \leq h(G) + 1$
by Theorem \ref{thm:indep}.
Suppose that there is a hole of length $4$ in $G$.
Since $G$ is prims-free and $K_{2,2,2}$-free, 
the condition (a) in Theorem \ref{thm:strXC} and
the condition (a) in Theorem \ref{thm:str}
hold for each of the holes of length $4$ in $G$.

%%%%%%%%%%%%%%%%%%%%%%%%%%%%%%%%%%%%%%%%%%%%%%%%%%%%%%%%%%%%%%%%%%%%%%%%%%%

\begin{Thm} \label{thm:mainCaseA}
Let $G$ be a prism-free $K_{2,2,2}$-free graph satisfying the condition
{\rm (E1)} with exactly $h$ holes and let $Q$ be a clique of $G$ containing 
an edge of a hole.
Then there exists an acyclic digraph $D$ such that
$C(D)=G \cup I_{h+1}$ and the vertices of $Q$ have no in-neighbors in $D$. 
Consequently, the competition number of $G$ is at most $h+1$.
\end{Thm}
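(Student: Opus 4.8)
The plan is to prove, by induction on the number of holes $h$, a statement slightly stronger than the literal one: for every prism-free, $K_{2,2,2}$-free graph $G$ satisfying (E1) with $h$ holes and \emph{every} clique $Q$ of $G$, there is an acyclic digraph $D$ with $C(D)=G\cup I_{h+1}$ in which no vertex of $Q$ receives an arc. Allowing $Q$ to be an arbitrary clique (not only one meeting a hole) is what lets the induction descend past the last hole into a chordal graph. For the base case $h=0$ the graph is chordal, so by Theorem~\ref{roberts} it has competition number at most $1$; I would realize this by a perfect elimination ordering chosen so that the vertices of $Q$ come last, which makes them sources and forces every prey to precede them. The degenerate branch in which $G$ has no hole of length $4$ is already covered by Theorem~\ref{thm:indep}, so the genuine work is the inductive step when $G$ has a hole of length $4$.

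For the inductive step I would fix a hole $C=v_0v_1v_2v_3v_0$ of length $4$. Since $G$ is prism-free and $K_{2,2,2}$-free, Theorem~\ref{thm:strthm} puts us in case (A): $X_C$ is a clique and there is no $C$-avoiding path between two non-adjacent vertices of $C$. The aim of this step is to locate an edge $uv$ of some hole such that \emph{no} $C$-avoiding $(u,v)$-path exists, for then Lemma~\ref{noavoid} applies and tells me that $G-uv$ is again prism-free, $K_{2,2,2}$-free, satisfies (E1), and has at most $h-1$ holes. That one deletion is the engine that drives the hole count down by at least one while preserving every hypothesis.

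Granting such an edge, I would apply the strengthened induction hypothesis to $G-uv$ with a compatibly chosen clique $Q'$ — taking $Q'=Q$, and choosing the deleted edge so that it is not an edge of $Q$ and, when $h\ge 2$, lies on a hole different from one carrying an edge of $Q$ (when $h=1$ the reduction lands in the chordal base case, where the arbitrary-clique form is exactly what is needed). This yields an acyclic $D'$ with $C(D')=(G-uv)\cup I_{h'+1}$ in which $Q'$ is a set of sources. I then extend $D'$ to $D$ by introducing one new vertex $a$ together with the two arcs $u\to a$ and $v\to a$. Because $a$ is brand new, only $u$ and $v$ point at it, so the single new edge created in the competition graph is exactly $uv$, while $a$ acquires no out-arc and hence stays isolated; adding the remaining $h-h'-1\ge 0$ vertices as isolated sinks brings the total to exactly $h+1$. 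Acyclicity is preserved since $a$ is a sink, and — this is the point of the strengthening — \emph{no} arc is added into any vertex of $G$, so $Q$ remains a set of sources in $D$. Thus $C(D)=G\cup I_{h+1}$ with $Q$ receiving no arcs, which in particular gives $k(G)\le h+1$.

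The hard part will be the structural claim feeding the deletion: that one can always find a hole edge $uv$ with no $C$-avoiding $(u,v)$-path. Two obstructions must be cleared. First, a simplicial vertex hanging on a hole edge (adjacent to exactly two consecutive hole vertices and lying on no hole, the situation of Lemma~\ref{avoidingvertex}) produces a length-$2$ avoiding path and would block deletion of that edge; such chordal appendages I would absorb into the chordal part $G_2$ of Theorem~\ref{chordalpart}, so that they are paid for by the single global extra isolated vertex rather than by a fresh prey. Second, a $C$-avoiding $(u,v)$-path of length at least $3$ forces a second hole sharing the edge $uv$ with $C$, which by (E1) can share only that one edge; I expect to exploit this, together with Lemma~\ref{wheel} (to rule out a common neighbor landing in $X_C$) and the separation behavior of Lemma~\ref{vertexcut}, in an extremal argument that takes a shortest avoiding path over all holes and all their edges and derives either a strictly shorter avoiding path or a forbidden prism/$K_{2,2,2}$ configuration. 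The remaining difficulty is pure bookkeeping: keeping the ``one extra prey per hole'' accounting exactly tight so the count stays $h+1$, and threading the source clique $Q$ through the recursion so that the hole edge it is required to meet survives until the chordal base case takes over.
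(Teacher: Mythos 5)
Your architecture diverges from the paper's at the decisive point, and the divergence opens a genuine gap. Your engine is: find an edge $uv$ of some hole $C$ such that $G$ has \emph{no} $C$-avoiding $(u,v)$-path, delete it via Lemma~\ref{noavoid}, recurse on $G-uv$, and restore $uv$ with one fresh prey vertex. But such an edge need not exist. Let $C=v_0v_1v_2v_3v_0$ be the unique hole of a graph obtained by attaching, for each $i$, a vertex $w_i$ adjacent exactly to $v_i$ and $v_{i+1}$. This graph is prism-free, $K_{2,2,2}$-free and satisfies (E1), yet $X_C=\emptyset$ and every edge $v_iv_{i+1}$ of the only hole carries the $C$-avoiding path $v_iw_iv_{i+1}$, so Lemma~\ref{noavoid} is never applicable to $G$ itself. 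Your proposed repair --- absorbing such obstructions as ``chordal appendages'' via Theorem~\ref{chordalpart} --- does not close the gap: that theorem costs an extra isolated vertex each time it is invoked (and appendages hanging off different hole edges do not meet the remaining graph in a single clique, so they cannot be bundled into one chordal $G_2$); it gives no control over which vertices end up without in-neighbors, which destroys the very source-clique strengthening your induction relies on; and the internal vertices of avoiding paths need not form chordal appendages at all, since they may themselves lie on other holes. The further assertion that a length-$\ge 3$ avoiding path ``forces a second hole sharing the edge $uv$ with $C$'' is unsubstantiated, and the extremal argument you defer to is exactly where the difficulty of the theorem lives; as stated, its goal is unattainable.

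The paper's proof avoids this entirely. It chooses a hole edge $\{v_{t,j},v_{t,j+1}\}$ so that this pair together with the set $S_{t,j}$ of internal vertices of all $C_t$-avoiding $(v_{t,j},v_{t,j+1})$-walks is disjoint from $Q$ (possible by Theorem~\ref{thm:strthm}), shows that $X_{C_t}\cup\{v_{t,j},v_{t,j+1}\}$ is a clique vertex cut separating $S_{t,j}$ from the rest, and splits $G$ into $G_1=G[V(G)-S_{t,j}]$ and $G_2=G[X_{C_t}\cup\{v_{t,j},v_{t,j+1}\}\cup S_{t,j}]$. Only \emph{inside $G_1$}, where the avoiding walks have been removed, is the edge $v_{t,j}v_{t,j+1}$ deleted and Lemma~\ref{noavoid} applied; the induction is then run on $G_1-v_{t,j}v_{t,j+1}$ with source clique $Q$ and on $G_2$ with the cut clique as source clique, the two digraphs are glued along the cut, and the deleted edge is recovered because it lies in $G_2$. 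Your treatment of the chordal base case (a perfect elimination ordering ending in $Q$) and the device of restoring a single edge with one new prey are sound, but without the vertex-cut decomposition the induction cannot get started.
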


\begin{proof}
We shall prove the theorem by induction on $h$.
The competition number of $G$ is at most $1$ if $h = 0$ 
by Theorem~\ref{roberts}. Since there is no hole, 
there is no clique containing an edge of a hole and so the theorem is true 
if $h=0$.

We assume that $h \geq 1$ and the theorem is true for any prism-free 
$K_{2,2,2}$-free graph satisfying the condition (E1) with less than $h$ holes. 
Suppose that $(\{v_{t,i}, v_{t,i+1}\} \cup S_{t,i}) \cap Q \neq \emptyset$
for some $t \in [h]$ and $i \in \{0, \ldots, m_{t}-1\}$.
Take $x \in (\{v_{t,i}, v_{t,i+1}\} \cup S_{t,i}) \cap Q$. 
If $x=v_{t,i}$, then $v_{t,j} \not\in Q$ for any $j \neq i-1, i+1$. 
Since $m_t \ge 4$, $i+3 \not\equiv i \pmod{m_t}$.
Suppose that $x' \in (\{v_{t,i+2}, v_{t,i+3}\} \cup S_{t,i+2}) \cap Q $. 
Then $x' \in S_{t,i+2} \cap Q $. Since $Q$ is a clique, 
$x'$ is adjacent to $x$. 
By the definitions of $S_{t,i}$ and $S_{t,i+2}$, 
there exists a $C_t$-avoiding $(v_{t,i},{v_{t,i+2}})$-walk, 
which contradicts Theorem~\ref{thm:strthm}. 
Thus $(\{v_{t,i+2}, v_{t,i+3}\} \cup S_{t,i+2}) \cap Q =\emptyset$. 
If $x=v_{t,i+1}$, we can show that 
$(\{v_{t,i-2}, v_{t,i-1}\} \cup S_{t,i-2}) \cap Q =\emptyset$ 
by a similar argument. 
If $x \in S_{t,i} \cap Q$ and 
$x' \in (\{v_{t,i-2}, v_{t,i-1}\} \cup S_{t,i-2}) \cap Q$, 
then $x$ and $x'$ are adjacent and there exists 
a $C_t$-avoiding $(v_{t,i-2},v_i)$-walk. 
Now Theorem~\ref{thm:strthm} is violated. 
Thus $ (\{v_{t,i-2}, v_{t,i-1}\} \cup S_{t,i-2}) \cap Q=\emptyset$. 
We have just shown that  there exists $j \in \{0, \ldots, m_t-1\}$ 
such that 
\[
(\{v_{t,j}, v_{t,j+1}\} \cup S_{t,j}) \cap Q = \emptyset.
\] 
We claim that no vertex of $S_{t,j}$ is adjacent to a vertex of 
$V(G)- (X_t \cup V(C_t) \cup S_{t,j})$. 
Suppose otherwise, there is a $C_t$-avoiding $(v_{t,j},v_{t,j+1})$-walk 
$W$ that contains an internal vertex $x$ adjacent 
to a vertex $y \in V(G)- (X_t \cup V(C_t) \cup S_{t,j})$.
The walk $W'$ obtained  by
replacing the term $x$ with $xyx$ in the sequence of $W$ 
is a $C_t$-avoiding $(v_{t,j}, v_{t,j+1})$-walk, 
which contradicts the assumption that $y \not\in S_{t,j}$. 
On the other hand, there is no $C_t$-avoiding path connecting 
two non-adjacent vertices of $C_t$ by Theorem~\ref{thm:strthm}, 
and so no vertex of $S_{t,j}$ is adjacent to a 
vertex of $V(C_t)- \{v_{t,j}, v_{t,j+1}\}$. 
Hence, $X_t \cup \{v_{t,j}, v_{t,j+1}\}$ 
is a vertex cut of $G$ and no vertex in $S_{t,j}$ 
belongs to the component that contains 
$V(C_t) - \{v_{t,j}, v_{t,j+1}\}$. 

Consider 
the subgraph $G_1$ of the graph $G$
induced by $V(G) - S_{t,j}$ and
the subgraph $G_2$ of $G$ induced by
$X_t \cup \{v_{t,j}, v_{t, j+1}\} \cup S_{t,j}$.
Since $V(G_1) \cap V(G_2) = X_t \cup \{v_{t,j}, v_{t,j+1}\}$
is a vertex cut of $G$ which is a clique, the vertex set of a hole is 
contained in either $V(G_1) \setminus V(G_2)$ or 
$V(G_2) \setminus V(G_1)$. Thus, if $h_1$ is the number of holes of $G_1$, 
then $h_2 := h - h_1$ 
is the number of holes of $G_2$.
It is obvious that $E(G_1) \cup E(G_2)=E(G)$ and that both $G_1$ 
and $G_2$ are prism-free, $K_{2,2,2}$-free, and satisfy the condition (E1). 

Since the hole $C_t$ is not in $G_2$, we have $h_2 < h$.
By the induction hypothesis, there exists an acyclic digraph $D_2$ 
such that $C(D_2) = G_2 \cup I_{h_2 +1}$
and the vertices of $X_t \cup \{v_{t,j}, v_{t,j+1} \}$
have only outgoing arcs in $D_2$.
Notice that $C_t$ is a hole in $G_1$ which has no
$C_t$-avoiding walk from $v_{t,j}$ to $v_{t,j+1}$.
By Lemma \ref{noavoid}, $G_1 - v_{t,j} v_{t,j+1}$
has exactly $h_1 -1$ holes and satisfy the condition (E1).
By the choice of $j$, $Q$ is a clique in
$G_1 - v_{t,j} v_{t,j+1}$ and, by the induction hypothesis,
there exists an acyclic digraph $D_1$ such that
$C(D_1)=(G_1 - v_{t,j} v_{t,j+1}) \cup I_{h_1}$
and the vertices of $Q$ have only outgoing arcs in $D_1$.
We now define a digraph $D$ by $V(D) = V(D_1) \cup V(D_2)$
and $A(D) = A(D_1) \cup A(D_2)$.
It is easy to check that $D$ is an acyclic digraph
with the vertices of $Q$ having only outgoing
arcs in $D$. 
Since $V(G_1) \cap V(G_2)= X_t \cup \{v_{t,j},v_{t,j+1}\}$ 
the vertices of which have only outgoing arcs in $D_2$, no new edge 
is added to $C(D)$ other than the ones in $G_1$ or $G_2$. 
Thus, $E(C(D)) = (E(G_1)\setminus \{v_{t,j}v_{t,j+1}\}) 
\cup E(G_2)$ $=E(G_1) \cup E(G_2)= E(G)$. 
Hence $C(D)=G \cup I_h$. 
Consequently, $k(G) \leq h + 1$. 
\end{proof}

%%%%%%%%%%%%%%%%%%%%%%%%%%%%%%%%%%%%%%%%%%%%%%%%%%%%%%%%%%%%%%%%%%%%%%%%%%%%%
\subsection{Reducing (Case B) to (Case A)}
%%%%%%%%%%%%%%%%%%%%%%%%%%%%%%%%%%%%%%%%%%%%%%%%%%%%%%%%%%%%%%%%%%%%%%%%%%%%%

\begin{Thm}\label{thm:mainCaseB}
Let $G$ be a graph satisfying the condition {\rm (E1)}
with exactly $h$ holes.
If $G$ is prism-free, then the competition number of
$G$ is at most $h+1$.
\end{Thm}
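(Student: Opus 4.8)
The plan is to induct on the number $h$ of holes of $G$. If $G$ is $K_{2,2,2}$-free, then $G$ is prism-free and $K_{2,2,2}$-free, so Theorem \ref{thm:mainCaseA} applies directly and gives $k(G)\le h+1$ (when $h\ge 1$ one may take $Q$ to be an edge of some hole, and the case $h=0$ is Theorem \ref{roberts}). Hence I may assume $G$ contains an induced $K_{2,2,2}$, and I fix the largest $m$ (necessarily $m\ge 3$) for which $G$ has an induced subgraph $K\cong K_2^m$; write its parts as $\{a_1,b_1\},\dots,\{a_m,b_m\}$ and set $u:=a_1$, $v:=b_1$.

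\emph{The reduction step.} Put $G':=G+uv$. By Lemma \ref{lem:K+e}, $G'$ again satisfies {\rm (E1)}, is prism-free, and has at most $h-2$ holes; since it has strictly fewer holes than $G$, the induction hypothesis yields an acyclic digraph $D'$ with $C(D')=G'\cup I_{k'}$, where $k'=k(G')\le (h-2)+1=h-1$. It therefore suffices to prove $k(G)\le k(G')+2$, for then $k(G)\le (h-1)+2=h+1$. Equivalently, I must modify $D'$ into an acyclic digraph $D$ whose competition graph is $G\cup I_{k'+2}$: I must destroy the single competition edge $uv$ at the cost of two extra isolated vertices.

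\emph{The surgery.} The only defect of $D'$, viewed against $G$, is that $u$ and $v$ share prey. Let $W$ be the set of common out-neighbours of $u$ and $v$ in $D'$. For every $w\in W$ the in-neighbourhood $N^{-}_{D'}(w)$ is a clique of $G'$ containing both $u$ and $v$, so each vertex of $N^{-}_{D'}(w)\setminus\{u,v\}$ lies in $N_G(u)\cap N_G(v)$; by Lemma \ref{multi} this common neighbourhood equals $X_K\cup(V(K)\setminus\{u,v\})$, and $X_K$ is a clique. I delete the arc $(v,w)$ for each $w\in W$; this removes the edge $uv$ from the competition graph and, among the vertices of $G$, can only destroy competition edges of the form $vc$ with $c\in X_K\cup(V(K)\setminus\{u,v\})$. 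To restore exactly these, I introduce two new sink vertices $z_1,z_2$, place them last in a topological order of $D'$, and add all arcs into $z_1$ from $Q_a:=\{v\}\cup\{a_2,\dots,a_m\}\cup X_K$ and all arcs into $z_2$ from $Q_b:=\{v\}\cup\{b_2,\dots,b_m\}\cup X_K$.

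\emph{Why this works, and the main obstacle.} Both $Q_a$ and $Q_b$ are cliques of $G$: each of $\{a_2,\dots,a_m\}$ and $\{b_2,\dots,b_m\}$ is a clique of $K$, $X_K$ is a clique adjacent to all of $K$, and $v$ is adjacent to every $a_j,b_j$ with $j\ge 2$ and to all of $X_K$. Hence the two new cliques create no spurious edges, and they jointly cover $\{\,vc : c\in X_K\cup(V(K)\setminus\{u,v\})\,\}$, since $Q_a\cup Q_b=\{v\}\cup X_K\cup(V(K)\setminus\{u,v\})$; this is precisely the set of competitions lost when the arcs $(v,w)$ are deleted. The new vertices $z_1,z_2$ have no out-arcs, so they are isolated in $C(D)$, and no arc into $z_1$ or $z_2$ issues from $u$, so $u$ and $v$ no longer share prey. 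A routine check of the surviving edges (those covered by $N^{-}_{D'}(w)$ after discarding $v$, and those covered by untouched in-neighbourhoods) then gives $C(D)=G\cup I_{k'+2}$. The one genuinely nontrivial point, which is the heart of the argument, is that \emph{two} sink vertices suffice no matter how large $m$ is: this amounts to covering the vertices of $K_2^{m-1}=K[V(K)\setminus\{u,v\}]$ by two cliques, which holds because the complement of $K_2^{m-1}$ is a perfect matching and hence $2$-colourable. Everything else is bookkeeping assembling $k(G)\le k(G')+2\le h+1$.
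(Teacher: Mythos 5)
Your proposal is correct and follows essentially the same route as the paper: induction on $h$, reduction to the $K_{2,2,2}$-free case via Theorem \ref{thm:mainCaseA}, adding the edge $uv$ inside a maximum induced $K_2^m$ (Lemma \ref{lem:K+e}), and then repairing the digraph by deleting the arcs from $v$ to the common prey of $u$ and $v$ and adding two new sink vertices fed by two cliques covering $N_G[v]\cap(X_K\cup V(K))$. Your only addition is to make those two cliques explicit ($\{v\}\cup\{a_2,\dots,a_m\}\cup X_K$ and $\{v\}\cup\{b_2,\dots,b_m\}\cup X_K$), which the paper leaves implicit; the argument is otherwise the same.
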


\begin{proof}
By induction on $h$. 
If $h=0$, then the theorem follows from 
Theorem~\ref{roberts}. 
Assume that the theorem is true 
for any prism-free graph satisfying the condition (E1) 
with less than $h$ holes. 
Let $m$ be the maximum integer such that $G$ 
contains an induced subgraph $K$ isomorphic to $K_2^m$. 
If $m \leq 2$, then $G$ is $K_{2,2,2}$-free. 
By Theorem~\ref{thm:mainCaseA}, the theorem holds. 
Suppose that $m \ge 3$. 
Let $u$ and $v$ be two non-adjacent vertices of $K$.
By Lemma \ref{lem:K+e}, 
the graph $G':=G+uv$ 
is a prism-free graph satisfying the condition {\rm (E1)}
and has at most $h-2$ holes.
Therefore, by induction hypothesis,
there exists an acyclic digraph $D'$ such that
$C(D')=G' \cup I_{h-1}$.

In the following, we shall construct an acyclic digraph $D$
such that $C(D)=G \cup I_{h+1}$ from $D'$. 
We first look at the vertices in 
$N_{D'}^+(u) \cap N_{D'}^+(v)$ 
which play as prey of cliques containing the edge $uv$ in $G'$. 
Let $N_{D'}^+(u) \cap N_{D'}^+(v)=\{w_1, \ldots, w_p\}$
for some integer $p \ge 1$. Let $H_i$ be the subgraph of $G$ 
induced by $N_{D'}^-(w_i)$.
In $G$, the edges of $H_i$ are covered by exactly two cliques
$N_{D'}^-(w_i) \setminus \{u\}$ and $N_{D'}^-(w_i) \setminus \{v\}$ 
unless $N_{D'}^-(w_i)=\{u,v\}$. 
Furthermore, 
since $w_i$ is a common out-neighbor of $u$ and $v$,
\[
\bigcup_{i=1}^p N_{D'}^-(w_i)
\subseteq (N_{G}[u] \cap N_G[v])
\subseteq X_K \cup V(K),
\]
where 
$X_K$ is defined by (\ref{eq:XK})
and the last inclusion follows from Lemma~\ref{multi} (2).
Thus
\[
N_G[v] \cap \bigcup_{i=1}^p N_{D'}^-(w_i)
\subseteq N_G[v] \cap (X_K \cup V(K)).
\]
The vertices in  $N_G[v] \cap (X_K \cup V(K))$ are
covered by exactly two cliques in $G$.
We denote those cliques by $Z_1$ and $Z_2$.
We define a digraph $D$ as follows:
\begin{eqnarray*}
V(D) &=& V(D') \cup \{ z_1, z_2 \}; \\
A(D) &=& 
\left(
A(D') \setminus \bigcup_{i=1}^p
\{(v,w_i)\} 
\right)
\cup \{(x,z_1) \mid x \in Z_1\}
\cup \{(x,z_2) \mid x \in Z_2\}. 
\end{eqnarray*}
Then it is obvious that $D$ is acyclic 
and $E(C(D)) \subset E(G)$. 
By removing the arcs in 
$\bigcup_{i=1}^p \{(v,w_i)\}$ from $D'$, 
the competition graph of the new digraph loses the edges joining $v$ 
and the vertices in $\bigcup_{i=1}^p N_{D'}^-(w_i)$. 
Those edges are contained in the subgraph induced by 
$N_G[v] \cap (X_K \cup V(K))$ as we argued above. 
Thus those edges are contained in the cliques formed by $Z_1$ or $Z_2$. 
Hence $C(D)=G \cup I_{h+1}$ and so $k(G) \le h+1$. 
\end{proof}

%%%%%%%%%%%%%%%%%%%%%%%%%%%%%%%%%%%%%%%%%%%%%%%%%%%%%%%%%%%%%%%%%%%%%%%%%%%%%
\subsection{Reducing (Case C) to (Case B)}
%%%%%%%%%%%%%%%%%%%%%%%%%%%%%%%%%%%%%%%%%%%%%%%%%%%%%%%%%%%%%%%%%%%%%%%%%%%%%

Now we complete the proof of Theorem \ref{thm:main}.

\begin{proof}[Proof of Theorem \ref{thm:main}]
By induction on $h$.  
If $h=0$, then the theorem follows from Theorem~\ref{roberts}. 
Assume that the theorem is true for any graph satisfying the condition (E1) 
with less than $h$ holes. 
Let $t$ be the maximum integer such that $G$ 
contains an induced subgraph $H$ isomorphic to $K_t \Box K_2$. 
If $t \leq 2$, then $G$ is prism-free. 
By Theorem~\ref{thm:mainCaseB}, the theorem holds. 
Suppose that $t \geq 3$. 
Consider the graph $G_{/H/}$.
Then, by Lemma~\ref{bkprism}, $G_{/H/}$ satisfies the condition (E1) 
and the number of holes in $G_{/H/}$ is $h-t$ which is less than $h$.
By the induction hypothesis, there exists an acyclic digraph $D'$ 
such that $C(D')=G_{/H/} \cup I_{h-t+1}$. Take $i \in \{1, \ldots, t\}$ 
and  $w \in N_{D'}^+(x_i) \cap N_{D'}^+(y_{i+1})$. 
Then $N_{D'}^-(w) \subset  X_{C_i} \cup \{x_i, x_{i+1},y_{i+1}\}$ 
or $N_{D'}^-(w) \subset  X_{C_i} \cup \{x_i, y_i, y_{i+1}\}$ 
where $C_i:=x_iy_iy_{i+1}x_{i+1}x_i$ 
(identify $x_{t+1}$ and $y_{t+1}$ with $x_1$ and $y_1$, respectively). 
By Theorem~\ref{thm:strthm}, $X_{C_i}$ is a clique in $G$ 
and so $N_{D'}^-(w) \setminus \{x_i\}$ is a clique in $G$. 
Now we define a digraph $D$ by 
\begin{eqnarray*}
V(D) &=& V(D') \cup \{z_1, \ldots, z_t \}, \\
A(D) &=& 
\left(
A(D') \setminus \bigcup_{i=1}^{t}
\{(x_i,w) \mid w \in N^+_{D'}(x_i) \cap N^+_{D'}(y_{i+1})\}
\right) \\
&& 
\cup \bigcup_{i=1}^{t} \{(x, z_i) \mid x \in \{x_i,x_{i+1} \} \cup X_{C_i}\}. 
\end{eqnarray*}
Obviously $D$ is acyclic. Note that 
\[
N_D^-(w) = \left\{
\begin{array}{ll} 
N_{D'}^-(w) \setminus \{x_i\}  
& \mbox{ if } w \in N^+_{D'}(x_i) \cap N^+_{D'}(y_{i+1}) 
\mbox{ for some } i \in \{1, \ldots, t\}; \\
N_{D'}^-(w) & \mbox{ otherwise.}\end{array}\right.\]
Also notice that deleting the arcs in $\bigcup_{i=1}^{t}
\{(x_i,w) \mid
w \in N^+_{D'}(x_i) \cap N^+_{D'}(y_{i+1})\}$ from $D'$ 
may remove edges only in the clique $\{x_i,x_{i+1} \} \cup X_{C_i}$ 
for some $i \in \{1, \ldots, t\}$ from $C(D')$. 
From these observations, we can conclude that $C(D)=G \cup I_{h+1}$. 
Hence $k(G) \leq h+1$.
\end{proof}

\begin{Cor}
Let $G$ be a graph with exactly $h$ holes satisfying the following property:
\begin{itemize}
\item{}
For any two distinct holes $C$ and $C'$, $|V(C) \cap V(C')| \leq 2$.
\end{itemize}
Then the competition number of $G$ is at most $h+1$.
\end{Cor}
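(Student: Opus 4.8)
The plan is to deduce this corollary immediately from the main result, Theorem~\ref{thm:main}, by checking that the stated hypothesis is in fact a (formally stronger) special case of condition (E1). That is, I would show that if any two distinct holes of $G$ share at most two vertices, then any two distinct holes share at most one edge, and then simply invoke Theorem~\ref{thm:main} to conclude $k(G) \le h+1$. This mirrors exactly the argument used earlier to prove that condition (LC) implies (E1).

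The key step is a one-line counting observation about simple graphs. Let $C$ and $C'$ be two distinct holes, and let $e = xy$ be any edge lying in both $E(C)$ and $E(C')$. Then both endpoints $x$ and $y$ belong to $V(C) \cap V(C')$, so every common edge is determined by a pair of common vertices. If $|V(C) \cap V(C')| \le 1$, there is no such pair and hence no common edge. If $|V(C) \cap V(C')| = 2$, say $V(C) \cap V(C') = \{u,v\}$, then the only candidate for a common edge is $uv$; since $G$ is simple, there is at most one such edge. In either case $|E(C) \cap E(C')| \le 1$, which is precisely condition (E1). Applying Theorem~\ref{thm:main} then yields $k(G) \le h+1$.

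I do not expect any real obstacle here: the entire content of the corollary is carried by Theorem~\ref{thm:main}, and the reduction is essentially trivial. The only point that requires a moment of care is the observation that a shared edge forces both of its endpoints into the shared vertex set, so that bounding shared vertices automatically bounds shared edges; this relies on $G$ being a simple graph, which is assumed throughout the paper. Consequently the proof will be short, amounting to the verification of the implication (vertex condition)$\implies$(E1) followed by a citation of the main theorem.
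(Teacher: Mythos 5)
Your proposal is correct and follows essentially the same route as the paper: the corollary is deduced immediately from Theorem~\ref{thm:main} once one observes that sharing at most two vertices forces sharing at most one edge (since each common edge contributes both endpoints to the common vertex set and the graph is simple). If anything, your explicit verification of this implication is cleaner than the paper's one-line citation, which formally points to Proposition~\ref{prop:E1V2} (the converse direction) rather than to the trivial forward implication actually needed.
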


\begin{proof}
It follows from Theorem \ref{thm:main} and Proposition \ref{prop:E1V2}. 
\end{proof}

%%%%%%%%%%%%%%%%%%%%%%%%%%%%%%%%%%%%%%%%%%%%%%%%%%%%%%%%%%%%%%%%%%%%%%%%%%%%
%%%%%%%%%%%%%%%%%%%%%%%%%%%%%%%%%%%%%%%%%%%%%%%%%%%%%%%%%%%%%%%%%%%%%%%%%%%%

\end{document}